\theoremstyle{definition}
\newtheorem{thm}{Theorem}[section]
\newtheorem{dfn}[thm]{Definition}
\newtheorem{note-dfn}[thm]{Notation-Definition}
\newtheorem{note-prop}[thm]{Notation-Propositoin}
\newtheorem{exam}[thm]{Example}
\newtheorem{prop}[thm]{Proposition}
\newtheorem{lem}[thm]{Lemma}
\newtheorem{rem}[thm]{Remark}
\newtheorem{prop-dfn}[thm]{Proposition-Definition}
\newtheorem{claim}[thm]{Claim}
\newcommand{\Z}{\mathbb{Z}}
\newcommand{\Q}{\mathbb{Q}}
\newcommand{\ZZ}{\widehat{\mathbb{Z}}}
\newcommand{\F}{\mathbb{F}}
\newcommand{\Spec}{\mathrm{Spec}\,}
\newcommand{\sep}{\mathrm{sep}}
\newcommand{\deq}{\overset{\mathrm{def}}{=}}
\title{Criteria for good reduction of hyperbolic polycurves}
\author{Ippei Nagamachi}
\begin{document}

\maketitle

\begin{abstract}
 We give good reduction criteria for hyperbolic polycurves, i.e.,
successive extensions of families of curves, under some assumptions.
These criteria are higher dimensional versions of the good reduction criterion for hyperbolic curves given by Oda and Tamagawa.
\end{abstract}

\tableofcontents

\section{Introduction}
Let $K$ be a discrete valuation field, $O_{K}$ the valuation ring of $K$, $p\,(\geq 0)$ the residual characteristic of $K$, $K^{\rm sep}$ a separable closure of $K$, $G_K$ the absolute Galois group ${\rm Gal}(K^{\rm sep}/K)$ of $K$, and $I_K$ an inertia subgroup of $G_K$.
(Note that $I_K$, as a subgroup of $G_K$, depends on the choice of a prime ideal in the integral closure of $O_K$ in $K^{\rm sep}$ over the maximal ideal of $O_K$, but it is independent of this choice up to conjugation.)
Let $X$ be a proper smooth variety over $K$.
$X$ is said to have good reduction if there exists a proper smooth scheme $\mathfrak{X}$ over $O_{K}$ whose generic fiber is isomorphic to $X$ over $O_{K}$.
(Such a scheme $\mathfrak{X}$ is called a smooth model of $X$.)
In arithmetic geometry, it is an important to know criteria to determine whether $X$ has good reduction.
Various criteria for good reduction in terms of Galois representations have been established for certain classes of varieties.
N\'eron, Ogg, and Shafarevich established a criterion in the case of elliptic curves, and Serre and Tate generalized this criterion to the case of abelian varieties \cite{ST}.
Their criterion claims that an abelian variety has good reduction if and only if the action of $I_{K}$ on the first $l$-adic \'etale cohomology group of $X\times_{\Spec K}\Spec K^{\sep}$ is trivial for some prime number $l \neq p$.

As a non-abelian version of the above result, Oda showed that a proper hyperbolic curve has good reduction if and only if the outer action of $I_{K}$ on the pro-$l$ fundamental group of  $X\times_{\Spec K}\Spec K^{\sep}$ is trivial (cf.\,\cite{Oda1} and \cite{Oda2}).
To state Oda's result precisely, we fix some notations.
For a profinite group $G$ and $p$ as above (resp.\,a prime number $l$), we denote the pro-$p'$ (resp.\,pro-$l$) completion of $G$, which is defined to be the limit of the projective system of quotient groups of $G$ with finite order prime to $p$ (resp.\,with finite $l$-power order) by $G^{p'}$ (resp.\,$G^{l}$).
Here, if $p=0$, the order of every finite group is considered to be prime to $0$.

Suppose that $X$ is a proper hyperbolic curve (i.e., a geometrically connected proper smooth curve of genus $\geq 2$) over $K$. 
Then the pro-$l$ completion $\pi_1(X \times_{\Spec K}\Spec K^{\rm sep}, \overline{t})^{l}$ of the \'etale fundamental group $\pi_1(X \times_{\Spec K}\Spec K^{\rm sep}, \overline{t})$ (with a base point $\overline{t}$) admits a continuous homomorphism 
\begin{equation}
\begin{split}
\rho: G_K \rightarrow &{\rm Out}(\pi_1(X \times_{\Spec K}\Spec K^{\rm sep}, \overline{t})^{l})\\
:=  &{\rm Aut}(\pi_1(X \times_{\Spec K}\Spec K^{\rm sep}, \overline{t})^{l})/ {\rm Inn}(\pi_1(X \times_{\Spec K}\Spec K^{\rm sep}, \overline{t})^{l}).
\end{split}
\end{equation}
We refer to this outer representation as the outer Galois representation associated to $X$.
Here, ${\rm Aut}(\pi_1(X \times_{\Spec K}\Spec K^{\rm sep}, \overline{t})^{l})$ (resp.\,${\rm Inn}(\pi_1(X \times_{\Spec K}\Spec K^{\rm sep}, \overline{t})^{l})$) is the group of continuous automorphisms of the profinite group $\pi_1(X \times_{\Spec K}\Spec K^{\rm sep}, \overline{t})^{l}$ (resp.\,the group of inner automorphisms of the profinite group $\pi_1(X \times_{\Spec K}\Spec K^{\rm sep}, \overline{t})^{l}$).
Oda proved that $X$ has good reduction if and only if the restriction of $\rho$ to ${I_K}$ is trivial.
Tamagawa generalized this criterion to (not necessarily proper) hyperbolic curves \cite{Tama1} (cf.\,Definition \ref{reddef}).

Oda and Tamagawa's criterion can be regarded as a result in anabelian geometry.
Indeed, a hyperbolic curve is a typical example of an anabelian variety, i.e., a variety which is determined by its outer Galois representation
$G_{K} \rightarrow \text{Out}\, \pi_1(X\times_{\Spec K}\Spec K^{\rm sep}, \overline{t})$ (under a suitable assumption on $K$) (cf.\,\cite{Tama1} and \cite{Moch}).
Therefore, it would be natural to expect that we can obtain informations on the reduction of $X$ from the outer Galois representation associated to $X$.

The class of hyperbolic polycurves, that is, 
varieties $X$ which admit a strucure of successive smooth fibrations (called a sequence of parameterizing morphisms (cf.\,Definition \ref{hyperbolicpoly}))
\begin{equation}\label{0}
X = X_n \overset{f_n}{\rightarrow} X_{n-1} \overset{f_{n-1}}{\rightarrow} \cdots 
\overset{f_2}{\rightarrow} X_1 
\overset{f_1}{\rightarrow} \mathrm{Spec}\,K 
\end{equation}
whose fibers are hyperbolic curves, is considered to be anabelian.
Indeed, the Grothendieck conjecture for hyperbolic polycurves of dimension up to $4$ holds under suitable assumptions on $K$ \cite{Moch} \cite{Ho}. 
Moreover, in the case where $X$ is a strongly hyperbolic Artin neighborhood (of any dimension) (cf.\,{\cite[Definition 6.1]{SS}}) and $K$ is finitely generated over $\Q$, the Grothendieck conjecture for such a variety holds \cite{SS}.
Thus, we can expect that there exists a good reduction criterion for hyperbolic polycurves analogous to that of Oda and Tamagawa.

In \cite{Nag}, we studied a good reduction criterion for proper hyperbolic polycurves under some assumptions.
In this paper, we improve the main theorem of \cite{Nag} and discuss not necessarily proper  cases.
The main results of this paper are as follows:

\begin{thm}
Let $K, O_{K}, p, K^{\rm sep}, G_K,$ and $I_{K}$ be as above.
Let $X$ be a proper hyperbolic polycurve over $K$ and $g_{X}$ the maximum genus of $X$ (cf.\,Definition \ref{hyperbolicpoly}.3).
Consider the following conditions:
\begin{description}
\item{(A)}\mbox{} $X$ has good reduction.
\item{(B)}\mbox{} The outer Galois representation $I_{K} \rightarrow
\mathrm{Out}(\pi _1 (X  \times_{\mathrm{Spec}\, K} \mathrm{Spec}\,K^{\mathrm{sep}} ,\overline{t})^{p'})$ is trivial.
\end{description}
Then we have the following:
\begin{enumerate}
\item (A) implies (B). 
\item If $p = 0$, (B) implies (A).
\item If $p > 2g_{X}+1$ and the dimension of $X$ is $2$, (B) implies (A).
\item Suppose that $p > 2g_{X}+1$, $X$ has a $K$-rational point $x$, and the Galois representation
$I_{K(x)} \to \mathrm{Aut}(\pi _1 (X  \times_{\mathrm{Spec}\, K} \mathrm{Spec}\,K^{\mathrm{sep}} ,\overline{x})^{p'})$ defined as (\ref{3doa}) in Section \ref{repsect} is trivial.
Then (A) holds.
\end{enumerate}
\label{main,thm}
\end{thm}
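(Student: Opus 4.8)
The plan is to prove (1) by a standard specialization argument and to derive (2), (3) and (4) from a single induction on the dimension $n$ of $X$, the base case $n=1$ being the Oda--Tamagawa criterion (\cite{Oda1},\,\cite{Oda2},\,\cite{Tama1}) for hyperbolic curves, which is also the tool one applies along the way to the one-dimensional base schemes that occur. For (1): if $\mathfrak{X}$ is a proper smooth $O_{K}$-model of $X$, then, base-changing to the valuation ring of $K^{\mathrm{nr}}$ (the fixed field of $I_{K}$, which is strictly henselian), proper smooth base change for the prime-to-$p$ \'etale fundamental group identifies $\pi_{1}(X\times_{\Spec K}\Spec K^{\mathrm{sep}},\overline{t})^{p'}$ with the fundamental group of the geometric special fibre and shows that the outer $G_{K}$-action on it factors through $G_{K}/I_{K}$; this is exactly (B).

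\emph{Reduction along one fibration.} For (2)--(4), ``good reduction of the polycurve $X$'' should be read as the existence of a tower of smooth proper curve fibrations over $O_{K}$ extending (\ref{0}); since a composite of smooth proper morphisms is smooth and proper, this is equivalent to (A), and it is such a tower that the induction builds one fibration at a time. For $n=1$, (B)$\Rightarrow$(A) is precisely Oda--Tamagawa (applied with a single prime $l\neq p$). For $n\ge 2$ set $Y=X_{n-1}$, itself a hyperbolic polycurve of dimension $n-1$, and $\Pi_{X}=\pi_{1}(X\times_{\Spec K}\Spec K^{\mathrm{sep}})^{p'}$, $\Pi_{Y}=\pi_{1}(Y\times_{\Spec K}\Spec K^{\mathrm{sep}})^{p'}$. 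The morphism $f_{n}$ induces an $I_{K}$-equivariant exact sequence $1\to N\to \Pi_{X}\to\Pi_{Y}\to 1$, where $N$ is the pro-$p'$ fundamental group of the geometric fibre of $f_{n}$; as $N$ is normal in the relevant (geometrically pro-$p'$) quotient $\pi_{1}(X)^{\bullet}$ of $\pi_{1}(X)$ and equals $\ker(\pi_{1}(X)^{\bullet}\to\pi_{1}(Y)^{\bullet})$, a lift $\widetilde{\sigma}\in\pi_{1}(X)^{\bullet}$ of $\sigma\in I_{K}$ acts on $\Pi_{X}$ preserving $N$, its image in $\pi_{1}(Y)^{\bullet}$ represents the outer action of $\sigma$ on $\Pi_{Y}$, and if the outer action of $\sigma$ on $\Pi_{X}$ is inner then so is that on $\Pi_{Y}$. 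Hence (B) for $X$ implies (B) for $Y$; for the analogue of (4), the image of the $K$-rational point $x$ under $f_{n}$ is a $K$-rational point of $Y$ along whose section the triviality of the honest geometric representation descends as well. By the inductive hypothesis $Y$ has good reduction; fix the corresponding tower of models, with top term $\mathfrak{Y}=\mathfrak{X}_{n-1}$ smooth and proper, hence regular, over $O_{K}$, and with special fibre $\mathfrak{Y}_{s}$ a prime divisor.

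\emph{The inductive step.} It then remains to extend $f_{n}\colon X_{n}\to Y$ to a smooth proper family of hyperbolic curves over $\mathfrak{Y}$. By Zariski--Nagata purity, $\mathfrak{Y}$ being regular and $\mathfrak{Y}_{s}$ a prime divisor, it is enough to extend $f_{n}$ over each discrete valuation ring $\mathcal{O}_{\mathfrak{Y},\eta}$ with $\eta$ a generic point of $\mathfrak{Y}_{s}$; over the fraction field of such a ring the pullback of $f_{n}$ is a hyperbolic curve, and from (B) for $X$ (respectively the honest triviality hypothesis of (4)), by means of the exact sequence above together with the comparison of the inertia subgroup $I_{\eta}\subset G_{K(Y)}$ with $I_{K}$ under the localization maps, one reads off that the outer $I_{\eta}$-action on its pro-$p'$ geometric fundamental group is trivial; Oda--Tamagawa then supplies good reduction over $\mathcal{O}_{\mathfrak{Y},\eta}$. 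This produces a smooth family of hyperbolic curves over an open $U\subseteq\mathfrak{Y}$ with $\mathfrak{Y}\setminus U$ of codimension $\ge 2$, which one fills in over the remaining locus by a purity argument for families of hyperbolic curves (for instance, after rigidifying by a level structure, via the properness of the Deligne--Mumford compactification, the pullback of the boundary divisor being empty since it meets no codimension-one point of $\mathfrak{Y}$). The total space of the resulting family is the desired $\mathfrak{X}_{n}$, which closes the induction.

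\emph{Main obstacle.} The hard part is the inductive step, and it is there that the hypotheses are genuinely used. When $p=0$ the pro-$p'$ completion is the full profinite completion, all local monodromy is tame, and the induction goes through unconditionally, yielding (2). In characteristic $p$, translating triviality of the outer action into triviality of the $I_{\eta}$-action on the fibre group, and then applying the positive-characteristic Oda--Tamagawa criterion, needs a tameness bound on $p$ in terms of the genus, which is the source of the hypothesis $p>2g_{X}+1$; when $\dim X=2$ the base $\mathfrak{X}_{1}$ is a surface so only discrete valuation rings occur and the outer datum (B) already suffices, yielding (3). In higher dimension, however, the \emph{outer} action on $\Pi_{X}$ no longer pins down the honest actions on the successive fibre groups and sub-quotients closely enough to iterate; imposing instead the triviality of the honest geometric representation on $\pi_{1}(X\times_{\Spec K}\Spec K^{\mathrm{sep}},\overline{x})^{p'}$ attached to a $K$-rational point $x$ provides compatible sections of all the homotopy exact sequences and removes this obstruction, yielding (4). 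Two further technical ingredients, used throughout, are the exactness of the homotopy sequences after pro-$p'$ completion for families of hyperbolic curves and the fact that the fibre subgroups $N$ are characteristic in the appropriate sense; both rest on the structure theory of surface groups and their sub-quotients, and already underpinned the proper case treated in \cite{Nag}.
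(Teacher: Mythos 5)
Your overall skeleton agrees with the paper's: induction on $n$, passing (B) from $X$ to $X_{n-1}$ via surjectivity of the map on geometric fundamental groups, reducing the inductive step to good reduction of the generic fibre curve over the discrete valuation ring $O_{\mathfrak{X}_{n-1},\xi}$ at the generic point $\xi$ of the special fibre, invoking Oda--Tamagawa there, and filling in codimension $\geq 2$ by the Moret-Bailly-type purity for families of hyperbolic curves. But the proposal has a genuine gap exactly at the step where the paper's real work lies: you assert that from (B) ``one reads off'' that the outer action of the inertia group $I_{\eta}$ (at $\xi$) on the pro-$p'$ (or pro-$l$) fundamental group of the fibre of $f_{n}$ is trivial, ``by comparison of $I_{\eta}\subset G_{K(Y)}$ with $I_{K}$ under the localization maps.'' This comparison does not exist in the form you need: the image $I$ of $I_{\eta}$ in $\Pi_{n-1}=\pi_{1}(X_{n-1})$ is neither contained in the geometric part $\Delta_{n-1}$ nor mapped isomorphically onto (a subgroup of) $I_{K}$ in any way that controls its action on the fibre group, and hypothesis (B) only concerns the outer action of $I_{K}$, i.e.\ of the quotient $\Pi_{n}/\Delta_{n}$. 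The paper closes this gap by a centralizer argument: triviality of the outer $I_{K}$-action plus center-freeness of $\Delta_{n}$ gives a decomposition $\Pi_{n}=\Delta_{n}\times Z_{\Pi_{n}}(\Delta_{n})$ compatible with $\Pi_{n-1}=\Delta_{n-1}\times Z_{\Pi_{n-1}}(\Delta_{n-1})$; the specialization isomorphism $\Delta_{n-1}\overset{\sim}{\to}\pi_{1}(\mathfrak{X}_{n-1})^{p'}$ (Theorem \ref{isom2}) shows $I\subset \mathrm{Ker}(\Pi_{n-1}\to\pi_{1}(\mathfrak{X}_{n-1}))\subset Z_{\Pi_{n-1}}(\Delta_{n-1})$; and the surjection $Z_{\Pi_{n}}(\Delta_{n})\to Z_{\Pi_{n-1}}(\Delta_{n-1})$ forces the outer action of $I$ on $\Delta_{n,n-1}$ to be trivial. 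None of this is in your write-up, and without it the inductive step does not go through even for $p=0$.

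Two further points you rely on are incorrect as stated and are precisely what the hypotheses of parts 3 and 4 are designed to repair. First, ``the exactness of the homotopy sequences after pro-$p'$ completion'' is false in general when $p>0$: pro-$p'$ completion is not exact, and in equal characteristic $p$ even the sequence $\Delta_{n,n-1}\to\Delta_{n}\to\Delta_{n-1}\to 1$ need not be left exact. The paper circumvents this by choosing $l$ a primitive root modulo $p$, using $p>2g_{X}+1$ (resp.\ $p>b_{\mathcal{S}}+1$) to make $\mathrm{Out}(\Delta_{n,n-1}^{l})$ pro-prime-to-$p$, and building the intermediate quotients $\Delta_{n}^{(l,p')}$ and $\Pi_{n}^{((l,p'))}$ as fibre products over $\mathrm{Out}(\Delta_{n,n-1}^{l})$, which yield the exact sequences one then feeds into the centralizer argument; so the role of $p>2g_{X}+1$ is not a ``tameness bound needed to apply Oda--Tamagawa,'' which requires no bound on $p$. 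Second, the restriction $\dim X=2$ in part 3 has nothing to do with ``only discrete valuation rings occur'' (the reduction is to a DVR in every dimension); it is needed because $\Delta_{2}^{(l,p')}$ is center-free (an extension of the center-free $\Delta_{1}^{p'}$ by the center-free $\Delta_{2,1}^{l}$), while center-freeness of $\Delta_{n}^{(l,p')}$ is unknown for $n\geq 3$, and this is also why part 4 substitutes the section through the rational point $x$: there the paper identifies the image of $I_{K(x)}$ in $\Pi_{n-1}^{(p')}$ with $\mathrm{Ker}(\Pi_{n-1}^{(p')}\to\pi_{1}(\mathfrak{X}_{n-1})^{p'})$, which contains the image of $I$, and deduces triviality of the action of $I$ from the assumed triviality of the $I_{K(x)}$-action; your sketch of (4) does not explain how the section controls $I$ at all.
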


\begin{thm}
Let $K$, $O _{K}, p, K^{\rm sep}$, and $I_{K}$ be as in Theorem \ref{main,thm}.
Let $X$ be a hyperbolic polycurve over $K$ with a sequence of parameterizing morphisms
 \begin{equation}
\mathcal{S} : X = X_{n} \rightarrow X_{n-1} \rightarrow \ldots \rightarrow X_{1} \rightarrow X_{0} = \mathrm{Spec}\,K.
\end{equation}
Write $b_{\mathcal{S}}$ for the maximal first Betti number of $\mathcal{S}$ (cf.\,Definition \ref{hyperbolicpoly}.3).
Consider the following conditions:

\begin{description}
\item{(A)}\mbox{} There exists a hyperbolic polycurve $\mathfrak{X} \rightarrow \mathrm{Spec}\, O_{K}$ with a sequence of parameterizing morphisms
 \begin{equation}
\mathfrak{X} = \mathfrak{X}_{n} \rightarrow \mathfrak{X}_{n-1} \rightarrow \ldots \rightarrow \mathfrak{X}_{1} \rightarrow \mathfrak{X}_{0} = \mathrm{Spec}\,O_{K}
\end{equation}
whose generic fiber is isomorphic to $(X, \mathcal{S})$ (cf.\,Definition \ref{hyperbolicpoly}.1).

\item{(B)}\mbox{} The outer Galois representation $I_{K} \rightarrow
\mathrm{Out}(\pi _1 (X  \times_{\mathrm{Spec}\, K} \mathrm{Spec}\,K^{\mathrm{sep}} ,\overline{x})^{p'})$ is trivial.
\end{description}
Then we have the following:
\begin{enumerate}
\item (A) implies (B). 
\item If $p = 0$, (B) implies (A).
\item Suppose that $p > b_{\mathcal{S}}+1$ and the dimension of $X$ is $2$.
Then (B) implies (A).
\end{enumerate}
\label{open,main,thm}
\end{thm}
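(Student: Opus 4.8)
The implication (A) $\Rightarrow$ (B) is the formal direction, and I would deduce it from the specialization theory of \'etale fundamental groups, exactly as in the proper case of \cite{Nag}: if $\mathfrak{X}\to\Spec O_{K}$ is a hyperbolic polycurve extending $(X,\mathcal{S})$, then after base change to a strict henselization $O_{K}^{\mathrm{sh}}$ the prime-to-$p$ geometric fundamental group of the generic fiber is identified, via the specialization isomorphism, with that of the special fiber, so the resulting outer $I_{K}$-action factors through $\pi_{1}(\Spec O_{K}^{\mathrm{sh}})=\{1\}$. The only input beyond \cite{Nag} is the prime-to-$p$ specialization isomorphism for a smooth family of (not necessarily proper) hyperbolic curves, which is standard. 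Thus the content is (B) $\Rightarrow$ (A) in parts (2) and (3).

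For (B) $\Rightarrow$ (A) I would induct on the length $n$ of $\mathcal{S}$. The case $n=1$ is precisely Oda--Tamagawa for a single, possibly non-proper, hyperbolic curve: triviality of the outer $I_{K}$-action on the pro-$p'$ geometric fundamental group is equivalent to good reduction in the sense of Definition \ref{reddef}. For the inductive step, look at the top fibration $f_{n}\colon X=X_{n}\to X_{n-1}$; its homotopy exact sequence realizes the pro-$p'$ geometric fundamental group of the fiber as a normal ``fiber subgroup'' of $\pi_{1}(X\times_{\Spec K}\Spec K^{\mathrm{sep}})^{p'}$ with quotient $\pi_{1}(X_{n-1}\times_{\Spec K}\Spec K^{\mathrm{sep}})^{p'}$. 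Using the group-theoretic structure theory of fundamental groups of hyperbolic polycurves — which is where the hypothesis $p>b_{\mathcal{S}}+1$ (resp.\ $p=0$) enters, to guarantee that the fiber subgroups are characteristic — condition (B) for $X$ descends to condition (B) for $X_{n-1}$, and by the inductive hypothesis $X_{n-1}$ admits a model $\mathfrak{X}_{n-1}\to\Spec O_{K}$ that is a hyperbolic polycurve; note that $\mathfrak{X}_{n-1}$ is regular and its special fiber is reduced, irreducible, and cut out by a uniformizer $\pi_{K}$ of $O_{K}$.

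It then remains to extend the family of hyperbolic curves $f_{n}\colon X_{n}\to X_{n-1}$ to a family of hyperbolic curves over $\mathfrak{X}_{n-1}$. At the generic point $\xi$ of the special fiber of $\mathfrak{X}_{n-1}$, the local ring $\mathcal{O}_{\mathfrak{X}_{n-1},\xi}$ is a discrete valuation ring with fraction field $L=\kappa(X_{n-1})$ and uniformizer $\pi_{K}$, so $O_{K}\hookrightarrow\mathcal{O}_{\mathfrak{X}_{n-1},\xi}$ is unramified; one then analyses the outer action of the inertia group $I_{L}$ at $\xi$ on the pro-$p'$ geometric fundamental group of $X_{n}\times_{X_{n-1}}\Spec L$ over $L$ in terms of the outer $I_{K}$-action of (B), of the good reduction of $X_{n-1}$, and of the geometric monodromy of $f_{n}$, and shows — this is one of the delicate points — that (B) forces it to vanish, so Oda--Tamagawa over $\mathcal{O}_{\mathfrak{X}_{n-1},\xi}$ gives good reduction there. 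Spreading out, and using that $f_{n}$ is already defined over the generic fiber $X_{n-1}\subseteq\mathfrak{X}_{n-1}$, we obtain an extension of $f_{n}$ over an open $U\subseteq\mathfrak{X}_{n-1}$ whose complement is closed, of codimension $\geq 2$, and contained in the special fiber. One fills in the family across this remaining locus — passing if necessary to finite \'etale covers of the fibers to reduce to fiber genus $\geq 2$, or treating the low-genus fiber types explicitly — by a purity-type extension result for families of curves over a regular base (via relative Jacobians and the extension theorem for abelian schemes across a closed subset of codimension $\geq 2$), checking along the way that the resulting tower $\mathfrak{X}_{n}\to\mathfrak{X}_{n-1}\to\cdots\to\Spec O_{K}$ is a hyperbolic polycurve.

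The two points I expect to be the main obstacles, and the source of the hypotheses, are: first, extracting the actual vanishing of the monodromy at $\xi$ from condition (B), which is only a statement in $\mathrm{Out}$ of the fundamental group of the total space, so that a formal restriction-to-a-subgroup argument does not suffice and one must invoke the structure theory of polycurve fundamental groups (hence the role of $p>b_{\mathcal{S}}+1$); second, the extension of the family — and, compatibly, of the relative divisor at infinity of $f_{n}$ as a finite \'etale divisor — across the codimension $\geq 2$ locus, which is the genuinely new difficulty relative to the proper case of \cite{Nag}. This is why part (3) is restricted to $\dim X=2$: then $\mathfrak{X}_{n-1}$ is a smooth curve over $O_{K}$ and the locus to be filled in is a finite set of closed points of the special fiber, where the purity argument and the low-genus bookkeeping are tractable; and in part (2) the absence of wild ramification together with the availability of the full profinite fundamental group lets both steps go through in arbitrary dimension. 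Finally, one should check that Definition \ref{reddef}'s notion of good reduction for open curves is exactly what Oda--Tamagawa produces, so that the inductive construction stays within the class of hyperbolic polycurves over $O_{K}$ at every stage.
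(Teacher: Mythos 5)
Your reduction framework is the same as the paper's (induction on $n$, descend the problem to the fibration $X_{n}\to X_{n-1}$, extend the family over the discrete valuation ring $O_{\mathfrak{X}_{n-1},\xi}$ at the generic point $\xi$ of the special fiber via Oda--Tamagawa, and then spread out over all of $\mathfrak{X}_{n-1}$ by a purity-type extension result for families of curves, which is exactly Proposition \ref{mor}). But the step you yourself flag as ``one of the delicate points'' --- showing that condition (B), which is a priori only a statement about $\mathrm{Out}$ of the fundamental group of the total space, forces the vanishing of the outer action of the inertia $I$ at $\xi$ on $\Delta_{n,n-1}^{p'}$ (or on $\Delta_{n,n-1}^{l}$ for a single $l\neq p$) --- is precisely the mathematical content of the theorem, and your proposal gives no argument for it beyond an appeal to ``structure theory of polycurve fundamental groups.'' The paper resolves it as follows: in residue characteristic $0$, center-freeness of $\Delta_{n}$ plus triviality of the outer $I_{K}$-action yield a canonical decomposition $\Pi_{n}=\Delta_{n}\times Z_{\Pi_{n}}(\Delta_{n})$ compatible with $\Pi_{n}\to\Pi_{n-1}$ (Lemma \ref{lem1}), one shows $I\subset Z_{\Pi_{n-1}}(\Delta_{n-1})$ using the model $\mathfrak{X}_{n-1}$ and the specialization isomorphism (Lemma \ref{lem2}), and the surjectivity $Z_{\Pi_{n}}(\Delta_{n})\twoheadrightarrow Z_{\Pi_{n-1}}(\Delta_{n-1})$ kills the outer action of $I$ on $\Delta_{n,n-1}$; in residue characteristic $p>0$ one must first manufacture a substitute for the missing homotopy exact sequence, namely the intermediate quotients $\Delta_{n}^{(l,p')}$ and $\Pi_{n}^{((l,p'))}$ built as pullbacks along $\mathrm{Aut}(\Delta_{n,n-1}^{l})\to\mathrm{Out}(\Delta_{n,n-1}^{l})$, where $l$ is chosen to generate $(\Z/p\Z)^{*}$ so that $p>b_{\mathcal{S}}+1$ makes $\mathrm{Out}(\Delta_{n,n-1}^{l})$ pro-prime-to-$p$. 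None of this is sketched in your proposal, so as it stands the inductive step is not proved.

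Relatedly, you misplace both hypotheses. The descent of (B) from $X_{n}$ to $X_{n-1}$ needs neither $p>b_{\mathcal{S}}+1$ nor any ``characteristic fiber subgroup'' statement: it only uses the surjection $\Delta_{n}^{p'}\twoheadrightarrow\Delta_{n-1}^{p'}$ coming from geometric connectedness of the fibers. The hypothesis $p>b_{\mathcal{S}}+1$ enters only through the choice of $l$ above, and the restriction to $\dim X=2$ in part (3) is not a geometric issue about filling in the family over a codimension $\geq 2$ locus (Proposition \ref{mor} works over a regular base of arbitrary dimension, so your suggested detour through relative Jacobians is unnecessary); it is a group-theoretic one: the centralizer argument needs the kernel-side group to be center-free, and this is known for $\Delta_{2}^{(l,p')}$ because $\Delta_{2,1}^{l}$ and $\Delta_{1}^{p'}$ are center-free, whereas for $n\geq 3$ it is unknown whether $\Delta_{n}^{(l,p')}$ is center-free (which is why the higher-dimensional case requires the much stronger bound of Theorem \ref{higher,main}). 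Finally, the prime-to-$p$ specialization isomorphism you call ``standard'' is exactly Theorem \ref{isom2}, which the paper proves in Appendix 3 because a reference could not be located; in a complete write-up you would need to supply this as well.
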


If we assume a very strong condition on $b_{\mathcal{S}}$ and $p$, (B) implies (A) in the case where $\dim X \geq 3$.

\begin{thm}
Let $K$, $O _{K}$, $I_{K}$, $X$, $\mathcal{S}$, $b_{\mathcal{S}}$, and $n$ be as in Theorem \ref{open,main,thm}.
Suppose that $n \geq 3$.
Define a function $f_{b_{\mathcal{S}}}(m)$ for $m \geq 3$ in the following way:
\begin{itemize}
\item
For $m=3$, $f_{b_{\mathcal{S}}}(3) = 2^{b_{\mathcal{S}}^{2}}$.
\item
For $m \geq 3$,
$$f_{b_{\mathcal{S}}}(m+1)= (f_{b_{\mathcal{S}}}(m)) \times (2^{b_{\mathcal{S}}^{2} \times f_{b_{\mathcal{S}}}(m)^{2}})^{f_{b_{\mathcal{S}}}(m)}.$$
\end{itemize}
Consider the conditions (A) and (B) in Theorem \ref{open,main,thm}.
If $p > 2^{b_{\mathcal{S}} \times f_{b_{\mathcal{S}}}(n)}$, (B) implies (A).
\label{higher,main}
\end{thm}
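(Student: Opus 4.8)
The plan is to induct on $n$. The base case is $n\le 2$: when $n=2$ we invoke Theorem~\ref{open,main,thm}.3, whose Betti-number hypothesis $p>b_{\mathcal S}+1$ is a fortiori implied by $p>2^{b_{\mathcal S}\times f_{b_{\mathcal S}}(n)}$. For the inductive step one must check that the hypotheses survive truncation: the maximal first Betti number of the sequence $\mathcal S'\colon X_{n-1}\to\cdots\to\Spec K$ is at most $b_{\mathcal S}$, and $f_{b_{\mathcal S}}$ is increasing, so $2^{b_{\mathcal S}\times f_{b_{\mathcal S}}(n-1)}<2^{b_{\mathcal S}\times f_{b_{\mathcal S}}(n)}<p$; hence once condition (B) is known for $(X_{n-1},\mathcal S')$, the inductive hypothesis (for $n-1\ge 3$) or Theorem~\ref{open,main,thm} (for $n-1=2$) applies to it. So the whole content of the inductive step is: assuming the outer $I_K$-action on $\Pi_n:=\pi_1(X_n\times_{\Spec K}\Spec K^{\sep},\overline x)^{p'}$ is trivial, deduce (i) that the outer $I_K$-action on $\Pi_{n-1}:=\pi_1(X_{n-1}\times_{\Spec K}\Spec K^{\sep})^{p'}$ is trivial, and (ii) that the top fibration $f_n\colon X_n\to X_{n-1}$ extends over the model $\mathfrak X_{n-1}$ furnished by the inductive hypothesis (note that condition (A) asks for a model that is \emph{itself} a hyperbolic polycurve with a compatible sequence of parameterizing morphisms, not merely a smooth model of $X_n$).

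For (i), the naive idea of pushing the outer action forward along the surjection $q\colon\Pi_n\twoheadrightarrow\Pi_{n-1}$ induced by $f_n$ fails, because the fiber subgroup $N:=\ker q$ (the pro-$p'$ fundamental group of the geometric fiber of $f_n$) need not be characteristic in $\Pi_n$: the polycurve $X_n$ may carry further curve-fibration structures. The remedy is to pass to a characteristic open subgroup $\Pi_n^{\circ}\le\Pi_n$ over which the fiber subgroups become canonical in the sense of the group-theoretic reconstruction of fiber subgroups of polycurve groups used in the earlier sections, and to bound its index quantitatively. This is exactly where the function $f_{b_{\mathcal S}}$ enters: ascending one level, the fiber at the new top level — computed on the previously selected open subgroup of index $\le f_{b_{\mathcal S}}(m)$ — has first Betti number $\le b_{\mathcal S}\times f_{b_{\mathcal S}}(m)$; there are at most $f_{b_{\mathcal S}}(m)$ candidate fiber subgroups to separate; and separating them on mod-$2$ homology costs a factor $\le 2^{(b_{\mathcal S}\times f_{b_{\mathcal S}}(m))^{2}}$ per candidate, which reproduces the recursion defining $f_{b_{\mathcal S}}(m+1)$. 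The bound $p>2^{b_{\mathcal S}\times f_{b_{\mathcal S}}(n)}$ exceeds the order of every mod-$2$ abelian cover occurring in this process, so all the covers we pass to are prime-to-$p$ and therefore genuinely visible inside the pro-$p'$ fundamental group. Triviality of the outer action then descends to $\Pi_n^{\circ}$, thence (the fiber subgroup now being characteristic) to its image $\Pi_{n-1}^{\circ}\le\Pi_{n-1}$, and a rigidity argument for hyperbolic-curve groups — triviality of the outer $I_K$-action on a characteristic open subgroup of controlled index forces it on the whole group — yields triviality of the outer $I_K$-action on $\Pi_{n-1}$.

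For (ii), with $\mathfrak X_{n-1}\to\Spec O_K$ a hyperbolic polycurve model of $(X_{n-1},\mathcal S')$ in hand, I would apply a relative form of the Oda--Tamagawa criterion over the base $\mathfrak X_{n-1}$: the family $f_n$ extends to a smooth curve fibration $\mathfrak X_n\to\mathfrak X_{n-1}$ precisely when the outer monodromy action of $\pi_1(\mathfrak X_{n-1})$ on $N$ is unramified along the special fiber of $\mathfrak X_{n-1}$. The inertia subgroups of $\pi_1(\mathfrak X_{n-1})$ attached to the special fiber are, up to conjugacy, images of inertia groups $I_{K'}$ for finite extensions $K'/K$ arising from $O_{K'}$-points of $\mathfrak X_{n-1}$ that lift points of the special fiber; pulling $X_n$ back along such a point gives a hyperbolic curve over $K'$ whose pro-$p'$ outer $I_{K'}$-monodromy on $N$ is a restriction of the outer $I_{K'}$-action on $\Pi_n$, hence trivial by hypothesis. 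Spreading this over all such points gives the required unramifiedness and produces $\mathfrak X_n$, completing the induction.

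\textbf{Main obstacle.} The hard part is step (i): making the reconstruction of fiber subgroups of polycurve groups effective, controlling the index of the characteristic open subgroup over which it works — the source of the rapidly growing recursion for $f_{b_{\mathcal S}}$ — and then carrying out the rigidity step that recovers triviality of the outer action from an open subgroup to the whole group while keeping the index ambiguity within the same bound. Step (ii) and the invocation of the inductive hypothesis are, by comparison, routine once the relative good-reduction input is set up.
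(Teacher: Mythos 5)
Your induction scaffolding (truncate $\mathcal S$, note $b_{\mathcal S'}\le b_{\mathcal S}$ and $f_{b_{\mathcal S}}$ increasing, quote Theorem \ref{open,main,thm}.3 when the dimension drops to $2$) matches the paper, but the two substantive steps are misplaced. Your step (i) addresses a difficulty that is not there: since $f_n$ has geometrically connected fibers, $\Delta_n\to\Delta_{n-1}$ is surjective, and triviality of the outer $I_K$-action on $\Delta_n^{p'}$ passes to $\Delta_{n-1}^{p'}$ directly by equivariance of the surjection $\Delta_n^{p'}\twoheadrightarrow\Delta_{n-1}^{p'}$ (this is exactly how Section \ref{reduction} produces the model $\mathfrak X_{n-1}$ by induction). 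No characteristic open subgroup, no group-theoretic reconstruction of fiber subgroups, and no index bookkeeping is needed there; moreover your closing ``rigidity'' claim (triviality of the outer action on a characteristic open subgroup of controlled index forces it on the whole group) is asserted without proof and is not a tool the argument can rely on. So the recursion defining $f_{b_{\mathcal S}}$ cannot be accounted for by your step (i).

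The genuine difficulty is the step you declare routine. By Proposition \ref{claim} the whole theorem reduces to showing that the inertia subgroup $I$ at the \emph{generic point $\xi$ of the special fiber} of $\mathfrak X_{n-1}$ --- a codimension-one point, not an $O_{K'}$-point; decomposition groups of sections do not contain this inertia, and $\mathrm{Ker}(\Pi_{n-1}\to\pi_1(\mathfrak X_{n-1},\overline t))$ is normally generated by $I$ --- acts trivially in $\mathrm{Out}(\Delta_{n,n-1}^{l})$. Your deduction ``the outer monodromy on $N$ is a restriction of the outer action on $\pi_1(X\times_{\Spec K}\Spec K^{\sep})^{p'}$, hence trivial'' is a non sequitur: pro-$p'$ completion is not exact, so $1\to\Delta_{n,n-1}^{p'}\to\Delta_n^{p'}\to\Delta_{n-1}^{p'}\to1$ is not known to be exact, and even with an exact sequence, triviality in $\mathrm{Out}$ of the ambient group does not restrict to triviality in $\mathrm{Out}$ of a normal subgroup without a centralizer/center-freeness argument --- and center-freeness of $\Delta_{n-1}^{p'}$ is precisely what is unknown once $n\ge3$. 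This is what Section 6 of the paper supplies and where $f_{b_{\mathcal S}}$ actually enters: Proposition \ref{higher} recursively constructs an open normal subgroup $\Delta_{n-1}'\subset\Delta_{n-1}$ of index at most $f_{b,l}(n)$ and prime to $p$ (the sole use of $p>2^{b_{\mathcal S}\times f_{b_{\mathcal S}}(n)}$, with $l=2$), on which the actions on the mod-$l$ abelianized fiber subgroups are trivial; Lemma \ref{Ander} then yields exact pro-$l$ sequences, Lemma \ref{injective} produces a center-free quotient $\Delta_{n-1}^{(l)}$ of $\Delta_{n-1}^{p'}$ (center-freeness via injectivity of $\Gamma_{n-1}\to\mathrm{Out}((\Delta_{n-1}')^{l})$), and Proposition \ref{bargp} reruns the centralizer decomposition of Lemmas \ref{lem1}--\ref{lem2} on these quotients to kill the outer action of $I$ on $\Delta_{n,n-1}^{l}$, after which Proposition \ref{mor} extends the family over $\mathfrak X_{n-1}$. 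Neither this construction nor any substitute for it appears in your proposal, so the proof as proposed does not go through.
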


\begin{rem}
The main result of \cite{Nag} is described as follows:
Let $K$, $O _{K}$, and $I_{K}$ be as in Theorem \ref{main,thm}.
Let $X$ be a proper hyperbolic polycurve over $K$ which has a sequence of parameterizing morphisms
$$X=X_{n}\to \ldots \to X_{0}=\Spec K$$
such that, for each $1\leq i \leq n$, $X_{i} \rightarrow X_{i-1}$ has a section.
Write $g_{X}$ for the minimum of the maximal genera of such sequences of parameterizing morphisms of $X$ (cf.\,Definition \ref{hyperbolicpoly}).
Consider the condition (A) in Theorem \ref{main,thm} and the following condition:\\
(B)' Let $x$ be a closed point of $X$, $O_{K(x)}$ a valuation ring  of the residual field $K(x)$ of $x$ over $O_{K}$, $K(x)^{\mathrm{sep}}$ a separable closure of $K(x)$, and $I_{K(x)}$ an inertia subgroup of $O_{K(x)}$ in the absolute Galois group $\mathrm{Gal}(K(x)^{\mathrm{sep}}/K(x))$.
Then the action of $I_{K(x)}$ on $\pi_{1}(X\times_{\mathrm{Spec}\,K}\mathrm{Spec}\,K(x)^{\mathrm{sep}},\overline{x})^{p'}$ is trivial.

\noindent Then (A) implies (B)'.
If $p=0$ or $p > 2g_{X}+1$, (B)' implies (A). 

The condition (B)' is stronger than the condition (B) in Theorem \ref{main,thm} or the condition given in Theorem \ref{main,thm}.4.
Hence, the main result of \cite{Nag} is weaker than Theorem \ref{main,thm} because we need to assume that each $X_{i} \rightarrow X_{i-1}$ has a section and that the condition (B)' is satisfied.
\end{rem}

To prove the implication (B) $\Rightarrow$ (A) or (B)' $\Rightarrow$ (A) by induction on the dimension of $X$, we need a homotopy exact sequence of \'etale fundamental groups of hyperbolic curves.
In the previous paper \cite{Nag}, we constructed homotopy exact sequences of Tannakian fundamental groups of certain categories of smooth $\Q_{l}$-sheaves by using the existence of a section of each morphism $X_{i}\to X_{i-1}$.
Also, we needed the assumption (B)', which is stronger than (B), because we used a criterion for smoothness of $\mathbb{Q}_{l}$-sheaves which is due to Drinfeld \cite{Dri}.

In this paper, we use different arguments from those of \cite{Nag} to obtain stronger results.
Since the implication (A) $\Rightarrow$ (B) follows from a standard specialization argument, we explain key ingredients of the proof of the implication (B) $\Rightarrow$ (A) (assuming the condition on $p$, $g_{X}$, and $b_{\mathcal{S}}$ in the assertions), which enables us to improve the result of \cite{Nag}.
Let $X$ be as in Theorem \ref{main,thm}, Theorem \ref{open,main,thm}, or Theorem \ref{higher,main}.
Take a geometric point of $X\times_{\Spec K}\Spec K^{\sep}$ and write $\Delta$ (resp.\,$\Pi$) for the \'etale fundamental group of the scheme $X\times_{\Spec K}\Spec K^{\sep}$ (resp.\,$X$) determined by this geometric point.

\begin{enumerate}
\item \underline{A decomposition of $\Pi$}

If $p = 0$, we can obtain a decomposition $\Pi \cong \Delta \times Z_{\Pi}(\Delta)$, where $Z_{\Pi}(\Delta)$ is the centralizer subgroup of $\Delta$ in $\Pi$ by using the assumption that the outer Galois action of $I_{K}$ is trivial and the homotopy exact sequences in {\cite[PROPOSITION 2.5]{Ho}} (in the case where $p=0$).
We can prove the implication (B) $\Rightarrow$ (A) by using this decomposition.

\item \underline{Intermediate quotient group}

Note that we do not have appropriate homotopy exact sequences associated to the fibrations $X_{i} \rightarrow X_{i-1}\quad(2\leq i \leq n)$ if $p > 0$.
In fact, the functor of taking pro-$p'$ completion (of profinite groups) is not an exact functor.
Moreover, in the case the characteristic of $K$ is positive, the sequence in {\cite[PROPOSITION 2.5]{Ho}} is no longer exact.
In this paper, we consider an intermediate quotient group of $\Delta$ (which we will write $\Delta^{(l,p')}$ for) between $\Delta^{p'}$ and $\Delta^{l}$, for which we can obtain a homotopy exact sequence.
If the dimension of $X$ is $2$, we can show the implication (B) $\Rightarrow$ (A) by using the center-freeness of $\Delta^{(l,p')}$ and applying an argument similar to that in 1.
If $X$ admits a $K$-rational point $x$, we can use $I_{K(x)}$ instead of $Z_{\Pi}(\Delta)$.
Then we can prove Theorem \ref{main,thm}.4.

 \item \underline{Further intermediate quotient group}

If the dimension of $X$ is equal to or greater than $3$, we do not know whether the group $\Delta^{(l,p')}$ is center-free or not.
However, if $p$ is big enough, we can find a certain quotient $\overline{\Delta}$ of $\Delta$ which is center-free and for which there exists a homotopy exact sequence.
Thus, we can prove the implication (B) $\Rightarrow$ (A) in higher dimensional cases if $p$ is big enough.

 \end{enumerate}

The content of each section is as follows: 
In Section \ref{repsect}, we give a review of outer Galois representations associated to homotopy exact sequences of \'etale fundamental groups of hyperbolic curves.
In Section 3, we give a precise definition of a hyperbolic polycurve and a first step of the proof of Theorem \ref{main,thm} and Theorem \ref{open,main,thm}.
In Section 4, we give a proof of Theorem \ref{main,thm} and Theorem \ref{open,main,thm} in the case of residual characteristic $0$.
In Section \ref{pcase}, we give a proof of Theorem \ref{main,thm} and \ref{open,main,thm} in the case of residual characteristic $p > 0$.
In Section 6, we give a proof of Theorem \ref{higher,main}.
In Section 7, we review the property of extension of family of proper hyperbolic curves proved in \cite{Mor} and prove a non-proper version of it.
In Section 8, we give an example of a hyperbolic polycurve for which the naive analogue of the criterion of Oda and Tamagawa does not hold.
In Section 9, we prove that a sort of specialization homomorphisms of pro-$p'$ \'etale fundamental groups is an isomorphism, which is same as the content of {\cite[Section 7]{Nag2}}.

{\it Acknowledgements:} The author thanks his supervisor Atsushi Shiho for useful discussions and helpful advice.
During the research he was supported by the Program for Leading Graduate Course for Frontier of Mathematical Sciences and Physics. A large percentage of the work was done during his stay at Researches Institute for Mathematical Sciences, Kyoto University.
He thanks them for the hospitality and encouragement, especially Akio Tamagawa and Yuichiro Hoshi.

\label{intro}

\section{Good reduction criterion for hyperbolic curves}

In this section, we recall the good reduction criterion for hyperbolic curves
proven by Oda and Tamagawa.
Let $K, O_{K}, p, K^{\rm sep}, G_K,$ and $I_{K}$ be as in Section \ref{intro}.

\begin{dfn}
Let $S$ be a scheme, $\overline{X}$ a scheme over $S$, and $D$ an effective divisor on $\overline{X}$.
We shall say that the pair $(\overline{X},D)$ is a {\it hyperbolic curve} over $S$ if the following three conditions are satisfied:
\begin{itemize}
\item The morphism $\overline{X} \rightarrow S$ is proper, smooth, with geometrically connected fibers of dimension one and of genus $g$.
\item The morphism $D \rightarrow S$ is finite \'etale of degree $n$.
\item $2g+n-2 > 0.$
\end{itemize}
If $n = 0$ (resp.\,$n > 0$), we call the number $2g$ (resp.\,$2g+n-1$) the first Betti number of the curve.
\label{hyp.curve}
\end{dfn}

\begin{dfn}
\begin{enumerate}
\item
Let $X$ be a proper smooth scheme geometrically connected over $K$.
We say that $X$ has good reduction if there exists a proper smooth $O_{K}$-scheme $\mathfrak{X}$ 
whose generic fiber $\mathfrak{X}\times_{\Spec O_{K}}\Spec K$ is isomorphic to $X$ over $K$. 
We refer to $\mathfrak{X}$ as a {\it smooth model} of $X$.
\item
Let $(\overline{X}, D)$ be a hyperbolic curve over $K$.
We shall say that $(\overline{X}, D)$ has good reduction if there exists a hyperbolic curve $(\overline{\mathfrak{X}}, \mathfrak{D})$ over $\Spec K$ whose generic fiber $(\overline{\mathfrak{X}}\times_{\Spec O_{K}}\Spec K, \mathfrak{D}\times_{\Spec O_{K}}\Spec K)$ is isomorphic to $(\overline{X}, D)$ over $K$.
We refer to $(\overline{\mathfrak{X}}, \mathfrak{D})$ as a {\it smooth model} of $(\overline{X}, D)$.
\end{enumerate}
\label{reddef}
\end{dfn}

\begin{rem}
If a smooth model of a hyperbolic curve exists, it is unique up to canonical isomorphism by
\cite{DM} and \cite{Knu}.
\end{rem}

Let $(\overline{X}, D)$ be a hyperbolic curve $K$, $X$ the open subscheme $\overline{X} \setminus D$ of $\overline{X}$, and $ \overline{t}$ a geometric point of $X \times_{\mathrm{Spec}\,K} \Spec K^{\sep}$.
We have the following exact sequence of profinite groups:
\begin{eqnarray}
1 \rightarrow \pi _1 (X  \times_{\mathrm{Spec}\,K} \Spec K^{\sep} ,\overline{t} ) \rightarrow
 \pi _1 (X,\overline{t} ) \rightarrow G_K \rightarrow 1.
\label{3fhes}
\end{eqnarray} 
This exact sequence yields an outer Galois action
\begin{eqnarray}
G_{K} \rightarrow \mathrm{Out}(\pi _1 (X  \times_{\mathrm{Spec}\,K} \Spec K^{\sep},\overline{t})).
\end{eqnarray}
Then, for any prime number $l \neq p$, we have natural homomorphisms
\begin{eqnarray}
\begin{split}
I_{K} \hookrightarrow G_{K} &\rightarrow \mathrm{Out}(\pi _1 (X  \times_{\mathrm{Spec}\,K} \Spec K^{\sep},\overline{t})) \\
&\to \mathrm{Out}(\pi _1 (X  \times_{\mathrm{Spec}\,K} \Spec K^{\sep},\overline{t})^{p'})\\
&\to \mathrm{Out}(\pi _1 (X  \times_{\mathrm{Spec}\,K} \Spec K^{\sep},\overline{t})^{l}).
\end{split}\label{eq:3oa}
\end{eqnarray}
Oda and Tamagawa gave the following criterion:

\begin{prop}[\cite{Oda1}, \cite{Oda2}, and {\cite[Section 5]{Tama1}}]
The following are equivalent:
\begin{enumerate}
\item $(\overline{X}, D)$ has good reduction.
\item The outer action $I_K \rightarrow \mathrm{Out}( \pi _1 (X  \times_{\mathrm{Spec}\,K} \Spec K^{\sep} ,\overline{t}) ^{p'})$ defined by (\ref{eq:3oa}) is trivial.
\item There exists a prime number $l \neq p$ such that the outer action
$I_K \rightarrow \mathrm{Out}( \pi _1 (X  \times_{\mathrm{Spec}\,K} \Spec K^{\sep} ,\overline{t}) ^{l})$ defined by (\ref{eq:3oa}) is trivial.
\end{enumerate}
\label{3odatama}
\end{prop}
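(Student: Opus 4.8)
The plan is to prove the cycle of implications $(1)\Rightarrow(2)\Rightarrow(3)\Rightarrow(1)$; the first two are formal and the third carries all the weight. Throughout one first reduces to the case where $O_K$ is strictly henselian with algebraically closed residue field: replace $K$ by $K^{\mathrm{sh}}\deq\mathrm{Frac}(O_K^{\mathrm{sh}})$, so that $I_K=\mathrm{Gal}(K^{\mathrm{sep}}/K^{\mathrm{sh}})=G_K$, using that good reduction over $O_K$ is equivalent to good reduction over $O_K^{\mathrm{sh}}$ by faithfully flat descent together with the uniqueness of smooth models recorded after Definition \ref{reddef}.

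For $(1)\Rightarrow(2)$: given a smooth model $(\overline{\mathfrak X},\mathfrak D)$ over $O_K$, the complement $\mathfrak X\deq\overline{\mathfrak X}\setminus\mathfrak D$ is smooth over $O_K$ with generic fibre $X$. By the specialization theory of tame \'etale fundamental groups (SGA~1, Exp.~XIII) the natural map $\pi_1(X\times_K K^{\mathrm{sep}})^{p'}\to\pi_1(\mathfrak X_{\bar s})^{p'}$ is an isomorphism, and the outer $G_K$-action on the source factors, via this isomorphism, through $\pi_1(\Spec O_K)$; since $O_K$ is strictly henselian this group is trivial, so the outer action of $I_K=G_K$ is trivial. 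The implication $(2)\Rightarrow(3)$ is immediate from the description (\ref{eq:3oa}): for any $l\ne p$ the outer action on $\pi_1(\cdots)^{l}$ is the composite of the outer action on $\pi_1(\cdots)^{p'}$ with $\mathrm{Out}(\pi_1(\cdots)^{p'})\to\mathrm{Out}(\pi_1(\cdots)^{l})$, hence trivial.

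For $(3)\Rightarrow(1)$: fix $l\ne p$ with $I_K$ acting trivially on $\Delta^l\deq\pi_1(X\times_K K^{\mathrm{sep}})^{l}$. By the stable reduction theorem (Deligne--Mumford, Knudsen) there is a finite separable $L/K$ over which $(\overline X_L,D_L)$ extends to a pointed stable curve $(\overline{\mathcal X},\mathcal D)$ over $O_L$; since $I_L\subseteq I_K$, the inertia $I_L$ also acts trivially on $\Delta^l$. It suffices to show (i) that the pointed special fibre $(\overline{\mathcal X}_s,\mathcal D_s)$ is \emph{smooth} --- equivalently that the dual graph of $\overline{\mathcal X}_s$ is a single vertex and $\mathcal D_s$ is \'etale --- so that $(\overline{\mathcal X},\mathcal D)$ is a smooth model over $O_L$; and (ii) that this smooth model descends to $O_K$. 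For (i) one invokes the description of the $I_L$-action on $\Delta^l$ via tame nearby cycles and the weight--monodromy filtration: the action is quasi-unipotent, its logarithm $N$ is the monodromy operator of the degeneration, and $N$ is computed combinatorially from $(\overline{\mathcal X}_s,\mathcal D_s)$ --- non-separating nodes and collisions of marked sections already contribute to the monodromy on $\Delta^{l,\mathrm{ab}}$, while a \emph{separating} node contributes a nontrivial operator on $\Delta^{l}/\Gamma_3$ (the quotient by the third term of the lower central series; in topological terms a Dehn twist along a separating curve) which is invisible on $\Delta^{l,\mathrm{ab}}$. Triviality of the $I_L$-action on $\Delta^l$ kills $N$ on all of $\Delta^l/\Gamma_3$, forcing the special fibre to be smooth. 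For (ii), after enlarging $L$ so that $L/K$ is Galois, uniqueness of the smooth model produces a semilinear $\mathrm{Gal}(L/K)$-action on $(\overline{\mathcal X},\mathcal D)$ extending that on the generic fibre, and a descent argument --- using that $\mathrm{Aut}$ of a hyperbolic curve injects into $\mathrm{Out}(\Delta^l)$ and that $I_K$ acts trivially there --- shows this descent datum is effective over $O_K$, yielding a smooth model of $(\overline X,D)$ over $O_K$.

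The main obstacle is step (i) of $(3)\Rightarrow(1)$: extracting smoothness of the special fibre from the Galois action on the \emph{non-abelian} pro-$l$ fundamental group. The abelianized picture is genuinely insufficient, since a smooth curve can degenerate to a curve with separating nodes while its Jacobian acquires good reduction, so N\'eron--Ogg--Shafarevich applied to $\mathrm{Jac}(\overline X)$ does not by itself decide the question; one must follow the inertia action on $\Delta^l$ modulo $\Gamma_3$ and match the resulting data against the dual graph of the stable model. This, together with the ramified descent in step (ii), is exactly the analysis of Oda in the proper case and of Tamagawa (\cite[Section~5]{Tama1}) in general.
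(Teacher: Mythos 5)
This proposition is not proved in the paper: it is quoted as a known theorem of Oda and Tamagawa, and the only part the paper reproves is the implication $1\Rightarrow 2$ (cf.\ Remark \ref{3rem}), which is done in Section \ref{reduction} via the specialization isomorphism of Theorem \ref{isom2} over $O_K^{\mathrm{sh}}$ --- essentially your argument for $(1)\Rightarrow(2)$. So your proposal is a reconstruction of the external proof, and its architecture (reduce to $O_K^{\mathrm{sh}}$, specialization for $(1)\Rightarrow(2)$, formal $(2)\Rightarrow(3)$, stable reduction plus monodromy for $(3)\Rightarrow(1)$) is indeed the standard one. Two steps, however, are wrong as written, and they sit exactly at the crux.

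First, the claim that a separating node ``contributes a nontrivial operator on $\Delta^{l}/\Gamma_3$'' is false: a Dehn twist along a separating simple closed curve lies in the Torelli group and, by Johnson's theorem, the kernel of the Torelli action on $\pi_1/\Gamma_3$ is generated precisely by such twists; equivalently, since the class $\gamma$ of a separating curve lies in $\Gamma_2$, conjugation by $\gamma$ is trivial modulo $\Gamma_3$. The separating degeneration is first detected on $\Delta^{l}/\Gamma_4$ (this is the computation of Asada--Matsumoto--Oda used by Oda), so ``triviality on $\Delta^{l}/\Gamma_3$ forces smoothness'' does not hold; since your hypothesis is triviality on all of $\Delta^{l}$ the strategy can be repaired, but the key step must be restated with the correct quotient and with an argument that the twist is \emph{outer}-nontrivial there. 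Second, your descent step (ii) does not work as stated: if $L/K$ is ramified, $\mathrm{Spec}\,O_L\to\mathrm{Spec}\,O_K$ is not an \'etale Galois cover, so a semilinear $\mathrm{Gal}(L/K)$-action on the model over $O_L$ is \emph{not} a descent datum and ``effectivity'' is not the issue; an elliptic curve with potentially good but not good reduction carries exactly such a semilinear action on its good model over $O_L$, so this mechanism alone cannot produce a model over $O_K$. The standard repair --- and what the cited proofs actually do --- is to avoid ramified descent altogether: triviality of the $I_K$-action on $\Delta^{l,\mathrm{ab}}$, i.e.\ on the $l$-adic Tate module of the (generalized) Jacobian, is in particular unipotence, so by the Deligne--Mumford/Grothendieck criterion the stable model of $(\overline{X},D)$ exists over $O_K^{\mathrm{sh}}$ itself; the monodromy analysis (graph part on the abelianization, separating nodes on the deeper quotient) then shows its special fiber is smooth with \'etale divisor, and the passage from $O_K^{\mathrm{sh}}$ down to $O_K$ is genuine \'etale (limit) descent using the uniqueness of smooth models. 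Alternatively one can make your parenthetical remark precise --- inertia of $\mathrm{Gal}(L/K)$ acts faithfully on the stable special fiber because automorphisms inject into $\mathrm{Out}(\Delta^{l})$ via the specialization isomorphism, forcing $L/K$ unramified --- but that is a different argument from effectivity of a descent datum and must be spelled out.
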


Suppose that $X$ is a proper hyperbolic curve over $K$ and there exists a section $s : \text{Spec}\,K \to X$.
Consider the geometric point $\overline{s}: \Spec K^{\sep}\to X\times_{\Spec K}\Spec K^{\sep}$ induced by $s$.
Then we have the exact sequence \eqref{3fhes} with $\overline{t}$ replaced by $\overline{s}$.
The section $s$ defines a section of the homomophism $ \pi _1 (X,\overline{s}) \rightarrow G_K $ in the homotopy exact sequence \eqref{3fhes}.
This induces a homomorphism $G_K \rightarrow \mathrm{Aut}( \pi _1 (X  \times_{\mathrm{Spec}\,K} \Spec K^{\sep} ,\overline{s}))$ such that the composite homomorphism $G_K \rightarrow \mathrm{Aut}( \pi _1 (X  \times_{\mathrm{Spec}\,K} \Spec K^{\sep} ,\overline{s})) \rightarrow \mathrm{Out}(\pi _1 (X  \times_{\mathrm{Spec}\,K} \Spec K^{\sep},\overline{s}))$ coincides with outer representation $G_{K} \rightarrow \mathrm{Out}(\pi _1 (X  \times_{\mathrm{Spec}\,K} \Spec K^{\sep},\overline{s}))$ in \eqref{eq:3oa}.
For a prime number $l \neq p$, we obtain homomorphisms
\begin{equation}
\begin{split}
I_{K} \hookrightarrow G_{K} &\rightarrow \mathrm{Aut}(\pi _1 (X  \times_{\mathrm{Spec}\,K} \Spec K^{\sep} ,\overline{s})) \\
&\rightarrow \mathrm{Aut}(\pi _1 (X  \times_{\mathrm{Spec}\,K} \Spec K^{\sep},\overline{s})^{p'}) \\
&\rightarrow \mathrm{Aut}(\pi _1 (X  \times_{\mathrm{Spec}\,K} \Spec K^{\sep},\overline{s})^{l}).
\end{split}
\label{3doa}
\end{equation}

\begin{prop}
The following are equivalent:
\begin{enumerate}
\item $X$ has good reduction.
\item The action of $I_K$ on $\pi _1 (X  \times_{\mathrm{Spec}\,K} \Spec K^{\sep} ,\overline{s} )^{p'}$ defined by (\ref{3doa}) is trivial.
\item The action of $I_K$ on $\pi _1 (X  \times_{\mathrm{Spec}\,K} \Spec K^{\sep} ,\overline{s} )^{l}$ defined by (\ref{3doa}) is trivial.
\end{enumerate}
\label{3gr}
\end{prop}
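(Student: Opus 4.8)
Proposition \ref{3gr} should be deduced from Proposition \ref{3odatama} by using the section $s$ only to rigidify the outer representation there into a genuine representation; concretely, I expect two of the three implications to be purely formal. The implication $2\Rightarrow 3$ is immediate, because the action of $I_K$ on $\pi_1(X\times_{\Spec K}\Spec K^{\mathrm{sep}},\overline{s})^{l}$ appearing in (\ref{3doa}) factors through its action on $\pi_1(X\times_{\Spec K}\Spec K^{\mathrm{sep}},\overline{s})^{p'}$. For $3\Rightarrow 1$ I would compose the $I_K$-action on $\pi_1(X\times_{\Spec K}\Spec K^{\mathrm{sep}},\overline{s})^{l}$ with the projection $\mathrm{Aut}(\pi_1(\cdots)^{l})\to\mathrm{Out}(\pi_1(\cdots)^{l})$, note that the resulting outer action of $I_K$ is the one of (\ref{eq:3oa}) (this is exactly the compatibility recorded in the paragraph preceding Proposition \ref{3gr}), conclude that it is trivial, and then invoke Proposition \ref{3odatama} applied to the hyperbolic curve $(X,\emptyset)$ — whose good reduction as a hyperbolic curve is the same as the good reduction of the proper smooth scheme $X$. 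Hence the only substantive point is $1\Rightarrow 2$.

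For $1\Rightarrow 2$, let $\mathfrak{X}\to\Spec O_K$ be a smooth model of $X$; since $X$ is proper, $\mathfrak{X}$ is proper over $O_K$, so by the valuative criterion of properness the section $s$ extends uniquely to a section $\mathfrak{s}\colon\Spec O_K\to\mathfrak{X}$. I would then pass to a strict henselization $O_K^{\mathrm{sh}}$ of $O_K$, with fraction field $K^{\mathrm{ur}}$ — chosen so that $\mathrm{Gal}(K^{\mathrm{sep}}/K^{\mathrm{ur}})$ is identified with $I_K$ — and separably closed residue field, and set $\mathfrak{X}^{\mathrm{sh}}=\mathfrak{X}\times_{O_K}O_K^{\mathrm{sh}}$: a proper smooth $O_K^{\mathrm{sh}}$-scheme with geometrically connected fibers, with generic fiber $X\times_K K^{\mathrm{ur}}$, special fiber $Y$ (proper smooth over a separably closed field), and section $\mathfrak{s}^{\mathrm{sh}}$. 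With compatible geometric base points I then have maps of étale fundamental groups
\[
\pi_1(X\times_{\Spec K}\Spec K^{\mathrm{sep}},\overline{s}) \xrightarrow{\ a\ } \pi_1(\mathfrak{X}^{\mathrm{sh}},\overline{s}) \xleftarrow{\ b\ } \pi_1(Y,\overline{s}),
\]
where $b$ is an isomorphism (proper base change over the henselian local ring $O_K^{\mathrm{sh}}$) and where the specialization homomorphism $\mathrm{sp}:=b^{-1}\circ a$ becomes an isomorphism after pro-$p'$ completion, by Grothendieck's specialization theorem for proper smooth families with geometrically connected fibers.

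The crux — the step I expect to require the most care — is to check that $\mathrm{sp}$ is invariant under the $I_K$-action $\rho$ defined by (\ref{3doa}), i.e.\ that $\mathrm{sp}\circ\rho(g)=\mathrm{sp}$ for every $g\in I_K$. By functoriality of the homotopy exact sequence along $\Spec K^{\mathrm{ur}}\to\Spec K$, the restriction of $\rho$ to $I_K=\mathrm{Gal}(K^{\mathrm{sep}}/K^{\mathrm{ur}})$ is conjugation by $\widetilde{s}(g)$, where $\widetilde{s}\colon I_K\to\pi_1(X\times_K K^{\mathrm{ur}},\overline{s})$ is the section of $\pi_1(X\times_K K^{\mathrm{ur}},\overline{s})\to I_K$ induced by the $K^{\mathrm{ur}}$-point underlying $\mathfrak{s}^{\mathrm{sh}}$. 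The key observation is that the composite $\Spec K^{\mathrm{ur}}\to X\times_K K^{\mathrm{ur}}\hookrightarrow\mathfrak{X}^{\mathrm{sh}}$ factors through $\Spec O_K^{\mathrm{sh}}$, and $\pi_1(\Spec O_K^{\mathrm{sh}})=1$, so the image of $\widetilde{s}(g)$ in $\pi_1(\mathfrak{X}^{\mathrm{sh}},\overline{s})$ is trivial; hence, for $\gamma$ in the normal subgroup $\pi_1(X\times_{\Spec K}\Spec K^{\mathrm{sep}},\overline{s})\subset\pi_1(X\times_K K^{\mathrm{ur}},\overline{s})$, the elements $\rho(g)(\gamma)=\widetilde{s}(g)\,\gamma\,\widetilde{s}(g)^{-1}$ and $\gamma$ have the same image under $a$, which is the desired invariance. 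Passing to pro-$p'$ completions then gives $\mathrm{sp}^{p'}\circ\rho(g)^{p'}=\mathrm{sp}^{p'}$ with $\mathrm{sp}^{p'}$ an isomorphism, so $\rho(g)^{p'}=\mathrm{id}$ for all $g\in I_K$ — which is precisely condition $2$. Note that, unlike in the higher-dimensional arguments developed later in the paper, no center-freeness of the geometric fundamental group is needed here.
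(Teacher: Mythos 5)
Your proposal is correct and follows essentially the same route as the paper: $2\Rightarrow 3$ is formal, $3\Rightarrow 1$ passes to the outer action and invokes Proposition \ref{3odatama}, and $1\Rightarrow 2$ is the specialization argument that the paper delegates to Remark \ref{3rem} (i.e.\ the first part of the proof of Theorem \ref{main,thm}.1, resting on the pro-$p'$ specialization isomorphism of Theorem \ref{isom2} and the triviality of $\pi_1(\mathrm{Spec}\,O_K^{\mathrm{sh}})$). Your write-up merely makes explicit the section-level refinement (extending $s$ over $O_K^{\mathrm{sh}}$ by properness so that $\widetilde{s}(I_K)$ dies in $\pi_1(\mathfrak{X}^{\mathrm{sh}})$), which is exactly what the paper's henselization diagram argument uses implicitly.
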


\begin{proof}
For the proof of the implication $1 \Rightarrow 2$, see Remark \ref{3rem}.
The implication $2 \Rightarrow 3$ is trivial.
Assume that the action of $I_{K}$ on $\pi _1 (X \times_{\Spec K} \Spec K^{\sep},\overline{s})^{l}$ is trivial.
Then the outer action $I_{K} \to \mathrm{Out}(\pi _1 (X \times_{\Spec K} \Spec K^{\sep},\overline{s})^{l})$ is trivial.
Therefore, $X$ has good reduction by Proposition \ref{3odatama}.
\end{proof}

\begin{rem}
In Proposition \ref{3odatama} and Proposition \ref{3gr}, to prove the implication $1 \Rightarrow 2$, we only need the hypothesis that the morphism $X \rightarrow \text{Spec}\,K$ is proper, smooth, and geometrically connected.
We show this assertion in the first part of the proof of Theorem \ref{main,thm}.1 and Theorem \ref{open,main,thm}.1.
\label{3rem}
\end{rem}

\label{repsect}

\section{First reduction}
In this section, we give a precise definition of a hyperbolic polycurve and the first step of the proof of Theorem \ref{main,thm}, \ref{open,main,thm}, and \ref{higher,main}.
Let $K, O_{K}, p, K^{\rm sep}, G_K,$ and $I_{K}$ be as in Section \ref{intro}.
Let $O^{\mathrm{h}}_{K}$ (resp.\,$O^{\mathrm{sh}}_{K}$) be the henseliazation (resp.\,strict henseliazation) of $O_{K}$ contained in $K^{\mathrm{sep}}$ defined by $I_{K}$. 

\begin{dfn}
Let $S$ be a scheme and $X$ a scheme over $S$.
\begin{enumerate}
\item
We shall say that $X$ is a {\it hyperbolic polycurve} (of relative dimension $n$) over $S$ if
there exists a (not necessarily unique) factorization of the structure morphism $X \rightarrow S$
\begin{equation}
\mathcal{S}: X = X_{n} \to X_{n-1} \to \ldots \to X_{1} \to X_{0} = S
\end{equation}
such that, for each $i \in \{ 1, \ldots ,n \}$, there exists a hyperbolic curve $(\overline{X}_{i}, D_{i})$ over $X_{i-1}$ (cf.\,Definition \ref{hyp.curve}) and the scheme $\overline{X}_{i} \setminus D_{i}$ is isomorphic to $X_{i}$ over $X_{i-1}$.
We refer to the above factorization of $X \rightarrow S$ as a {\it sequence of parameterizing morphisms}.
In the case where we consider a pair of a hyperbolic polycurve $X$ over $S$ and a sequence of parametrizing morphisms $\mathcal{S}$ of $X$, we write $(X, \mathcal{S})$.
We refer to such a pair as a {\it hyperbolic polycurve with a sequence of parametrizing morphisms}.
We shall say that two hyperbolic polycurves (over $S$) with a sequence of parametrizing morphisms $(X, \mathcal{S})$ and $(X', \mathcal{S'})$ are isomorphic if there exists an $S$-isomorphism between hyperbolic polycurves of relative dimension $i$ over $S$ defined by $\mathcal{S}$ and $\mathcal{S}'$ for each $1 \leq i \leq n$ such that these isomorphisms are compatible with the sequence of parametrizing morphisms $\mathcal{S}$ and $\mathcal{S}'$.
\item For a hyperbolic polycurve $X$ over $S$, the following are equivalent:
\begin{description} 
\item{(a)} The structure morphism $X \to S$ is proper.
\item{(b)} For any sequence of parameterizing morphisms
$$X = X_{n} \rightarrow X_{n-1} \rightarrow \ldots \rightarrow X_{1} \rightarrow X_{0} = S,$$ 
the morphism $X_{i} \rightarrow X_{i-1}$ is proper for each $1\leq i \leq n$.
\item{(c)} There exists a sequence of parameterizing morphisms
$$X = X_{n} \rightarrow X_{n-1} \rightarrow \ldots \rightarrow X_{1} \rightarrow X_{0} = S$$ 
such that the morphism $X_{i} \rightarrow X_{i-1}$ is proper for each $1\leq i \leq n$.
\end{description}
We call such $X \rightarrow S$ a {\it proper hyperbolic polycurve}.

\item Let $X$ be a hyperbolic polycurve (resp.\,proper hyperbolic polycurve) of relative dimension $n$ over $S$.
For a sequence of parameterizing morphisms
\begin{equation}
\mathcal{S} : X = X_{n} \rightarrow X_{n-1} \rightarrow \ldots \rightarrow X_{1} \rightarrow X_{0} = S,
\end{equation}
we write $b_{\mathcal{S}}$ (resp.\,$g_{\mathcal{S}}$) for the maximum of the first Betti numbers (resp.\,the genera) of fibers of all the morphisms $X_{i} \rightarrow X_{i-1}$ and refer to $b_{\mathcal{S}}$ (resp.\,$g_{\mathcal{S}}$)  {\it the maximal first Betti number} (resp.\,{\it the maximal genus}) of $\mathcal{S}$.
We write for $b_{X}$ (resp.\,$g_{X}$) the minimum of the maximal first Betti numbers (resp.\,the maximal genera) of sequences of parameterizing morphisms of $X$ and refer to $b_{X}$ (resp.\,$g_{X}$) as {\it the maximum first Betti number} (resp.\,{\it the maximum genus}) of $X$. 
\end{enumerate}
\label{hyperbolicpoly}
\end{dfn}

\begin{note-prop}
Let $1\to N \to G \to Q \to1$ be an exact sequence of profinite groups and $l\neq p$ a prime number.
Then $\mathrm{Ker}\,(N\to N^{p'})$ (resp.\,$\mathrm{Ker}\,(N\to N^{l})$) is a characteristic subgroup of $N$.
We write $G^{(p')}$ (resp.\,$G^{(l)}$) for the quotient group $G/\mathrm{Ker}\,(N\to N^{p'})$ (resp.\,$G/\mathrm{Ker}\,(N\to N^{l})$).
Moreover, we have the following commutative diagram with exact horizontal lines:
\begin{equation}
\xymatrix@=10pt{ 
1 \ar[r]
& N \ar[d] \ar[r] 
&G \ar[d] \ar[r]
&Q \ar[d] \ar[r]
& 1
&(\text{resp.}
&1 \ar[r]
& N \ar[d] \ar[r] 
&G \ar[d] \ar[r]
&Q \ar[d] \ar[r]
& 1\\
1 \ar[r]
&N^{p'} \ar[r]
&G^{(p')} \ar[r]
&Q\ar[r]
& 1
&
&1 \ar[r]
&N^{l} \ar[r]
&G^{(l)} \ar[r]
&Q\ar[r]
& 1).
}
\end{equation}
\label{prop'}
\end{note-prop}

\begin{proof}
Since every continuous homomorphism from $N$ to a pro-$p'$ profinite group factors $N\to N^{p'}$ (resp.\,$N\to N^{l}$), $\mathrm{Ker}\,(N\to N^{p'})$ (resp.\,$\mathrm{Ker}\,(N\to N^{l})$) is a characteristic subgroup of $N$.
\end{proof}

We start the proof of Theorem \ref{main,thm}, \ref{open,main,thm}, and \ref{higher,main}.

\begin{proof}[Proofs of Theorem \ref{main,thm}.1 and Therorem \ref{open,main,thm}.1 (cf.\,Remark \ref{3rem})]
Suppose that (A) holds.
Take a smooth model $\mathfrak{X}$ of $X$ (resp.\,the scheme $\mathfrak{X}$ in the condition (B)) if we are in the situation of Theorem \ref{main,thm}.1 (resp.\,Therorem \ref{open,main,thm}.2).
We have the following diagram:
\[
\xymatrix@=10pt{
X \times_{\Spec K}\Spec K^{\mathrm{sep}} \ar[d] \ar[r] & X\times_{\Spec K}\Spec(\mathrm{Frac}\,O^{\mathrm{h}}_{K}) \ar[d] \ar[r] 
& \mathrm{Spec}\,(\mathrm{Frac}\,O^{\mathrm{h}}_{K}) \ar[d] & \mathrm{Spec}\,K^{\mathrm{sep}} \ar[l] \ar[d] \\
\mathfrak{X}\times_{\Spec O_{K}}\Spec O^{\mathrm{sh}}_{K} \ar[r] & \mathfrak{X}\times_{\Spec O_{K}} \Spec O^{\mathrm{h}}_{K} \ar[r]
& \mathrm{Spec}\,O^{\mathrm{h}}_{K}
& \mathrm{Spec}\,O^{\mathrm{sh}}_{K}. \ar[l]
}
\]
Take a geometric point $\overline{\eta}$ of $X\times_{\Spec K}\Spec K^{\mathrm{sep}}$.
By Notation-Proposition \ref{prop'}, {\cite[Expos\'e IX, Theorem 6.1]{SGA1}}, and the same argument as the proof of  {\cite[Expos\'e IX, Theorem 6.1]{SGA1}}, we have the following commutative diagram of profinite groups with exact horizontal lines:
\begin{equation}
\xymatrix@=10pt{ 
1 \ar[r] & \pi _{1}(X\times_{\Spec K}\Spec K^{\mathrm{sep}},\overline{\eta})^{p'} \ar[d] \ar[r] 
& \pi _{1}(X\times_{\Spec K}\Spec(\mathrm{Frac}\,O^{\mathrm{h}}_{K}),\overline{\eta})^{(p')} \ar[d] \ar[r]
& \mathrm{Gal}(K^{\mathrm{sep}}/\mathrm{Frac}\,O^{\mathrm{h}}_{K}) \ar[d] \ar[r] & 1\\
1 \ar[r] & \pi _{1}(\mathfrak{X}\times_{\Spec O_{K}}\Spec O^{\mathrm{sh}}_{K},\overline{\eta})^{p'} \ar[r]
& \pi _{1}(\mathfrak{X}\times_{\Spec O_{K}}\Spec O^{\mathrm{h}}_{K},\overline{\eta})^{(p')} \ar[r]
& \pi _{1}(\mathrm{Spec}\,O^{\mathrm{h}}_{K},\overline{\eta}) \ar[r] & 1.
}
\label{syusei1}
\end{equation}
Since the first vertical arrow in the diagram (\ref{syusei1}) is an isomorphism by Theorem \ref{isom2} and the action $I_K$ on $\pi _{1}(\mathfrak{X}\times_{\Spec O_{K}}O^{\mathrm{sh}}_{K},\overline{\eta})^{p'}$ is trivial by the above diagram, the action of $I_K$ on $\pi _{1}(X\times_{\Spec K} K^{\mathrm{sep}},\overline{\eta})^{p'}$ is also trivial.
Hence, we finished the proof of Theorem \ref{main,thm}.1 and \ref{open,main,thm}.1.
\end{proof}

To prove Theorem \ref{higher,main} and the rest of the assertions in Theorem \ref{main,thm} and \ref{open,main,thm}, we need the following proposition:

\begin{prop}
Let $T$ be a regular integral separated scheme.
Let $Y$ be a scheme over $T$ satisfying the following condition:
There exists a factorization
$$Y = Y_{n} \rightarrow \ldots \rightarrow Y_{0} = \Spec O_{K}$$
such that there exist a proper smooth morphism $\overline{Y}_{i+1} \rightarrow Y_{i}$ with geometrically connected fibers and a normal crossing divisor $E_{i+1} \subset \overline{Y}_{i+1}$ of the scheme $\overline{Y}_{i+1}$ relative to $Y_{i}$ satisfying that the complement $\overline{Y}_{i+1} \setminus E_{i+1}$ is isomorphic to $Y_{i+1}$ for each $0 \leq i \leq n-1$.
Suppose that $T$ is a scheme over $\Z_{(p)}$ (resp.\,$\Q$) if $p> 0$ (resp.\,$p=0$).
Let $\eta = \mathrm{Spec}\,K(T)$ be the generic point of $T$, $K(T)^{\mathrm{sep}}$ a separable closure of $K(T)$, $\overline{\eta}$ the scheme $\Spec K(T)^{\mathrm{sep}}$, and $s$ a geometric point of $Y \times_{T} \overline{\eta}$.
By {\cite[Expos\'e IX, Theorem 6.1]{SGA1}} and {\cite[Theorem 0.2]{Nag3}}, we have the following commutative diagram with exact horizontal lines:

\[
\xymatrix{ 
1 \ar[r] & \pi_{1}(Y \times_{T} \overline{\eta}, s) \ar[d] \ar[r] 
& \pi_{1}(Y \times_{T} \eta, s) \ar[d] \ar[r]
& \pi_{1}(\eta, s) \ar[d] \ar[r] & 1\\
& \pi_{1}(Y \times_{T} \overline{\eta}, s) \ar[r]
& \pi_{1}(Y, s) \ar[r]
& \pi_{1}(T, s) \ar[r] & 1.
}
\]
Then the conjugate action of $\pi_{1}(Y \times_{T} \eta, s)$ on $\pi_{1}(Y \times_{T} \overline{\eta}, s)$ induces a natural action $\pi_{1}(Y, s) \rightarrow \mathrm{Aut}(\pi_{1}(Y \times_{T} \overline{\eta}, s)^{p'})$. Thus, we also obtain a natural outer action $\pi_{1}(T, s) \rightarrow \mathrm{Out}(\pi_{1}(Y \times_{T} \overline{\eta}, s)^{p'}).$
 
\label{outact}
\end{prop}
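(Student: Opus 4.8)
Here is how I would approach proving Proposition \ref{outact}.

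The plan is to build the two actions by transporting the conjugation action to the pro-$p'$ level and descending it along the two exact rows of the displayed diagram; the bulk of the argument is formal, with a single substantial input which I identify below. Write $\Delta=\pi_{1}(Y\times_{T}\overline{\eta},s)$, $\Pi_{\eta}=\pi_{1}(Y\times_{T}\eta,s)$, $\Pi=\pi_{1}(Y,s)$ and $P=\mathrm{Im}(\Delta\to\Pi)$, which by the diagram equals $\mathrm{Ker}(\Pi\to\pi_{1}(T,s))$, so that $1\to P\to\Pi\to\pi_{1}(T,s)\to1$ is exact. Since $\mathrm{Ker}(\Delta\to\Delta^{p'})$ is characteristic in $\Delta$ (Notation-Proposition \ref{prop'}), the conjugation action of $\Pi_{\eta}$ on its normal subgroup $\Delta$ preserves this kernel and so induces $\Pi_{\eta}\to\mathrm{Aut}(\Delta^{p'})$. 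Applying Notation-Proposition \ref{prop'} to the top row and to $1\to P\to\Pi\to\pi_{1}(T,s)\to1$ yields exact sequences $1\to\Delta^{p'}\to\Pi_{\eta}^{(p')}\to\pi_{1}(\eta,s)\to1$ and $1\to P^{p'}\to\Pi^{(p')}\to\pi_{1}(T,s)\to1$; the homomorphism just constructed factors as $\Pi_{\eta}\twoheadrightarrow\Pi_{\eta}^{(p')}\xrightarrow{\bar\rho}\mathrm{Aut}(\Delta^{p'})$ with $\bar\rho$ the conjugation action on the normal subgroup $\Delta^{p'}$; and by functoriality of pro-$p'$ completion $v\colon\Pi_{\eta}\to\Pi$ induces $v^{(p')}\colon\Pi_{\eta}^{(p')}\to\Pi^{(p')}$. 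One also checks that $v$, and hence $v^{(p')}$, is surjective, since $Y\times_{T}\eta$ is the filtered intersection of the dense open subschemes $Y\times_{T}U$ ($U\subseteq T$ open nonempty) of the integral scheme $Y$.

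Granting that the natural surjection $\Delta^{p'}\to P^{p'}$ is an isomorphism (discussed next), the composite $\Delta^{p'}\hookrightarrow\Pi_{\eta}^{(p')}\xrightarrow{v^{(p')}}\Pi^{(p')}$ is the injection $\Delta^{p'}\xrightarrow{\sim}P^{p'}\hookrightarrow\Pi^{(p')}$, so the closed normal subgroups $\Delta^{p'}$ and $\mathrm{Ker}(v^{(p')})$ of $\Pi_{\eta}^{(p')}$ have trivial intersection; as two normal subgroups with trivial intersection centralize one another, $\bar\rho$ is trivial on $\mathrm{Ker}(v^{(p')})$ and therefore factors through $\Pi_{\eta}^{(p')}/\mathrm{Ker}(v^{(p')})\xrightarrow{\sim}\Pi^{(p')}$. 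Precomposing the resulting $\rho'\colon\Pi^{(p')}\to\mathrm{Aut}(\Delta^{p'})$ with $\Pi\twoheadrightarrow\Pi^{(p')}$ gives the asserted action $\Pi\to\mathrm{Aut}(\Delta^{p'})$, and unwinding the construction shows it is the one induced by the conjugation action of $\Pi_{\eta}$. Finally $\rho'$ restricted to the normal subgroup $P^{p'}\cong\Delta^{p'}$ consists of inner automorphisms, so composing with $\mathrm{Aut}(\Delta^{p'})\to\mathrm{Out}(\Delta^{p'})$ kills $P^{p'}$ and yields $\pi_{1}(T,s)=\Pi^{(p')}/P^{p'}\to\mathrm{Out}(\Delta^{p'})$, as required.

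The substance, and what I expect to be the main obstacle, is the isomorphism $\Delta^{p'}\xrightarrow{\sim}P^{p'}$, i.e. $\mathrm{Ker}(\Delta\to\Pi)\subseteq\mathrm{Ker}(\Delta\to\Delta^{p'})$, equivalently the left exactness of the pro-$p'$ completed lower homotopy sequence (every connected finite \'etale cover of $Y\times_{T}\overline{\eta}$ of degree prime to $p$ is detected in $\pi_{1}(Y)$). I would obtain this from \cite[Theorem 0.2]{Nag3} and Theorem \ref{isom2} by a codimension-one argument: such a cover descends to $Y\times_{T}\eta'$ for a finite separable $\eta'/\eta$ and spreads out over a dense open of the normalization $T'$ of $T$ in $K(\eta')$; across each of the finitely many remaining codimension-one points $v'$ of $T'$ the scheme $Y\times_{T}\Spec\mathcal{O}^{\mathrm{sh}}_{T',v'}$ is smooth with normal-crossings boundary over the strictly henselian discrete valuation ring $\mathcal{O}^{\mathrm{sh}}_{T',v'}$, whose residue characteristic is $0$ or $p$ because $T$ lies over $\Q$ or $\Z_{(p)}$, so a cover of degree prime to $p$ is tame there and extends by the pro-$p'$ specialization isomorphism (Theorem \ref{isom2}, exactly as in the proof of Theorem \ref{main,thm}.1); Zariski--Nagata purity, applied where $T$ (hence $Y\times_{T}T'$) is regular in codimension one, then assembles these extensions. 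This step is precisely where the hypotheses that $T$ is regular and lies over $\Z_{(p)}$ (resp. $\Q$) and that $Y\to T$ is built from proper smooth morphisms with normal-crossings complements enter; if \cite[Theorem 0.2]{Nag3} already provides the $(p')$-completed homotopy sequence as exact, this input may simply be quoted and the rest of the proof is the formal manipulation above.
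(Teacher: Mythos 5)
Your formal skeleton (transporting the conjugation action to $\Pi_{\eta}^{(p')}$, noting that $\mathrm{Ker}(v^{(p')})$ and $\Delta^{p'}$ are normal with trivial intersection, hence commute, and then dividing out $P^{p'}$ to get the outer action) is fine, and it is in fact the same trick the paper uses. But the entire weight of your argument rests on the ``key claim'' that $\Delta^{p'}\to P^{p'}$ is an isomorphism, i.e.\ $\mathrm{Ker}(\Delta\to\Pi)\subseteq\mathrm{Ker}(\Delta\to\Delta^{p'})$, and this is where the proposal has a genuine gap. That claim is strictly stronger than the proposition itself: it is a global left-exactness statement, saying that every connected finite \'etale cover of $Y\times_{T}\overline{\eta}$ of degree prime to $p$ is dominated by the pullback of a finite \'etale cover of all of $Y$. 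It is not contained in the cited inputs ({\cite[Theorem 0.2]{Nag3}} only gives the right-exact lower row, and Theorem \ref{isom2} only treats a strictly henselian trait), and it is precisely the kind of pro-$p'$ exactness that the paper goes out of its way to avoid when $p>0$. Your sketch of it does not go through for two concrete reasons: (a) the assembly step invokes Zariski--Nagata purity on $Y\times_{T}T'$, but $T'$, the normalization of $T$ in a finite extension of $K(T)$, is only normal, not regular, and purity requires regularity of the ambient scheme (``regular in codimension one'' is not enough); (b) even where the cover does extend after a finite base change $T''\to T$, Abhyankar's lemma forces that base change to be ramified over codimension-one points of $T$ in general, so what you produce is an \'etale cover of $Y\times_{T}T''$, not of $Y$, and such a cover does not show that the open subgroup of $\Delta$ you started from contains $\mathrm{Ker}(\Delta\to\Pi)$; there is no map comparing $\mathrm{Ker}(\Pi_{\eta}\to\Pi)$ with $\mathrm{Ker}(\pi_{1}(Y\times_{T}\eta'',s)\to\pi_{1}(Y\times_{T}T'',s))$ in the direction you need.

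The paper's proof needs much less than your key claim. Since $Y$ is regular, $\mathrm{Ker}(\Pi_{\eta}\to\Pi)$ is topologically normally generated by inertia subgroups at the codimension-one points of $Y$ outside the generic fiber; by flatness these lie over codimension-one points $t$ of $T$, and each such inertia subgroup is contained in $\pi_{1}(Y\times_{T}\eta^{\mathrm{sh}},s)$ for the strict henselization $O_{T,t}^{\mathrm{sh}}$, whose residue characteristic is $0$ or $p$ by the hypothesis that $T$ lies over $\Q$ or $\Z_{(p)}$. This reduces the statement to the trait case, where Theorem \ref{isom2} supplies exactly the injectivity you were missing, namely $\Delta^{p'}\xrightarrow{\ \sim\ }\pi_{1}(Y\times_{T}\Spec O_{T,t}^{\mathrm{sh}},s)^{p'}$; the same ``two normal subgroups with trivial intersection centralize each other'' argument then shows each inertia subgroup acts trivially on $\Delta^{p'}$, and since the kernel of $\Pi_{\eta}\to\mathrm{Aut}(\Delta^{p'})$ is a closed normal subgroup containing all of them, the action factors through $\Pi$. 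So the fix is to run your formal argument only after localizing at codimension-one points of $T$, as the paper does, rather than trying to establish first the global injectivity $\Delta^{p'}\hookrightarrow P^{p'}$, which is both unproven and unnecessary for the proposition.
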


\begin{proof}

We need to show that the action $\pi_{1}(Y \times_{T} \eta, s) \rightarrow \mathrm{Aut}(\pi_{1}(Y \times_{T} \overline{\eta}, s)^{p'})$ factors through $\pi_{1}(Y \times_{T} \eta, s) \rightarrow \pi_{1}(Y, s).$

First, we show that we can reduce to the case where the scheme $T$ is the spectrum of a strictly henselian discrete valuation ring. Since the scheme $Y$ is regular, the kernel of the morphism $\pi_{1}(Y \times_{T} \eta, s) \rightarrow \pi_{1}(Y, s)$ is generated by inertia subgroups at points of codimension $1$ in $Y \setminus (Y \times_{T} \eta)$.
Consider such a point $y$ and write $t$ for the image of $y$ in $T$.
Since $Y$ is flat over $T$, $t$ is codimension $1$ in $T$. 
Choose a strict henselization $O_{T,t}^{\mathrm{sh}} \subset K(T)^{\mathrm{sep}}$ of the discrete valuation ring $O_{T,t}$ and write $\eta^{\mathrm{sh}}$ for the scheme $\mathrm{Spec}\,\mathrm{Frac}(O_{T,t}^{\mathrm{sh}})$.
Then the Galois group $\mathrm{Gal}(K(T)^{\mathrm{sep}} / \mathrm{Frac}(O_{T,t}^{\mathrm{sh}}))$ is an inertia subgroup of $t$ in $\pi_{1}(\eta, s)$.
Fix a separable closure $K(Y\times_{T} \overline{\eta})^{\mathrm{sep}}$ of the function field of $Y\times_{T} \overline{\eta}$.
Take a strict henselization $O_{Y,y}^{\mathrm{sh}}$ of the local ring $O_{Y,y}$ such that the diagram
\[
\xymatrix{ 
\mathrm{Spec}\,K(Y\times_{T} \overline{\eta})^{\mathrm{sep}} \ar[d] \ar[r] & \Spec O_{Y,y} \times_{T} \overline{\eta} \ar[d] \\
\mathrm{Spec}\,O_{Y,y}^{\mathrm{sh}} \ar[r] & \mathrm{Spec}\,O_{Y,y} \times_{T} \mathrm{Spec}\, O_{T,t}^{\mathrm{sh}}
}
\]
commutes.
Then the inertia subgroup of $y$ in $\pi_{1}(Y \times_{T} \eta, s)$ associated to $\mathrm{Spec}\,O_{Y,y}^{\mathrm{sh}}$ is sent into $\mathrm{Gal}(K(T)^{\mathrm{sep}} / \mathrm{Frac}(O_{T,t}^{\mathrm{sh}})) \subset \pi_{1}(\eta, s)$. 
Therefore, we have a commutative diagram

\[
\xymatrix{
1 \ar[r] & \pi_{1}(Y \times_{T} \overline{\eta}, s) \ar@{=}[d] \ar[r] 
& \pi_{1}(Y \times_{T} \eta^{\mathrm{sh}}, s) \ar@{^{(}-_>}[d] \ar[r]
& \pi_{1}(\eta^{\mathrm{sh}}, s) \ar@{^{(}-_>}[d] \ar[r] & 1\\
1 \ar[r] & \pi_{1}(Y \times_{T} \overline{\eta}, s) \ar[r] 
& \pi_{1}(Y \times_{T} \eta, s) \ar[r]
& \pi_{1}(\eta, s) \ar[r] & 1.
}
\]
The inertia subgroup of $y$ in $\pi_{1}(Y \times_{T} \eta, s)$ associated to $O_{Y,y}^{\mathrm{sh}}$ is contained in $\pi_{1}(Y \times_{T} \eta^{\mathrm{sh}}, s)$ and coincides with the inertia subgroup in $\pi_{1}(Y \times_{T} \eta^{\mathrm{sh}}, s)$ associated to the strict localization $\mathrm{Spec}\,O_{Y,y}^{\mathrm{sh}} \rightarrow Y\times_{T}\mathrm{Spec}\,O_{T,t}^{\mathrm{sh}}$.
Thus, to prove this proposition, it suffices to show that the homomorphism $\pi_{1}(Y\times_{T}\eta^{\mathrm{sh}},s) \rightarrow \mathrm{Aut}(\pi_{1}(Y\times_{T}\overline{\eta})^{p'})$ factors through $\pi_{1}(Y\times_{T}\eta^{\mathrm{sh}},s) \rightarrow \pi_{1}(Y\times_{T}\mathrm{Spec}\,O_{T,t}^{\mathrm{sh}},s)$.
Therefore, we can reduce \ref{outact} to the case where $T$ is the spectrum of a strictly henselian discrete valuation ring.

Suppose that $T$ is the spectrum of a strictly henselian discrete valuation ring.
We have a commutative diagram with an exact horizontal line
\[
\xymatrix{ 
1 \ar[r] & \pi_{1}(Y \times_{T} \overline{\eta}, s)^{p'} \ar[r] \ar[d] 
& \pi_{1}(Y \times_{T} \eta, s)^{(p')} \ar[ld] \ar[r]
& \pi_{1}(\eta, s) \ar[r] & 1\\
& \pi_{1}(Y, s)^{p'}.
}
\]
by Notation-Proposition \ref{prop'}.
Since the specialization homomorphism $\pi_{1}(Y \times_{T} \overline{\eta}, s)^{p'} \rightarrow \pi_{1}(Y, s)^{p'}$ is an isomorphism by Theorem \ref{isom2}, the assertions follow.
\end{proof}

We start the proof of Theorem \ref{higher,main} and the rest of the assertions in Theorem \ref{main,thm} and \ref{open,main,thm}.
When we are in the situation of Theorem \ref{main,thm}, fix a sequence of parameterizing morphisms
$$
X = X_{n} \overset{f_{n}}{\rightarrow} X_{n-1} \overset{f_{n-1}}{\rightarrow}  \ldots 
\overset{f_{i+1}}{\rightarrow} X_{i} \overset{f_{i}}{\rightarrow} X_{i-1} 
\overset{f_{i-1}}{\rightarrow} \ldots \overset{f_{2}}{\rightarrow} X_{1}
\overset{f_{1}}{\rightarrow} X_{0} = \mathrm{Spec}\,K
$$
of $X \rightarrow \mathrm{Spec}\,K$ whose maximal genus is $g_{X}$ and write $b$ for $2g_{X}$.
When we are in the situation of Theorem \ref{open,main,thm} or \ref{higher,main}, we write $b$ for $b_{\mathcal{S}}$.
Take a geometric point $\ast$ of the scheme $X \times_{\mathrm{Spec}\, K} \Spec K^{\mathrm{sep}}$.
Write $\Delta_{i}$ (resp.\,$\Pi_{i}$) for the \'etale fundamental group $\pi _1 (X_{i}  \times_{\mathrm{Spec}\, K} \Spec K^{\mathrm{sep}} ,\ast)$ (resp.\,$\pi _1 (X_{i} ,\ast)$).
By  {\cite[Expos\'e IX, Theorem 6.1]{SGA1}}, we have the following homotopy exact sequences of profinite groups
\begin{equation}
1 \to \Delta_{i} \to \Pi_{i} \to G_{K} \to 1.
\label{iexact}
\end{equation}

Assume that (B) or the conditions in Theorem \ref{main,thm}.4 holds.
Under the hypotheses given in Theorem \ref{main,thm} and Theorem \ref{open,main,thm}, we will show that (A) holds by induction on $n$.
For $n = 1$, this is proved in Proposition \ref{3odatama} and \ref{3gr}.
Hence, we assume that $n \geq 2$.

First, we prove that (A) holds for $X_{n-1}$.
Since the morphism $X_{n-1} \rightarrow \mathrm{Spec}\,K$ has a structure of a hyperbolic polycurve associated with the above sequence of parameterizing morphisms, we can apply the main theorems by induction hypothesis if we show that the outer Galois representation
\begin{equation}
I_{K} \rightarrow \mathrm{Out}(\Delta_{n-1}^{p'})
\label{induction}
\end{equation}
is trivial. 
Since $X_{n} \rightarrow X_{n-1}$ has geometrically connected fibers, the homomorphism $\Delta_{n} \rightarrow \Delta_{n-1}$ is surjective.
Then we can show that the outer Galois representation (\ref{induction}) is trivial by using the commutative diagram of profinite groups with exact horizontal lines
\[
\xymatrix{ 
1 \ar[r] & \Delta_{n} \ar@{->>}[d] \ar[r] 
& \Pi_{n} \ar@{->>}[d] \ar[r]
& G_{K} \ar@{=}[d] \ar[r] & 1\\
1 \ar[r] & \Delta_{n-1} \ar[r]
& \Pi_{n-1} \ar[r]
& G_{K} \ar[r] & 1.
}
\]

When we are in the situation of Theorem \ref{main,thm}, we may assume that there exists a smooth model $\mathfrak{X}_{n-1}$ of $X_{n-1}$.
When we are in the situation of Theorem \ref{open,main,thm}, we may assume that there exists a hyperbolic polycurve
\begin{equation}
\mathfrak{X}_{n-1} \rightarrow \ldots \rightarrow \mathfrak{X}_{1} \rightarrow \mathfrak{X}_{0} = \mathrm{Spec}\,O_{K}
\end{equation}
whose generic fiber is isomorphic to the sequence of parameterizing morphisms
\begin{equation}
X_{n-1} \overset{f_{n-1}}{\rightarrow} \ldots 
\overset{f_2}{\rightarrow} X_1 
\overset{f_1}{\rightarrow} \mathrm{Spec}\,K .
\end{equation}
Consider the following diagram:
\[
\xymatrix{
X_{n}\times_{X_{n-1}} \mathrm{Spec}\,K(X_{n-1}) \ar[r] \ar[d]^{f'_{n}} & X_{n} \ar[d]^{f_{n}} \ar@{.>}[r]
&  \ar@{.>}[d]
& \ar@{.>}[l] \ar@{.>}[d] \\
\mathrm{Spec}\,K(X_{n-1}) \ar[r] & X_{n-1} \ar[r]
& \mathfrak{X}_{n-1}
& \mathrm{Spec}\,O_{\mathfrak{X}_{n-1},\xi} , \ar[l]
}
\]
where $K(X_{n-1})$ is the function field of $X_{n-1}$, $\xi$ is the generic point of the special fiber
$\mathfrak{X}_{n-1} \setminus  X_{n-1}$, $f'_{n}$ is the base change of $f_{n}$, and $O_{\mathfrak{X}_{n-1},\xi}$ is the local ring of $\mathfrak{X}_{n-1}$ at $\xi$.
It suffices to show that there exists a hyperbolic curve $(\overline{\mathfrak{X}}_{n},\mathfrak{D_{n}})$ over $\mathfrak{X}_{n-1}$ such that the scheme $(\overline{\mathfrak{X}}_{n}\setminus\mathfrak{D}_{n})\times_{\mathfrak{X}_{n-1}}X_{n-1}$ is isomorphic to $X_{n}$ over $X_{n-1}$.
By Proposition \ref{mor} (cf.\,\cite{Mor}), it suffices to show that there exists a hyperbolic curve $(\overline{\mathfrak{X}}, \mathfrak{D})$ over $\mathrm{Spec}\,O_{\mathfrak{X}_{n-1},\xi}$ such that the scheme $(\overline{\mathfrak{X}} \setminus \mathfrak{D})\times_{\mathrm{Spec}\,O_{\mathfrak{X}_{n-1},\xi}} \mathrm{Spec}\,K(X_{n-1})$ is isomorphic to $X_{n}\times_{X_{n-1}} \mathrm{Spec}\,K(X_{n-1})$ over $\mathrm{Spec}\,K(X_{n-1})$.
Let $I_{\xi} \subset G_{K(X_{n-1})}$ be an inertia subgroup $\xi$ and $\overline{t}$ a geometric point of $X_{n}\times _{X_{n-1}} \mathrm{Spec}\,K(X_{n-1})$.
Write $\Delta_{n,n-1}$ for the \'etale fundamental group $\pi _{1}(X_{n}\times _{X_{n-1}}\overline{t},\overline{t})$.
To complete the proof, it suffices to show that the outer representation of $I_{\xi}$ on $\Delta_{n,n-1}^{l}$ is trivial for some prime number $l \neq p$ by Proposition \ref{3odatama}.
Note that the homomorphism
$$I_{\xi} \rightarrow \mathrm{Out}\,\Delta_{n,n-1}^{l}$$
is the composite of the natural morphisms
\begin{equation}
I_{\xi} \rightarrow G_{K(X_{n-1})} \rightarrow \pi _{1}(X_{n-1},\overline{t})
\label{inertia}
\end{equation}
and the outer representation $\pi _{1}(X_{n-1},\overline{t}) \rightarrow \mathrm{Out}\,\Delta_{n,n-1}^{l}$ constructed in Proposition \ref{outact}.
Write $I$ for the image of the inertia subgroup $I_{\xi}$ in (\ref{inertia}) to $\pi _{1}(X_{n-1},\overline{t})$, which coincides with an inertia subgroup of $\pi _{1}(X_{n-1},\overline{t})$ at $\xi$.
Therefore, we have
$$\mathrm{Ker}(\Pi_{n-1} \rightarrow \pi_{1}(\mathfrak{X}_{n-1}, \overline{t})) = (\text{the subgroup topologically normally generated by} \, I).$$
Then we have the following proposition in summary:

\begin{prop} 
To prove Theorem \ref{main,thm}.2, 3, 4 and Theorem \ref{open,main,thm}.2, 3, it suffices to show that the outer action
$I \rightarrow \mathrm{Out}\,\Delta_{n,n-1}^{l}$
is trivial for some prime number $l \neq p$.
\label{claim}
\end{prop}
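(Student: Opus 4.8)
The proof is a matter of collecting the reductions carried out above into a single chain of implications, organized as an induction on $n$; the base case $n=1$ is Propositions~\ref{3odatama} and~\ref{3gr}, so I may assume $n\geq 2$. The first task is to produce (A) for $X_{n-1}$. Each of the hypotheses in the assertions descends along the top fibration $f_{n}$: from the $G_{K}$-compatible surjection $\Delta_{n}\twoheadrightarrow\Delta_{n-1}$ (which induces $\Delta_{n}^{p'}\twoheadrightarrow\Delta_{n-1}^{p'}$ and a morphism of the exact sequences~\eqref{iexact}) one reads off that triviality of the outer $I_{K}$-action on $\Delta_{n}^{p'}$ forces triviality of the outer $I_{K}$-action on $\Delta_{n-1}^{p'}$ --- a lift of $\sigma\in I_{K}$ to $\Pi_{n}$ centralizing $\Delta_{n}^{p'}$ maps to a lift of $\sigma$ in $\Pi_{n-1}$ centralizing $\Delta_{n-1}^{p'}$ --- and, in the situation of Theorem~\ref{main,thm}.4, the image $x'=f_{n}(x)$ of the rational point together with the compatibility of the associated sections carries the triviality of the $I_{K}$-action on $\pi_{1}(X_{n}\times_{\Spec K}\Spec K^{\sep},\overline{x})^{p'}$ down to $\pi_{1}(X_{n-1}\times_{\Spec K}\Spec K^{\sep},\overline{x'})^{p'}$. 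Since truncating $\mathcal{S}$ yields a sequence of parameterizing morphisms of $X_{n-1}$ with maximal genus $\leq g_{X}$ (resp.\ maximal first Betti number $\leq b_{\mathcal{S}}$), the induction hypothesis then applies and provides a smooth model $\mathfrak{X}_{n-1}$ (resp.\ a hyperbolic polycurve $\mathfrak{X}_{n-1}\to\cdots\to\Spec O_{K}$).

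With $\mathfrak{X}_{n-1}$ in hand, the second task is to extend the top fibration. To extend $X_{n}\to X_{n-1}$ to a hyperbolic curve over $\mathfrak{X}_{n-1}$ it suffices, by Proposition~\ref{mor}, to extend the base change $X_{n}\times_{X_{n-1}}\Spec K(X_{n-1})$ to a hyperbolic curve over the discrete valuation ring $O_{\mathfrak{X}_{n-1},\xi}$, where $\xi$ is the generic point of the special fiber $\mathfrak{X}_{n-1}\setminus X_{n-1}$. By the Oda--Tamagawa criterion (Proposition~\ref{3odatama}), this holds as soon as the outer action of an inertia subgroup $I_{\xi}\subset G_{K(X_{n-1})}$ on $\Delta_{n,n-1}^{l}$ is trivial for some prime $l\neq p$. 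By Proposition~\ref{outact} this outer action factors as $I_{\xi}\to G_{K(X_{n-1})}\to\pi_{1}(X_{n-1},\overline{t})\to\mathrm{Out}\,\Delta_{n,n-1}^{l}$, and the image of $I_{\xi}$ in $\pi_{1}(X_{n-1},\overline{t})$ is precisely the inertia subgroup $I$ at $\xi$, which topologically normally generates $\mathrm{Ker}(\Pi_{n-1}\to\pi_{1}(\mathfrak{X}_{n-1},\overline{t}))$. Concatenating these steps gives exactly the assertion: triviality of $I\to\mathrm{Out}\,\Delta_{n,n-1}^{l}$ suffices.

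The genuine difficulty is deliberately placed outside this proposition: it is the verification, to be carried out in later sections under the hypotheses on $p$, $g_{X}$ and $b_{\mathcal{S}}$, that $I\to\mathrm{Out}\,\Delta_{n,n-1}^{l}$ really is trivial --- via the decomposition $\Pi_{n-1}\cong\Delta_{n-1}\times Z_{\Pi_{n-1}}(\Delta_{n-1})$ when $p=0$, and via the intermediate quotients $\Delta^{(l,p')}$ and $\overline{\Delta}$, together with their center-freeness, when $p>0$. Within the present reduction the only subtle point is the coherence of the various inertia subgroups: one must confirm that the inertia subgroup at a codimension-one point of $\mathfrak{X}_{n-1}\setminus X_{n-1}$ (or, in the setting of Proposition~\ref{outact}, of $Y\setminus(Y\times_{T}\eta)$) is carried into an inertia subgroup of $\pi_{1}(\eta,s)$ and matches the one seen after strict henselization --- this is where flatness over the base and the strict-localization squares of Proposition~\ref{outact} enter --- and that the resulting $I$ is genuinely an inertia subgroup of $\pi_{1}(X_{n-1},\overline{t})$ at $\xi$ normally generating the relevant kernel, so that the two invocations of Oda--Tamagawa (over $O_{\mathfrak{X}_{n-1},\xi}$ and, after induction, for $X_{n-1}$ itself) fit together consistently.
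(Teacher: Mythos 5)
Your reduction is correct and follows essentially the same route as the paper: induction on $n$ with Propositions \ref{3odatama} and \ref{3gr} as the base case, descent of the hypotheses along the surjection $\Delta_{n}\twoheadrightarrow\Delta_{n-1}$ (and, for Theorem \ref{main,thm}.4, along the image of the rational point) to obtain $\mathfrak{X}_{n-1}$ by the induction hypothesis, then Proposition \ref{mor} to reduce to the discrete valuation ring $O_{\mathfrak{X}_{n-1},\xi}$, Oda--Tamagawa there, and the factorization of the $I_{\xi}$-action through $\Pi_{n-1}\rightarrow\mathrm{Out}\,\Delta_{n,n-1}^{l}$ via Proposition \ref{outact}, so that only the triviality of $I\rightarrow\mathrm{Out}\,\Delta_{n,n-1}^{l}$ remains. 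You even make explicit a point the paper leaves implicit, namely how the hypothesis of Theorem \ref{main,thm}.4 descends to $X_{n-1}$ through the section attached to $f_{n}(x)$.
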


By the argument as in the proof of Proposition \ref{outact}, the image of $I$ by the homomorphism $\Pi_{n-1} \rightarrow G_{K}$ is contained in some inertia subgroup of $G_{K}$.
Therefore, we may assume that $K$ is strictly henselian.

\label{reduction}

\section{The case of residual characteristic $0$}
We prove Theorem \ref{main,thm}.2 and Theorem \ref{open,main,thm}.2 in this section.
Hence, we may assume that $p=0$ and $K$ is strictly henselian.
By Proposition \ref{claim}, it suffices to show that the outer action $I \rightarrow \mathrm{Out}\,{\Delta_{n,n-1}}$ is trivial.

Since the characteristic of the field $K$ is also $0$, we have homotopy exact sequences
$$1 \to \Delta_{n,n-1} \to \Pi_{n} \to \Pi_{n-1} \rightarrow 1$$
and
\begin{equation}
1 \rightarrow \Delta_{n,n-1} \rightarrow \Delta_{n} \rightarrow \Delta_{n-1} \rightarrow 1
\label{extension}
\end{equation}
by {\cite[PROPOSITION 2.5]{Ho}}.
Note that $\Delta_{n}$ is center-free, which follows from {\cite[Proposition 1.11]{Tama1}}, the homotopy exact sequence (\ref{extension}), and the fact that an extension group of center-free groups is also center-free.
For a group $G$ and a subgroup $H$ of $G$, we write $Z_{G}(H)$ for the centralizer subgroup of $H$ in $G$.

\begin{lem}
There exist decompositions of profinite groups
$$\Pi_{n} = \Delta_{n} \times Z_{\Pi_{n}}(\Delta_{n})$$
and
$$\Pi_{n-1} = \Delta_{n-1} \times Z_{\Pi_{n-1}}(\Delta_{n-1}),$$
which are compatible with the natural surjection $\Pi_{n} \rightarrow \Pi_{n-1}.$
\label{lem1}
\end{lem}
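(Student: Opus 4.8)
The plan is to derive both product decompositions from a single group-theoretic statement applied to the two horizontal exact sequences $1 \to \Delta_{i} \to \Pi_{i} \to G_{K} \to 1$ (cf.\,(\ref{iexact})) for $i = n$ and $i = n-1$, and then to verify that the two resulting retractions are compatible with the vertical maps $\Pi_{n} \to \Pi_{n-1}$ and $\Delta_{n} \to \Delta_{n-1}$.

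First I would isolate the following standard fact. Let $1 \to N \to G \to Q \to 1$ be an exact sequence of profinite groups with $N$ topologically finitely generated and center-free, and suppose the outer action $Q \to \mathrm{Out}(N)$ induced by conjugation is trivial. Since $N$ is topologically finitely generated, $\mathrm{Aut}(N)$ is a profinite group, the conjugation map $G \to \mathrm{Aut}(N)$ is continuous, and center-freeness makes $N \to \mathrm{Inn}(N)$ an isomorphism of topological groups onto the closed subgroup $\mathrm{Inn}(N)$. Triviality of the outer action means this conjugation map lands in $\mathrm{Inn}(N)$, so one obtains a continuous homomorphism $r \colon G \to \mathrm{Inn}(N) \xrightarrow{\sim} N$ whose restriction to $N$ is $\mathrm{id}_{N}$. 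Thus $r$ is a continuous retraction of $N \hookrightarrow G$; its kernel is the closed subgroup $Z_{G}(N)$, one has $N \cap Z_{G}(N) = Z(N) = 1$, and because $N$ is normal while $Z_{G}(N)$ centralizes it, the decomposition $G = N \times Z_{G}(N)$ follows, with $Z_{G}(N) \xrightarrow{\sim} Q$.

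I would then apply this twice. For $i = n$: the group $\Delta_{n}$ is center-free (as recorded above, via \cite[Proposition 1.11]{Tama1} and (\ref{extension})) and topologically finitely generated, and the outer action $G_{K} = I_{K} \to \mathrm{Out}(\Delta_{n}) = \mathrm{Out}(\Delta_{n}^{p'})$ is trivial by hypothesis (B), since $p = 0$ and $K$ is strictly henselian; this yields $\Pi_{n} = \Delta_{n} \times Z_{\Pi_{n}}(\Delta_{n})$. For $i = n-1$: the same argument applied to $X_{n-1}$ (itself a proper hyperbolic polycurve) shows $\Delta_{n-1}$ is center-free, and the outer action $I_{K} \to \mathrm{Out}(\Delta_{n-1}^{p'})$ is trivial --- this is (\ref{induction}), already established --- so $\Pi_{n-1} = \Delta_{n-1} \times Z_{\Pi_{n-1}}(\Delta_{n-1})$. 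It remains to prove compatibility with the surjections $q \colon \Delta_{n} \twoheadrightarrow \Delta_{n-1}$ and $\bar{q} \colon \Pi_{n} \twoheadrightarrow \Pi_{n-1}$. Writing $r_{n}, r_{n-1}$ for the two retractions, I claim $q \circ r_{n} = r_{n-1} \circ \bar{q}$: for $g \in \Pi_{n}$, conjugation by $g$ on $\Delta_{n}$ agrees with conjugation by $r_{n}(g)$, and pushing this identity forward along the surjection $q$ (which intertwines conjugation by $g$ with conjugation by $\bar{q}(g)$) shows that conjugation by $\bar{q}(g)$ on $\Delta_{n-1}$ agrees with conjugation by $q(r_{n}(g))$; center-freeness of $\Delta_{n-1}$ then forces $r_{n-1}(\bar{q}(g)) = q(r_{n}(g))$. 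Hence $\bar{q}$ carries $Z_{\Pi_{n}}(\Delta_{n}) = \ker r_{n}$ into $Z_{\Pi_{n-1}}(\Delta_{n-1}) = \ker r_{n-1}$, so $\bar{q}$ respects both product decompositions, acting as $q$ on the first factor; this is the asserted compatibility.

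The only genuinely delicate point is this last compatibility: it is exactly what makes center-freeness of \emph{both} $\Delta_{n}$ and $\Delta_{n-1}$ indispensable, since uniqueness of the "inner-ising" element at each level is what pins down the retractions and their interplay with $\bar q$. The continuity bookkeeping in the group-theoretic input is routine, the $\Delta_{i}$ being topologically finitely generated.
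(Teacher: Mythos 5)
Your argument is correct and follows essentially the same route as the paper: center-freeness of $\Delta_{i}$ together with the trivial outer $I_{K}$-action (for $i=n$ by hypothesis (B), for $i=n-1$ by the already-established triviality of (\ref{induction})) yields the canonical decomposition $\Pi_{i}=\Delta_{i}\times Z_{\Pi_{i}}(\Delta_{i})$, and compatibility comes from the surjectivity of $\Delta_{n}\to\Delta_{n-1}$. The only difference is cosmetic: the paper observes directly that an element centralizing $\Delta_{n}$ maps to an element centralizing its image $\Delta_{n-1}$, whereas you reach the same containment via uniqueness of the retraction, which is fine.
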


\begin{proof}
Since $\Delta_{n}$ is center-free, we have
$$ \Delta_{n} \cap Z_{\Pi_{n}}(\Delta_{n}) = \{ 1 \}.$$
Moreover, since $K$ is strictly henselian and the outer action
$$I_{K} (\cong \Pi_{n}/\Delta_{n}) \rightarrow \mathrm{Out}\,(\Delta_{n})$$
is trivial, we have a canonical decomposition $\Pi_{n} = \Delta_{n} \times Z_{\Pi_{n}}(\Delta_{n}).$
By the same argument, we obtain a canonical decomposition $\Pi_{n-1} = \Delta_{n-1} \times Z_{\Pi_{n-1}}(\Delta_{n-1})$.
The homomorphism $\Pi_{n} \rightarrow \Pi_{n-1}$ is compatible with the homomorphism $\Delta_{n} \rightarrow \Delta_{n-1}$, and hence $Z_{\Pi_{n}}(\Delta_{n}) \rightarrow Z_{\Pi_{n}}(\Delta_{n-1})$ is well-defined. Therefore, these decomposition of groups are compatible.
\end{proof}

\begin{lem}
$$I \subset Z_{\Pi_{n-1}}(\Delta_{n-1}).$$
\label{lem2}
\end{lem}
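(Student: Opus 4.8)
The plan is to realise $\Pi_{n-1}$ as a direct product with $\Delta_{n-1}$ as one factor, and to place $I$ inside the complementary factor, which is the centralizer. By the induction hypothesis $X_{n-1}$ has good reduction, so I fix the model $\mathfrak{X}_{n-1}$ over $O_{K}$ that was used in Section \ref{reduction}: a smooth model in the situation of Theorem \ref{main,thm}, and a hyperbolic polycurve over $O_{K}$ with generic fibre $X_{n-1}$ in the situation of Theorem \ref{open,main,thm}. Recall also that we have reduced to the case where $K$ (hence $O_{K}$) is strictly henselian and $p=0$.

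First I would consider the homomorphism $c\colon\Pi_{n-1}=\pi_{1}(X_{n-1},\ast)\to\pi_{1}(\mathfrak{X}_{n-1},\ast)$ induced by the open immersion $X_{n-1}\hookrightarrow\mathfrak{X}_{n-1}$ of the generic fibre. By the discussion preceding Proposition \ref{claim}, $\mathrm{Ker}\,c$ is the closed normal subgroup of $\Pi_{n-1}$ topologically generated by the $\Pi_{n-1}$-conjugates of $I$; in particular $I\subset\mathrm{Ker}\,c$.

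Next I would identify the restriction of $c$ to $\Delta_{n-1}$ with a specialization homomorphism and prove it is an isomorphism. Since $\Spec K^{\mathrm{sep}}\to\Spec O_{K}$ is a geometric generic point of $\mathfrak{X}_{n-1}\to\Spec O_{K}$, the scheme $X_{n-1}\times_{\Spec K}\Spec K^{\mathrm{sep}}$ is the corresponding geometric generic fibre, and the composite
$$\Delta_{n-1}=\pi_{1}(X_{n-1}\times_{\Spec K}\Spec K^{\mathrm{sep}},\ast)\hookrightarrow\Pi_{n-1}\xrightarrow{c}\pi_{1}(\mathfrak{X}_{n-1},\ast)$$
is exactly the specialization homomorphism attached to $\mathfrak{X}_{n-1}\to\Spec O_{K}$. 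By Theorem \ref{isom2} it is an isomorphism after pro-$p'$ completion; since $p=0$, every profinite group coincides with its pro-$p'$ completion (by the convention recalled in the introduction), so $c|_{\Delta_{n-1}}\colon\Delta_{n-1}\to\pi_{1}(\mathfrak{X}_{n-1},\ast)$ is itself an isomorphism. In particular $\Delta_{n-1}\cap\mathrm{Ker}\,c=\{1\}$.

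Finally I would conclude. For $\gamma\in\mathrm{Ker}\,c$ and $\delta\in\Delta_{n-1}$, normality of $\Delta_{n-1}$ in $\Pi_{n-1}$ gives $\gamma\delta\gamma^{-1}\delta^{-1}\in\Delta_{n-1}$, while $c(\gamma)=1$ gives $c(\gamma\delta\gamma^{-1}\delta^{-1})=1$; hence $\gamma\delta\gamma^{-1}\delta^{-1}\in\Delta_{n-1}\cap\mathrm{Ker}\,c=\{1\}$, so $\gamma$ commutes with $\delta$. Therefore $\mathrm{Ker}\,c\subset Z_{\Pi_{n-1}}(\Delta_{n-1})$, and in particular $I\subset Z_{\Pi_{n-1}}(\Delta_{n-1})$, as desired. (One may moreover observe that $c$ is then a retraction of $\Pi_{n-1}$ onto its normal subgroup $\Delta_{n-1}$, whence $\Pi_{n-1}=\Delta_{n-1}\times\mathrm{Ker}\,c$ and in fact $\mathrm{Ker}\,c=Z_{\Pi_{n-1}}(\Delta_{n-1})$, in agreement with Lemma \ref{lem1}.) The only substantial input is the isomorphy of the specialization homomorphism in the third step; in the possibly non-proper case this is precisely Theorem \ref{isom2} applied to $\mathfrak{X}_{n-1}$, so once that is in hand the argument is purely formal group theory. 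The sole bookkeeping point is compatibility of base points, but the single geometric point $\ast$ of $X\times_{\Spec K}\Spec K^{\mathrm{sep}}$ serves simultaneously as a base point for $X_{n-1}$, for $X_{n-1}\times_{\Spec K}\Spec K^{\mathrm{sep}}$, and for $\mathfrak{X}_{n-1}$, so no path-twisting is needed.
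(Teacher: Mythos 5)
Your proposal is correct and follows essentially the same route as the paper: both use the retraction $c\colon\Pi_{n-1}\to\pi_{1}(\mathfrak{X}_{n-1},\ast)$, the fact that $I$ lies in $\mathrm{Ker}\,c$ (the kernel being normally generated by $I$), and Theorem \ref{isom2} (with $p=0$, so pro-$p'$ completion is the full group) to see that $c|_{\Delta_{n-1}}$ is an isomorphism. The only cosmetic difference is that you spell out the commutator argument giving $\mathrm{Ker}\,c\subset Z_{\Pi_{n-1}}(\Delta_{n-1})$, where the paper simply records the resulting decomposition $\Pi_{n-1}\cong\Delta_{n-1}\times\mathrm{Ker}\,c$ and concludes.
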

\begin{proof}
We have the following commutative diagram with exact horizontal line:
\[
\xymatrix{ 
& & I \ar@{^{(}-_>}[d]
& &\\
1 \ar[r] & \Delta_{n-1} \ar[r] \ar[rd]
& \Pi_{n-1} \ar[d] \ar[r]
& G_{K} \ar[r] & 1\\
& & \pi_{1}(\mathfrak{X}_{n-1},\overline{t}).
& &
}
\]
Here, the homomorphism $\Delta_{n-1} \rightarrow \pi_{1}(\mathfrak{X}_{n-1},\overline{t})$ is an isomorphism by Theorem \ref{isom2}.
Then we obtain a decomposition
\begin{equation}
\Pi_{n-1} \cong \Delta_{n-1} \times \mathrm{Ker}(\Pi_{n-1} \rightarrow \pi_{1}(\mathfrak{X}_{n-1}, \overline{t})).
\label{finalout}
\end{equation}
Therefore, $I \subset \mathrm{Ker}(\Pi_{n-1} \rightarrow \pi_{1}(\mathfrak{X}_{n-1}, \overline{t})) \subset Z_{\Pi_{n-1}}(\Delta_{n-1})$ holds.
\end{proof}
Now, we prove Theorem \ref{main,thm}.2 and Theorem \ref{open,main,thm}.2.
\begin{proof}[Proofs of Theorem \ref{main,thm}.2 and Theorem \ref{open,main,thm}.2]
By Lemma \ref{lem2}, it suffices to show that the outer action
$$Z_{\Pi_{n-1}}(\Delta_{n-1}) \rightarrow \mathrm{Out}\,{\Delta_{n,n-1}}$$
is trivial.
By Lemma \ref{lem1}, the homomorphism $Z_{\Pi_{n}}(\Delta_{n}) \rightarrow Z_{\Pi_{n-1}}(\Delta_{n-1})$ is surjective.
Therefore, the outer action (\ref{finalout}) is trivial.
We finished the proof of Theorem \ref{main,thm}.2 and Theorem \ref{open,main,thm}.2.
\end{proof}
\label{(0,0)}

\section{The case of residual characteristic $p > 0$}
We maintain the notation of Section \ref{reduction}.
In this section, we will prove Theorem \ref{main,thm}.3, 4 and Theorem \ref{open,main,thm}.3.
Hence, we assume that $p > b+1$.
It suffices to prove that the outer action $I \rightarrow \mathrm{Out}\,\Delta_{n,n-1}^{l}$
is trivial for some prime number $l \neq p$ by Proposition \ref{claim}.
Let us take a prime number $l$ which is a generator of the cyclic group $(\Z/p\Z)^{*}$, whose existence follows from the theorem on arithmetic progression.
Since $l, l^{2}, \ldots , l^{b}$ are not $1 \in (\Z/p\Z)^{*}$ by the hypothesis $p > b+1$, the order of the group $\mathrm{GL}(b, \F_{l})$ is not divisible by $p$.
Also, the profinite group $\mathrm{Ker}(\mathrm{Aut}(\Delta_{n,n-1}^{l}) \rightarrow \mathrm{Aut}((\Delta_{n,n-1}^{l})^{\mathrm{ab}}/l(\Delta_{n,n-1}^{l})^{\mathrm{ab}}))$ is pro-$l$ by a well known theorem of P.Hall.
Here, the superscript ``ab'' denotes the abelianization of the profinite group.
Therefore, the profinite groups $\mathrm{Aut}(\Delta_{n,n-1}^{l})$ and $\mathrm{Out}(\Delta_{n,n-1}^{l})$ are pro-prime-to-$p$.

Note that we have two exact sequences of profinite groups
$$\Delta_{n,n-1} \rightarrow \Pi_{n} \rightarrow \Pi_{n-1} \rightarrow 1$$
and
$$\Delta_{n,n-1} \rightarrow \Delta_{n} \rightarrow \Delta_{n-1} \rightarrow 1$$
by {\cite[PROPOSITION 1.10]{Ho}}.
Unfortunately, if the characteristic of $K$ is equal to $p$, the homomorphism $\Delta_{n,n-1} \rightarrow \Delta_{n}$ is not always injective (cf.\,{\cite[Theorem (0.3)]{Tama2}}).

We have the following commutative diagram with exact horizontal lines:

\[
\xymatrix{ 
& \Delta_{n,n-1} \ar[d] \ar[r] 
& \Delta_{n} \ar[d] \ar[r]
& \Delta_{n-1} \ar[d] \ar[r] & 1\\
1 \ar[r] & \mathrm{Inn}(\Delta_{n,n-1}^{l}) \ar[r]
& \mathrm{Aut}(\Delta_{n,n-1}^{l}) \ar[r]
& \mathrm{Out}(\Delta_{n,n-1}^{l}) \ar[r] & 1.
}
\]
Here, the homomorphism $\Delta_{n} \rightarrow \mathrm{Aut}(\Delta_{n,n-1}^{l})$ is constructed in Proposition \ref{outact}. Since the profinite group $\mathrm{Out}(\Delta_{n,n-1}^{l})$ is a pro-prime-to-$p$ group, the outer action $\Delta_{n-1} \rightarrow \mathrm{Out}(\Delta_{n,n-1}^{l})$ factors through $\Delta_{n-1}^{p'}$.
We write $\Delta_{n}^{(l,p')}$ for the pull-back $\mathrm{Aut}(\Delta_{n,n-1}^{l}) \times_{\mathrm{Out}(\Delta_{n,n-1}^{l})} \Delta_{n-1}^{p'}$.
Then we have the following commutative diagram with exact horizontal lines:

\[
\xymatrix{ 
& \Delta_{n,n-1} \ar[d] \ar[r] 
& \Delta_{n} \ar[d] \ar[r]
& \Delta_{n-1} \ar[d] \ar[r] & 1\\
1 \ar[r] & \mathrm{Inn}(\Delta_{n,n-1}^{l}) \ar[r]  \ar[d]
& \Delta_{n}^{(l,p')} \ar[r] \ar[d]
& \Delta_{n-1}^{p'}\ar[r] \ar[d] & 1\\
1 \ar[r] & \mathrm{Inn}(\Delta_{n,n-1}^{l}) \ar[r]
& \mathrm{Aut}(\Delta_{n,n-1}^{l}) \ar[r]
& \mathrm{Out}(\Delta_{n,n-1}^{l}) \ar[r] & 1.
}
\]
Since the profinite group $\Delta_{n,n-1}^{l}$ is center-free, the homomorphism
$\Delta_{n,n-1}^{l} \rightarrow \mathrm{Inn}(\Delta_{n,n-1}^{l})$ is an isomorphism.
Therefore, we obtain an exact sequence
\begin{equation}
1 \rightarrow \Delta_{n,n-1}^{l} \rightarrow \Delta_{n}^{(l,p')} \rightarrow \Delta_{n-1}^{p'} \rightarrow 1.
\label{seq1}
\end{equation}

Next, consider the following diagram of exact sequences with exact horizontal lines:
\[
\xymatrix{ 
& \Delta_{n,n-1} \ar[d] \ar[r] 
& \Pi_{n} \ar[d] \ar[r]
& \Pi_{n-1} \ar[d] \ar[r] & 1\\
1 \ar[r] & \mathrm{Inn}(\Delta_{n,n-1}^{l}) \ar[r]
& \mathrm{Aut}(\Delta_{n,n-1}^{l}) \ar[r]
& \mathrm{Out}(\Delta_{n,n-1}^{l}) \ar[r] & 1.
}
\]
Here, the homomorphism $\Pi_{n} \rightarrow \mathrm{Aut}(\Delta_{n,n-1}^{l})$ is constructed in Proposition \ref{outact}.
By applying Notation-Proposition \ref{prop'} to the exact sequences (\ref{iexact}), we have an exact sequence of profinite groups
\begin{equation}
1 \to \Delta_{n-1}^{p'} \to \Pi_{n-1}^{(p')} \to G_{K} \to1.
\label{seq2}
\end{equation}
We write $\Pi_{n}^{((l,p'))}$ for the pull-back $\mathrm{Aut}(\Delta_{n,n-1}^{l}) \times_{\mathrm{Out}(\Delta_{n,n-1}^{l})} \Pi_{n-1}^{(p')}$.
Then we have the following commutative diagram with exact horizontal lines:
\[
\xymatrix{ 
& \Delta_{n,n-1} \ar[d] \ar[r] 
& \Pi_{n} \ar[d] \ar[r]
& \Pi_{n-1} \ar[d] \ar[r] & 1\\
1 \ar[r] & \mathrm{Inn}(\Delta_{n,n-1}^{l}) \ar[r]  \ar[d]
&\Pi_{n}^{((l,p'))} \ar[r] \ar[d]
& \Pi_{n-1}^{(p')} \ar[r] \ar[d] & 1\\
1 \ar[r] & \mathrm{Inn}(\Delta_{n,n-1}^{l}) \ar[r]
& \mathrm{Aut}(\Delta_{n,n-1}^{l}) \ar[r]
& \mathrm{Out}(\Delta_{n,n-1}^{l}) \ar[r] & 1.
}
\]
Therefore, we obtain an exact sequence
\begin{equation}
1 \rightarrow \Delta_{n,n-1}^{l} \rightarrow \Pi_{n}^{((l,p'))} \rightarrow \Pi_{n-1}^{(p')} \rightarrow 1.
\label{seq3}
\end{equation}
From the three exact sequences (\ref{seq1}), (\ref{seq2}), and (\ref{seq3}), we have an exact sequence
$$1 \rightarrow \Delta_{n}^{(l,p')} \rightarrow \Pi_{n}^{((l,p'))} \rightarrow G_{K} \rightarrow1.$$

\begin{proof}[Proofs of Theorem \ref{main,thm}.3 and Theorem \ref{open,main,thm}.3]
If $n = 2$, the profinite groups $\Delta_{2,1}^{l}$ and $\Delta_{1}^{p'}$ are center-free. Therefore, the profinite group $\Delta_{2}^{(l,p')}$ is also center-free. Since Lemma \ref{lem1} and Lemma \ref{lem2} work if we replace 
$\Pi_{n}, \Delta_{n}, \Pi_{n-1}$ and $\Delta_{n-1}$ by $\Pi_{2}^{((l,p'))}, \Delta_{2}^{(l,p')}, \Pi_{1}^{(p')}$ and $\Delta_{1}^{p'}$, we can prove that the outer action $I \rightarrow \mathrm{Out}\,\Delta_{2,1}^{l}$ is trivial.
Thus, we finished the proofs of Theorem \ref{main,thm}.3 and Theorem \ref{open,main,thm}.3.
\end{proof}

\begin{proof}[Proof of Theorem \ref{main,thm}.4]
Suppose that $X$ has a $K$-rational point $x$ and that the Galois representation 
$I_{K(x)} \rightarrow \mathrm{Aut}(\pi _1 (X  \times_{\Spec K} \Spec K^{\mathrm{sep}} ,\overline{x})^{p'})$ defined as in (\ref{3doa}) in Section \ref{repsect} is trivial.
Take a path from the fundamental functor defined by $\overline{x}\to X\times_{\Spec K}\Spec K^{\sep}$ to that defined by $\overline{t}\to X\times_{\Spec K}\Spec K^{\sep}$.
Then we have an induced homomorphism $I_{K(x)}\to \Pi_{n}$ and the induced action of $I_{K(x)}$ on $\Delta_{n}^{p'}$ is trivial.
We have a commutative diagram with exact horizontal lines
\[
\xymatrix{ 
& & I_{K(x)} \ar[d] & I \ar[d]\\
& \Delta_{n,n-1} \ar[d] \ar[r] 
& \Pi_{n} \ar[d] \ar[r]
& \Pi_{n-1} \ar[d] \ar[r] & 1\\
1 \ar[r] & \Delta_{n,n-1}^{l} \ar[r]  \ar@{^{(}-_>}[d]
&\Pi_{n}^{((l,p'))} \ar[r] \ar[d]
& \Pi_{n-1}^{(p')}\ar[r] \ar[d] & 1\\
1 \ar[r] & \Delta_{n}^{(l,p')} \ar[r]
&\Pi_{n}^{((l,p'))} \ar[r]
& G_{K} \ar[r] & 1.
}
\]
Since the action of $I_{K(x)}$ on $\Delta_{n}^{p'}$ is trivial, the action of $I_{K(x)}$ on $\Delta_{n}^{(l,p')}$ is also trivial.
Hence, the action of $I_{K(x)}$ on $\Delta_{n,n-1}^{l}$ is trivial.
Since the image of $I$ in $\Pi_{n-1}^{(p')}$ is contained in $\mathrm{Ker}(\Pi_{n-1}^{(p')} \to \pi_{1}(\mathfrak{X}_{n-1},\overline{t})^{p'})$, it suffice to show that the image of $I_{K(x)}$ in $\Pi_{n-1}^{(p')}$ coincides with $\mathrm{Ker}(\Pi_{n-1}^{(p')} \to \pi_{1}(\mathfrak{X}_{n-1},\overline{t})^{p'})$ to show that the outer action $I\to \mathrm{Out}\Delta_{n,n-1}^{l}$ is trivial.

By valuative criterion, the composite morphism $\Spec K(x) \to X_{n-1} \to \mathfrak{X}_{n-1}$ factors 
the morphism $\Spec K(x) \to \Spec O_{K(x)}$.
Therefore, we have a natural homomorphism $(I_{K(x)} =)\, G_{K(x)} \to \mathrm{Ker}(\Pi_{n-1}^{(p')} \to \pi_{1}(\mathfrak{X}_{n-1},\overline{t})^{p'})$.
Since the composite morphism $\Delta_{n-1}^{p'} \to\Pi_{n-1}^{(p')} \to  \pi_{1}(\mathfrak{X}_{n-1},\overline{t})^{p'}$ is an isomorphism by Theorem \ref{isom2}, the composite homomorphism $\mathrm{Ker}(\Pi_{n-1}^{(p')} \to \pi_{1}(\mathfrak{X}_{n-1},\overline{t})^{p'})\hookrightarrow \Pi_{n-1}^{(p')}\to G_{K}$ is an isomorphism by the exactness of the sequence (\ref{seq2}).
Therefore, the homomorphism $(I_{K(x)} =)\, G_{K(x)} \to \mathrm{Ker}(\Pi_{n-1}^{(p')} \to \pi_{1}(\mathfrak{X}_{n-1},\overline{t})^{p'})$ is an isomorphism.
Thus, we finished the proof of Theorem \ref{main,thm}.4.
\end{proof}

\label{pcase}

\section{The case of residual characteristic $p \gg 0$}
In this section, we prove Theorem \ref{higher,main}.
To prove Theorem 1.3, we need a very strong condition on $p$.
In this section, we maintain the notation of Section \ref{pcase} and suppose that the outer Galois representation $I_{K}\to\mathrm{Out}\,\Delta_{X}^{l}$ is trivial.

\begin{lem}[cf.\,{\cite[Lemma2.18]{Saw}} and {\cite[Proposition 3]{A}}]

Let
\begin{equation}
\xymatrix{ 
1 \ar[r] & N \ar[r] \ar@{=}[d]
& \tilde{G} \ar[d] \ar[r]
& \tilde{H} \ar[d] \ar[r] & 1\\
& N \ar[r] & G \ar[r] & H \ar[r] & 1
}
\label{AndSaw}
\end{equation}
be a commutative diagram of profinite groups with exact horizontal lines such that the middle and the right vertical lines are surjective.
Suppose that the conjugate action $\tilde{G} \rightarrow \mathrm{Aut}(N^{l})$ admits a factorization
\begin{equation}
\tilde{G} \rightarrow G \rightarrow \mathrm{Aut}(N^{l}).
\label{conjugate}
\end{equation}
Moreover, suppose that $N^{l}$ is topologically finitely generated and center-free.
Then the following are equivalent:
\begin{enumerate}
\item
The pro-$l$ completion of the sequence
$$1 \rightarrow N^{l} \rightarrow G^{l} \rightarrow H^{l} \rightarrow 1$$
induced by the lower line of (\ref{AndSaw}) is exact.
\item
The induced homomorphism $N^{l} \rightarrow G^{l}$ is injective.
\item The action $G \rightarrow \mathrm{Aut}(N^{l})$ given in (\ref{conjugate}) factors through $G \rightarrow G^{l}$.
\item The outer action $H \rightarrow \mathrm{Out}(N^{l})$ induced by (\ref{conjugate}) factors through $H \rightarrow H^{l}$.
\item The action $H \rightarrow \mathrm{Aut}(N^{\mathrm{ab},l}/lN^{\mathrm{ab},l})$ induced by (\ref{conjugate}) factors through $H \rightarrow H^{l}$.
\end{enumerate}
\label{Ander}
\end{lem}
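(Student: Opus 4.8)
The plan is to reduce to a clean situation and then run a short cycle of implications, the only non-formal input being the theorem of P.\,Hall on automorphism groups of topologically finitely generated pro-$l$ groups. First I would replace $N$ by its image $\bar N$ in $G$. Restricting the factorization (\ref{conjugate}) to $N\subseteq\tilde G$ identifies the canonical map $N\to N^{l}$, followed by the isomorphism $N^{l}\xrightarrow{\sim}\mathrm{Inn}(N^{l})$ coming from center-freeness, with $N\to\tilde G\to G\to\mathrm{Aut}(N^{l})$; since the latter factors through $N\twoheadrightarrow\bar N$, so does $N\to N^{l}$, and one deduces that $N^{l}\to\bar N^{l}$ is an isomorphism and that, under this identification, the map $G\to\mathrm{Aut}(N^{l})$ of (\ref{conjugate}) is the conjugation action of $G$ on $\bar N=\mathrm{Ker}(G\to H)$. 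Because conditions $1$--$5$ refer only to $N^{l}$, to $N^{\mathrm{ab},l}/lN^{\mathrm{ab},l}$ and to this action, I may then assume that $N$ is a closed normal subgroup of $G$ with $G/N=H$ and that $G\to\mathrm{Aut}(N^{l})$ is conjugation.

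Next I would dispose of the formal equivalences. Since pro-$l$ completion is right exact, $N^{l}\to G^{l}\to H^{l}\to1$ is automatically exact, so condition $1$ amounts exactly to the injectivity of $N^{l}\to G^{l}$, i.e.\ to condition $2$. For $2\Rightarrow3$ I would use $1$ to get an exact sequence $1\to N^{l}\to G^{l}\to H^{l}\to1$ and then check, on the topological generators of $N^{l}$ coming from $N$, that pulling back the conjugation action of $G^{l}$ on its normal subgroup $N^{l}$ along $G\to G^{l}$ recovers (\ref{conjugate}); hence the action factors through $G^{l}$. For $3\Rightarrow2$, if the action factors through $G^{l}$ then the canonical surjection $N\to N^{l}=\mathrm{Inn}(N^{l})$ factors through the image $J$ of $N$ in $G^{l}$, which is pro-$l$; the surjections $N\twoheadrightarrow J$ and $J\twoheadrightarrow N^{l}$ compose to $N\to N^{l}$, so $N^{l}\to J$ is a surjection admitting a retraction, hence an isomorphism, and therefore $N^{l}\to G^{l}$ is injective. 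Finally, for $3\Rightarrow4\Rightarrow5$ I would compose the action with $\mathrm{Aut}(N^{l})\to\mathrm{Out}(N^{l})$ (using that $N$ acts by inner automorphisms, together with right exactness of pro-$l$ completion applied to $N\to G\to H\to1$) and then with $\mathrm{Out}(N^{l})\to\mathrm{Aut}(N^{\mathrm{ab},l}/lN^{\mathrm{ab},l})$.

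The substantive step is $5\Rightarrow3$, which closes the cycle. Here I would invoke the theorem of P.\,Hall: the kernel $P$ of $\mathrm{Aut}(N^{l})\to\mathrm{Aut}(N^{\mathrm{ab},l}/lN^{\mathrm{ab},l})$ is a pro-$l$ group, and $\mathrm{Aut}(N^{l})/P$ embeds into the finite group $\mathrm{Aut}(N^{\mathrm{ab},l}/lN^{\mathrm{ab},l})$. Writing $\rho\colon G\to\mathrm{Aut}(N^{l})$ for the action of (\ref{conjugate}), the group $\rho(G)\cap P$ is a closed subgroup of $P$, hence pro-$l$, while $\rho(G)/(\rho(G)\cap P)$ is the image of $G$ in $\mathrm{Aut}(N^{l})/P$, which factors through $H\to\mathrm{Aut}(N^{\mathrm{ab},l}/lN^{\mathrm{ab},l})$ and, by condition $5$, through $H^{l}$; being a finite quotient of a pro-$l$ group it is an $l$-group. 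Since an extension of a pro-$l$ group by a pro-$l$ group is again pro-$l$, $\rho(G)$ is pro-$l$, and therefore $\rho$ factors through the maximal pro-$l$ quotient $G^{l}$; this is condition $3$. Together with the previous paragraph this yields $1\Leftrightarrow2\Leftrightarrow3$ and $3\Rightarrow4\Rightarrow5\Rightarrow3$.

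I expect the main obstacle to be precisely $5\Rightarrow3$, which is not a diagram chase: it rests on the theorem of P.\,Hall---equivalently, on the fact that $\mathrm{Aut}(N^{l})$ is an extension of a finite group by a pro-$l$ group, which is where the hypothesis that $N^{l}$ is topologically finitely generated is used---and on the elementary fact that an extension of pro-$l$ groups is pro-$l$. Center-freeness of $N^{l}$ is used throughout to identify $N^{l}$ with $\mathrm{Inn}(N^{l})$, and the reduction step in the first paragraph also needs a little care to confirm that the hypothesis on (\ref{conjugate}) is exactly what allows one to replace $N$ by $\bar N$.
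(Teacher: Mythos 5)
Your proof is correct and follows essentially the same route as the paper: the trivial implications, center-freeness to identify $N^{l}$ with $\mathrm{Inn}(N^{l})$, right exactness of pro-$l$ completion, and P.\,Hall's theorem that $\mathrm{Ker}(\mathrm{Aut}(N^{l})\to\mathrm{Aut}(N^{\mathrm{ab},l}/lN^{\mathrm{ab},l}))$ is pro-$l$ for the substantive step. The only cosmetic differences are your preliminary reduction to $N$ normal in $G$ (which the paper skips) and your merging of the paper's $5\Rightarrow4\Rightarrow3$ into a single $5\Rightarrow3$ using the same ingredients.
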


\begin{proof}
The implications 1 $\Rightarrow$ 2 $\Rightarrow$ 3 $\Rightarrow$ 4 $\Rightarrow$ 5 are trivial.
Since $N^{l}$ is topologically finitely generated, the profinite group $\mathrm{Ker}(\mathrm{Aut}(N^{l}) \rightarrow \mathrm{Aut}((N^{\mathrm{ab}}/l(N^{l})^{\mathrm{ab}}))$ is pro-$l$ by a well known theorem of P.Hall.
Therefore, the implication 5 $\Rightarrow$ 4 holds.
The image of the homomorphism $G \rightarrow \mathrm{Aut}(N^{l})$ is pro-$l$ if and only if the image of the homomorphism $H \rightarrow \mathrm{Out}(N^{l})$ is pro-$l$.
Therefore, the implication 4 $\Rightarrow$ 3 holds.
Since $N^{l}$ is center-free, the homomorphism $N^{l} \to \mathrm{Inn}(N^{l})$ is isomorphic, and hence the composite homomorphism $N^{l} \to \mathrm{Inn}\to \mathrm{Aut}(N^{l})$ is injective.
Therefore, the implication 3 $\Rightarrow$ 2 holds.
Since taking pro-$l$ completion is a right exact functor,  the implication 2 $\Rightarrow$ 1 holds.
\end{proof}

\begin{prop}
If there exist a quotient group
$\Delta^{(l,p')}_{n} \rightarrow \overline{\Delta}_{n}$ and a center-free quotient group $\Delta_{n-1}^{p'} \rightarrow \overline{\Delta}_{n-1}$
such that the diagram
\[
\xymatrix{ 
1 \ar[r] & \Delta_{n,n-1}^{l} \ar[r]  \ar[d]
& \Delta_{n}^{(l,p')} \ar[r] \ar[d]
& \Delta_{n-1}^{p'}\ar[r] \ar[d] & 1\\
1 \ar[r] & \Delta_{n,n-1}^{l} \ar[r]
& \overline{\Delta}_{n} \ar[r]
& \overline{\Delta}_{n-1} \ar[r] & 1
}
\]
commutes and the second horizontal line is exact, then the outer action $I \rightarrow \mathrm{Out}(\Delta_{n,n-1}^{l})$ is trivial.
\label{bargp}
\end{prop}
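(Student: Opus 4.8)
The plan is to reduce the statement about $I$ to a statement about the Galois group $G_K$ (recalling that $K$ has already been assumed strictly henselian), and then to exploit the hypothesis that the outer representation $I_K \to \mathrm{Out}(\Delta_X^l)$ is trivial by transporting it along the two-stage fibration $X_n \to X_{n-1} \to \mathrm{Spec}\,K$. Concretely, I would first form the pull-back
\[
\overline{\Pi}_n \deq \mathrm{Aut}(\Delta_{n,n-1}^l) \times_{\mathrm{Out}(\Delta_{n,n-1}^l)} \overline{\Pi}_{n-1},
\]
where $\overline{\Pi}_{n-1}$ is constructed from $\Pi_{n-1}^{(p')}$ exactly as $\overline{\Delta}_{n-1}$ was constructed from $\Delta_{n-1}^{p'}$: that is, I would check that the surjection $\Delta_{n-1}^{p'}\twoheadrightarrow\overline{\Delta}_{n-1}$ extends to a surjection $\Pi_{n-1}^{(p')}\twoheadrightarrow\overline{\Pi}_{n-1}$ fitting into $1\to\overline{\Delta}_{n-1}\to\overline{\Pi}_{n-1}\to G_K\to 1$ and compatible with the outer action on $\Delta_{n,n-1}^l$ (this uses that $\mathrm{Ker}(\Delta_{n-1}^{p'}\to\overline{\Delta}_{n-1})$ is characteristic, or at least normal in $\Pi_{n-1}^{(p')}$, which follows since it arises from a pull-back along an outer action factoring through $G_K$). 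Stacking this with the exact sequence $1\to\Delta_{n,n-1}^l\to\overline{\Pi}_n\to\overline{\Pi}_{n-1}\to 1$ coming from the displayed hypothesis, I obtain an exact sequence $1\to\overline{\Delta}_n\to\overline{\Pi}_n\to G_K\to 1$ surjecting onto the analogous sequence for $\Pi_n^{((l,p'))}$.

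Next I would run the argument of Lemma \ref{lem1} and Lemma \ref{lem2} verbatim with $\Pi_n,\Delta_n,\Pi_{n-1},\Delta_{n-1}$ replaced by $\overline{\Pi}_n,\overline{\Delta}_n,\overline{\Pi}_{n-1},\overline{\Delta}_{n-1}$. For this I need two inputs: first, that $\overline{\Delta}_n$ is center-free; second, that the outer action of $I_K(\cong\overline{\Pi}_n/\overline{\Delta}_n)$ on $\overline{\Delta}_n$ is trivial. Center-freeness of $\overline{\Delta}_n$ follows because $\overline{\Delta}_n$ is an extension of the center-free group $\overline{\Delta}_{n-1}$ by the center-free group $\Delta_{n,n-1}^l$ (here one uses the standard fact, already cited in Section \ref{(0,0)}, that an extension of center-free profinite groups with the outer action giving an injection — which holds because $\Delta_{n,n-1}^l\xrightarrow{\sim}\mathrm{Inn}(\Delta_{n,n-1}^l)$ — is center-free). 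Triviality of the outer action of $I_K$ on $\overline{\Delta}_n$ should be deduced from the standing hypothesis that $I_K\to\mathrm{Out}(\Delta_X^l)$ is trivial: since $\Delta_X = \Delta_n$ and the various quotients $\Delta_n\to\Delta_n^{(l,p')}\to\overline{\Delta}_n$ are functorial in the $G_K$-action, triviality of the $G_K$-outer-action on $\Delta_n^l$ (equivalently on a group through which it factors) propagates to $\overline{\Delta}_n$.

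With Lemma \ref{lem1} and Lemma \ref{lem2} in hand in this setting, I get a decomposition $\overline{\Pi}_{n-1}\cong\overline{\Delta}_{n-1}\times Z$ with $I\subset Z$, where $Z = \mathrm{Ker}(\overline{\Pi}_{n-1}\to\pi_1(\mathfrak{X}_{n-1},\overline t)^{\text{(quotient)}})$ maps isomorphically to $G_K$ — here I use that the composite $\overline{\Delta}_{n-1}\hookrightarrow\overline{\Pi}_{n-1}\to(\text{the }\mathfrak X_{n-1}\text{-version})$ is an isomorphism, which reduces to Theorem \ref{isom2} via the isomorphism $\Delta_{n-1}^{p'}\xrightarrow{\sim}\pi_1(\mathfrak X_{n-1},\overline t)^{p'}$ used already in Lemma \ref{lem2}, pushed through the quotient to $\overline{\Delta}_{n-1}$. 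Then, as in the proof of Theorem \ref{main,thm}.2, the surjectivity of $Z_{\overline{\Pi}_n}(\overline{\Delta}_n)\to Z_{\overline{\Pi}_{n-1}}(\overline{\Delta}_{n-1})$ coming from the compatible product decompositions shows that the outer action of $Z_{\overline{\Pi}_{n-1}}(\overline{\Delta}_{n-1})\supset I$ on $\Delta_{n,n-1}^l$ lifts to an honest action on $\Delta_{n,n-1}^l$ coming from $Z_{\overline{\Pi}_n}(\overline{\Delta}_n)$, which commutes with $\overline{\Delta}_n\supset\Delta_{n,n-1}^l$, hence lands in the centralizer of $\mathrm{Inn}(\Delta_{n,n-1}^l)$ inside $\mathrm{Aut}(\Delta_{n,n-1}^l)$; by center-freeness of $\Delta_{n,n-1}^l$ this centralizer is trivial, so the outer action of $I$ on $\Delta_{n,n-1}^l$ is trivial, which is what Proposition \ref{claim} requires.

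The main obstacle I anticipate is the bookkeeping around the $\Pi$-level pull-back $\overline{\Pi}_n$: one must check carefully that the surjection $\Pi_{n-1}^{(p')}\to\overline{\Pi}_{n-1}$ genuinely exists (i.e.\ that $\mathrm{Ker}(\Delta_{n-1}^{p'}\to\overline{\Delta}_{n-1})$ is normal in $\Pi_{n-1}^{(p')}$), that the middle and right vertical maps in the resulting commutative diagram are surjective so that Lemma \ref{lem1}'s argument applies, and that all the outer actions in play are compatibly factored — the hypothesis of the proposition supplies exactly the exactness of the bottom row, but the passage from $\Delta$-sequences to $\Pi$-sequences needs the same pull-back formalism used to build (\ref{seq3}) from (\ref{seq1}), now applied one quotient further down. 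Once that diagram is set up correctly, everything else is a mechanical replay of Section \ref{(0,0)}.
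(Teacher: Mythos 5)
Your overall route---construct $\overline{\Pi}_{n-1}$ and $\overline{\Pi}_{n}$ at the arithmetic level, rerun Lemmas \ref{lem1} and \ref{lem2} with bars, and finish by the centralizer-surjectivity argument at the end of Section \ref{(0,0)}---is the same as the paper's (the paper takes $\overline{\Pi}_{n}=\Pi_{n}^{((l,p'))}/\mathrm{Ker}(\Delta_{n}^{(l,p')}\to\overline{\Delta}_{n})$ rather than your pull-back, which amounts to the same group). However, there is a genuine gap at the step on which the whole construction rests: the normality of $K_{n-1}:=\mathrm{Ker}(\Delta_{n-1}^{p'}\to\overline{\Delta}_{n-1})$ in $\Pi_{n-1}^{(p')}$, and likewise the $\Pi$-stability of $\mathrm{Ker}(\Delta_{n}^{(l,p')}\to\overline{\Delta}_{n})$ that your ``functoriality/propagation'' argument for the triviality of the outer $I_{K}$-action on $\overline{\Delta}_{n}$ silently uses. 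Your stated reason---that the kernel ``arises from a pull-back along an outer action factoring through $G_{K}$''---does not apply: in the proposition $\overline{\Delta}_{n-1}$ is an arbitrary center-free quotient with no assumed compatibility with the Galois action (and in the intended application, Proposition \ref{higher}, it is built from a non-canonical open subgroup $\Delta_{n-1}'$ of $\Delta_{n-1}$), so its kernel is neither characteristic nor visibly normal in $\Pi_{n-1}^{(p')}$; without this, $\overline{\Pi}_{n-1}$, hence your $\overline{\Pi}_{n}$, is not even defined.

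The missing ingredient is precisely the opening move of the paper's proof: since the outer actions $I_{K}\to\mathrm{Out}(\Delta_{n}^{p'})$ and $I_{K}\to\mathrm{Out}(\Delta_{n-1}^{p'})$ are trivial (the standing hypothesis, transported to $X_{n-1}$ as in Section \ref{reduction}) and $K$ is strictly henselian, one gets $\Pi_{n}^{((l,p'))}=\Delta_{n}^{(l,p')}\,Z_{\Pi_{n}^{((l,p'))}}(\Delta_{n}^{(l,p')})$ and $\Pi_{n-1}^{(p')}=\Delta_{n-1}^{p'}\,Z_{\Pi_{n-1}^{(p')}}(\Delta_{n-1}^{p'})$; consequently every normal subgroup of $\Delta_{n-1}^{p'}$ (resp.\ of $\Delta_{n}^{(l,p')}$) is automatically normal in $\Pi_{n-1}^{(p')}$ (resp.\ in $\Pi_{n}^{((l,p'))}$). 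This single observation legitimizes the quotients $\overline{\Pi}_{n-1}$ and $\overline{\Pi}_{n}$, and, pushed through the quotient, yields $\overline{\Pi}_{n}=\overline{\Delta}_{n}\,Z_{\overline{\Pi}_{n}}(\overline{\Delta}_{n})$, which is all that your propagation step is really needed for. With that supplied, the rest of your argument coincides with the paper's; note in passing that center-freeness of $\overline{\Delta}_{n}$ is not actually required---only the hypothesized center-freeness of $\overline{\Delta}_{n-1}$ enters, namely in splitting $\overline{\Pi}_{n-1}$ and in proving that $Z_{\overline{\Pi}_{n}}(\overline{\Delta}_{n})$ surjects onto $Z_{\overline{\Pi}_{n-1}}(\overline{\Delta}_{n-1})$.
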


\begin{proof}
Since the outer actions $I_{K} \rightarrow \mathrm{Out}(\Delta_{n}^{p'})$ and $I_{K} \rightarrow \mathrm{Out}(\Delta_{n-1}^{p'})$ are trivial, we have $\Pi_{n}^{((l,p'))} = \Delta_{n}^{(l,p')}Z_{\Pi_{n}^{((l,p'))}}(\Delta_{n}^{(l,p')})$ and $\Pi_{n-1}^{(p')} = \Delta_{n-1}^{p'} Z_{\Pi_{n-1}^{(p')}}(\Delta_{n-1}^{p'})$.
Therefore, the normal subgroup $K_{n}\deq\mathrm{Ker}(\Delta_{n}^{(l,p')} \to \overline{\Delta}_{n})$ (resp.\,$K_{n-1}\deq\mathrm{Ker}(\Delta_{n-1}^{p'} \to \overline{\Delta}_{n-1})$) of $\Delta^{(l,p')}_{n}$ (resp.\,$\Delta^{p'}_{n-1}$) is also a normal subgroup of $\Pi_{n}^{((l,p'))}$ (resp. $\Pi_{n-1}^{(p')}$).
Thus, we have quotient groups
$\overline{\Pi}_{n} \deq \Pi_{n}^{((l,p'))}/K_{n}$ and $\overline{\Pi}_{n-1} \deq \Pi_{n-1}^{(p')}/K_{n-1}$ such that the diagram
\[
\xymatrix{ 
1 \ar[r] & \Delta_{n,n-1}^{l} \ar[r] \ar[d]
& \overline{\Delta}_{n} \ar[r] \ar[d]
& \overline{\Delta}_{n-1} \ar[r] \ar[d] & 1\\
1 \ar[r] & \Delta_{n,n-1}^{l} \ar[r]
& \overline{\Pi}_{n} \ar[r]
& \overline{\Pi}_{n-1} \ar[r] & 1
}
\]
commutes.
Since the group $\overline{\Delta}_{n-1}$ is center-free, we have a decomposition $\overline{\Pi}_{n-1} = \overline{\Delta}_{n-1} \times Z_{\overline{\Pi}_{n-1}}(\overline{\Delta}_{n-1} )$.
If we replace $\Delta_{n-1}$, $\Pi_{n-1}$, and $\pi_{1}(\mathfrak{X}_{n-1}, \overline{t})$ in Lemma \ref{lem2} by $\overline{\Delta}_{n-1}$, $\overline{\Pi}_{n-1}$, and $\pi_{1}(\mathfrak{X}_{n-1}, \overline{t})/\mathrm{Im}(K_{n-1}\to\pi_{1}(\mathfrak{X}_{n-1}, \overline{t}))$, it follows that the image of the group $I$ in $\overline{\Pi}_{n-1}$ is contained in $Z_{\overline{\Pi}_{n-1}}(\overline{\Delta}_{n-1})$.
Since the homomorphism $\overline{\Pi}_{n} \rightarrow \overline{\Pi}_{n-1}$ is surjective and compatible with the homomorphism $\overline{\Delta}_{n} \rightarrow \overline{\Delta}_{n-1}$, the image of $Z_{\overline{\Pi}_{n}}(\overline{\Delta}_{n})$ in $\overline{\Pi}_{n-1}$ coincides with $Z_{\overline{\Pi}_{n-1}}(\overline{\Delta}_{n-1})$.
Thus, the outer action $I \rightarrow \mathrm{Out}\,{\Delta}_{n,n-1}^{l}$, which factors through $Z_{\overline{\Pi}_{n-1}}(\overline{\Delta}_{n-1}) \rightarrow \mathrm{Out}\,{\Delta}_{n,n-1}^{l}$, is trivial (cf.\,the proof in the end of Section \ref{(0,0)}).
\end{proof}

\begin{lem}
Assume that $n \geq 3$. Let $\Delta_{n-1}'$ be an open normal subgroup of $\Delta_{n-1}$.
Write $\Delta_{i}'$ (resp.\,$\Delta_{n}'$) for the images of $\Delta_{n-1}'$ in $\Delta_{i}$ for $1 \leq i \leq n-2$ (resp.\,the inverse image of $\Delta_{n-1}'$ in $\Delta_{n}$), $\Delta_{i,i-1}'$ for the inverse images of $\Delta_{i}'$ in $\Delta_{i,i-1}$ for $2 \leq i \leq n-1$, $\Gamma_{i}$ (resp.\,$\Gamma_{i,i-1}$) for the group $\Delta_{i}/\Delta'_{i}$ (resp.\,$\Delta_{i,i-1}/\Delta'_{i,i-1}$) for $1 \leq i \leq n$ (resp.\,$2 \leq i \leq n-1$), $\Delta_{i}^{(l)}$ (resp.\,$\Delta_{i,i-1}^{(l)}$) for the quotient group of $\Delta_{i}$ (resp.\,$\Delta_{i,i-1}$) defined by Notation-Proposition \ref{prop'} and the exact sequence $1\to \Delta'_{i} \to \Delta_{i} \to\Gamma_{i}\to1$ (resp.\,$1\to \Delta'_{i,i-1} \to \Delta_{i,i-1} \to\Gamma_{i,i-1} \to 1$ ) for $1 \leq i \leq n$ (resp.\,$2 \leq i \leq n-1$).
Suppose that the sequence $1 \to (\Delta_{i,i-1}')^{l} \to (\Delta_{i}')^{l} \to (\Delta_{i-1}')^{l} \to 1$
is exact for each $2 \leq i \leq n-1$.
\begin{enumerate}
\item
The profinite group $(\Delta_{i}')^{l}$ is center-free for each $2 \leq i \leq n-1$.
\item
Write $\tilde{X}_{i}'$ for the Galois covering of $\tilde{X}_{i} \deq X_{i}\times_{\mathrm{Spec}\,K} \mathrm{Spec}\,K^{\mathrm{sep}}$ corresponding to the normal subgroup $\Delta_{i}'$ of $\Delta_{i}$.
Then the composite homomorphism
$\Gamma_{i} (= \mathrm{Aut}(\tilde{X}_{i}'/\tilde{X}_{i})) \hookrightarrow \mathrm{Aut}(\tilde{X}_{i}'/\mathrm{Spec}\, K^{\mathrm{sep}}) \rightarrow \mathrm{Out}((\Delta_{i}')^{l})$ is injective for each $1\leq i \leq n-1$.
\item
Suppose moreover that the composite homomorphism
$$\Delta_{n-1}' \hookrightarrow \Delta_{n-1} \to \mathrm{Out}(\Delta_{n,n-1}^{l}) \rightarrow \mathrm{Aut}(\Delta_{n,n-1}^{l,\mathrm{ab}}/l\Delta_{n,n-1}^{l,\mathrm{ab}}),$$
where $\Delta_{n-1} \to \mathrm{Out}(\Delta_{n,n-1}^{l} )$ is constructed in Proposition \ref{outact}, is trivial and $\Gamma_{n-1}$ is of order prime-to-$p$. 
Then the group $\Delta_{n-1}^{(l)}$ is center-free and we have the following commutative diagram with exact horizontal lines:
$
\xymatrix{ 
1 \ar[r] & \Delta_{n,n-1}^{l} \ar[r]  \ar[d]
& \Delta_{n}^{(l,p')} \ar[r] \ar[d]
& \Delta_{n-1}^{p'}\ar[r] \ar[d] & 1\\
1 \ar[r] & \Delta_{n,n-1}^{l} \ar[r]
& \Delta_{n}^{(l)} \ar[r] 
& \Delta_{n-1}^{(l)} \ar[r] & 1.
}
$
\end{enumerate}
\label{injective}
\end{lem}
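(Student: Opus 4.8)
The plan is to prove the three assertions in order, using the exactness hypothesis on the pro-$l$ completions of the sequences $1 \to (\Delta_{i,i-1}')^{l} \to (\Delta_{i}')^{l} \to (\Delta_{i-1}')^{l} \to 1$ together with the machinery already in place, chiefly Lemma \ref{Ander}, Lemma \ref{lem1}, Lemma \ref{lem2}, Proposition \ref{outact}, and the specialization isomorphism Theorem \ref{isom2}. For part 1, I would argue by induction on $i$: the base case is $(\Delta_{1}')^{l}$, which is the pro-$l$ completion of an open subgroup of the surface group $\Delta_{1}$ and hence center-free (a hyperbolic curve group and all its finite-index subgroups are hyperbolic curve groups, so center-freeness follows from \cite[Proposition 1.11]{Tama1}). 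For the inductive step, $(\Delta_{i,i-1}')^{l}$ is likewise center-free (again an open subgroup of a hyperbolic curve group, pro-$l$-completed), and the exactness of $1 \to (\Delta_{i,i-1}')^{l} \to (\Delta_{i}')^{l} \to (\Delta_{i-1}')^{l} \to 1$ exhibits $(\Delta_{i}')^{l}$ as an extension of a center-free group by a center-free group; as recalled in Section \ref{(0,0)}, such an extension is center-free.

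For part 2, I would use that $\tilde X_{i}'\to\tilde X_{i}$ is a finite Galois étale cover with group $\Gamma_{i}=\Delta_{i}/\Delta_{i}'$, so $\Gamma_{i}\hookrightarrow\mathrm{Out}((\Delta_{i}')^{l})$ is exactly the statement that the conjugation action of $\Delta_{i}/\Delta_{i}'$ on $(\Delta_{i}')^{l}$ is faithful modulo inner automorphisms. Since $(\Delta_{i}')^{l}$ is center-free by part 1, the natural map $(\Delta_{i}')^{l}\to\mathrm{Aut}((\Delta_{i}')^{l})$ is injective with image $\mathrm{Inn}$, so it suffices to see that the conjugation map $\Delta_{i}\to\mathrm{Aut}((\Delta_{i}')^{l})$ has kernel exactly $\mathrm{Ker}(\Delta_{i}'\to(\Delta_{i}')^{l})$; equivalently, no nontrivial element of $\Gamma_{i}$ acts by an inner automorphism coming from $(\Delta_{i}')^{l}$ itself. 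For $i=1$ this is the standard faithfulness of the outer action of a finite quotient of a hyperbolic curve group on the pro-$l$ geometric fundamental group of the corresponding cover (which can be extracted from the injectivity portion of the results of Oda--Tamagawa, or directly from the fact that $\Delta_{1}'$ is its own normalizer issue being controlled by center-freeness). For $2\le i\le n-1$ I would propagate this through the exact sequences $1\to(\Delta_{i,i-1}')^{l}\to(\Delta_{i}')^{l}\to(\Delta_{i-1}')^{l}\to1$ and the compatible filtration of $\Gamma_{i}$, using that an element of $\Gamma_{i}$ acting trivially (up to inner) on $(\Delta_{i}')^{l}$ acts trivially up to inner both on the normal subgroup $(\Delta_{i,i-1}')^{l}$ and on the quotient $(\Delta_{i-1}')^{l}$, and then invoking the $i=1$ case and the one-dimensional faithfulness on fibers.

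For part 3, the hypothesis is that the composite $\Delta_{n-1}'\hookrightarrow\Delta_{n-1}\to\mathrm{Out}(\Delta_{n,n-1}^{l})\to\mathrm{Aut}(\Delta_{n,n-1}^{l,\mathrm{ab}}/l\Delta_{n,n-1}^{l,\mathrm{ab}})$ is trivial and $\Gamma_{n-1}$ has order prime to $p$. I plan to apply Lemma \ref{Ander} to the diagram with $N=\Delta_{n,n-1}$, $\tilde G=\Delta_{n}'$, $\tilde H=\Delta_{n-1}'$, $G=\Delta_{n}$, $H=\Delta_{n-1}$ (the conjugation action on $\Delta_{n,n-1}^{l}$ factoring through $\Delta_{n}$ by Proposition \ref{outact}): condition (5) of Lemma \ref{Ander} for $\Delta_{n-1}'$ is precisely the triviality hypothesis (triviality on $\Delta_{n,n-1}^{l,\mathrm{ab}}/l$ certainly forces the action to factor through the pro-$l$ completion, indeed through the trivial group), so condition (3) holds, i.e.\ $\Delta_{n-1}'\to\mathrm{Aut}(\Delta_{n,n-1}^{l})$ factors through $(\Delta_{n-1}')^{l}$; in fact since the restriction to $\Delta_{n-1}'$ is trivial on a $P.~$Hall normal pro-$l$ subgroup and on the prime-to-$l$ quotient it lands in a finite prime-to-$p$ group, one gets that the outer action of $\Delta_{n-1}$ on $\Delta_{n,n-1}^{l}$ factors through a finite prime-to-$p$ quotient of $\Gamma_{n-1}$, hence through $\Delta_{n-1}^{(l)}$ as defined via the exact sequence $1\to\Delta_{n-1}'\to\Delta_{n-1}\to\Gamma_{n-1}\to1$. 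This yields the commutative diagram with exact bottom row $1\to\Delta_{n,n-1}^{l}\to\Delta_{n}^{(l)}\to\Delta_{n-1}^{(l)}\to1$ by pulling back $\mathrm{Aut}(\Delta_{n,n-1}^{l})\to\mathrm{Out}(\Delta_{n,n-1}^{l})$ along $\Delta_{n-1}^{(l)}\to\mathrm{Out}(\Delta_{n,n-1}^{l})$, using center-freeness of $\Delta_{n,n-1}^{l}$ to identify $\mathrm{Inn}(\Delta_{n,n-1}^{l})\cong\Delta_{n,n-1}^{l}$. Finally, center-freeness of $\Delta_{n-1}^{(l)}$: it is an extension $1\to(\Delta_{n-1}')^{l}\to\Delta_{n-1}^{(l)}\to\Gamma_{n-1}\to1$ with $(\Delta_{n-1}')^{l}$ center-free by part 1; an element of the center of $\Delta_{n-1}^{(l)}$ maps to a central element of $\Gamma_{n-1}$ acting trivially up to inner on $(\Delta_{n-1}')^{l}$, which is $1$ by part 2 applied with $i=n-1$, so it lies in $(\Delta_{n-1}')^{l}$ and centralizes it, hence is $1$. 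The main obstacle I anticipate is part 2 in the relative range $2\le i\le n-1$: one must carefully track how faithfulness of the finite quotient action transports through the (merely right-exact in general, but here exact by hypothesis) pro-$l$ completions of the fibration sequences, and in particular rule out the ``diagonal'' phenomenon where a nontrivial $\gamma\in\Gamma_{i}$ acts trivially modulo inner automorphisms while acting by a genuinely outer automorphism on neither the fiber nor the base separately — this is where the compatibility of the $\Gamma_{i}$'s and the $i=1$ faithfulness must be combined with some care.
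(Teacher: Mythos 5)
Your overall architecture matches the paper's (part 1 via Tamagawa's center-freeness plus closure of center-freeness under extensions; part 2 by induction starting from a curve-level faithfulness statement; part 3 via Lemma \ref{Ander}/P.~Hall and center-freeness), but the two places where the real work happens are left open. In part 2 you yourself flag the ``diagonal phenomenon'' and do not resolve it, and this is precisely the key step: if $\gamma\in\Gamma_{i}$ acts on $(\Delta_{i}')^{l}$ as conjugation by some $h\in(\Delta_{i}')^{l}$, the induced automorphism of the normal subgroup $(\Delta_{i,i-1}')^{l}$ is conjugation by $h$, which is \emph{not} a priori inner in $(\Delta_{i,i-1}')^{l}$, so you cannot simply ``invoke the one-dimensional faithfulness on fibers.'' The paper closes this as follows: first use the induction hypothesis for the base to reduce to $\gamma\in\Gamma_{i,i-1}$; then, lifting $\gamma$ to $g\in(\Delta_{i,i-1})^{(l)}$, observe that the image of $g$ in $(\Delta_{i-1})^{(l)}$ is trivial, so the image of $h$ in $(\Delta_{i-1}')^{l}$ is central, hence trivial by part 1; therefore $h\in(\Delta_{i,i-1}')^{l}$, the action of $\gamma$ on $(\Delta_{i,i-1}')^{l}$ is genuinely inner, and the curve-level injectivity (applied to the fiber $\tilde{X}_{i,i-1}'\to\tilde{X}_{i,i-1}$ over the generic point of $\tilde{X}_{i-1}'$) gives $\gamma=1$. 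Without this center-freeness trick your induction does not close. Moreover, your base case $i=1$ is asserted rather than proved: ``extracted from the injectivity portion of Oda--Tamagawa'' is not where this comes from, and ``controlled by center-freeness'' cannot suffice (faithfulness of a finite outer action is a genuine fact about curves, not about center-free profinite groups). The paper proves injectivity of $\Gamma_{1}\to\mathrm{Aut}((\Delta_{1}')^{l,\mathrm{ab}})$ geometrically: an automorphism acting trivially there fixes the cusps, and then one concludes by \cite[Theorem 1.13]{DM} when the compactified cover has genus $\geq 2$, and by the automorphism theory of rational and elliptic curves with marked points in genus $\leq 1$.

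In part 3 there are two problems. First, your instantiation of Lemma \ref{Ander} with $\tilde{G}=\Delta_{n}'$, $\tilde{H}=\Delta_{n-1}'$, $G=\Delta_{n}$, $H=\Delta_{n-1}$ violates its hypotheses: the vertical maps there must be surjective (inclusions of open subgroups are not), and the top row must be exact with $N\to\tilde{G}$ injective, whereas $\Delta_{n,n-1}\to\Delta_{n}$ need not be injective in positive characteristic. The correct application takes the lower row $\Delta_{n,n-1}\to\Delta_{n}'\to\Delta_{n-1}'\to 1$ and the upper row built from the generic-fiber fundamental groups as in Proposition \ref{outact}; the triviality hypothesis plus P.~Hall then yields exactness of $1\to\Delta_{n,n-1}^{l}\to(\Delta_{n}')^{l}\to(\Delta_{n-1}')^{l}\to 1$. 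Second, and more seriously, you produce your bottom row as the pullback $\mathrm{Aut}(\Delta_{n,n-1}^{l})\times_{\mathrm{Out}(\Delta_{n,n-1}^{l})}\Delta_{n-1}^{(l)}$, but the lemma asserts exactness of $1\to\Delta_{n,n-1}^{l}\to\Delta_{n}^{(l)}\to\Delta_{n-1}^{(l)}\to 1$ with $\Delta_{n}^{(l)}$ the \emph{specific} quotient $\Delta_{n}/\mathrm{Ker}(\Delta_{n}'\to(\Delta_{n}')^{l})$, and this identification is needed later (Proposition \ref{bargp} requires quotients of $\Delta_{n}^{(l,p')}$ and $\Delta_{n-1}^{p'}$). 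You never show the natural surjection from $\Delta_{n}^{(l)}$ to your pullback is injective, and doing so is essentially equivalent to the primed exact sequence you skipped; the paper instead obtains the bottom row from that sequence by a diagram chase using the exact columns with $\Gamma_{n-1}$, and then gets the vertical arrows from the fact that $\Gamma_{n-1}$ is of order prime to $p$, so $\Delta_{n-1}^{(l)}$ and $\Delta_{n}^{(l)}$ are pro-prime-to-$p$. Your argument for center-freeness of $\Delta_{n-1}^{(l)}$ (via parts 1 and 2) is correct and agrees with the paper.
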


\begin{proof}
Assertion 1 follows from {\cite[Proposition 1.11]{Tama1}} and the fact that an extension group of center-free groups is also center-free.

Next, We prove assertion 2.
Note that, if the characteristic of $K$ is $0$, this is a special case of {\cite[Proposition 3.2]{Saw}}.
We will show the injectivity of the outer action $\Gamma_{i} \to \mathrm{Out}((\Delta_{i}')^{l})$ by induction on $i$.
To show that the outer action $\Gamma_{1}\to  \mathrm{Out}((\Delta_{1}')^{l})$ is injective, it suffices to show that the action $\Gamma_{1}\to \mathrm{Aut}((\Delta_{1}')^{l,\mathrm{ab}})$ is injective.
Let $K^{\mathrm{alg}}$ be an algebraic closure of $K^{\mathrm{sep}}$ and write $Y$ (resp.\,$Y'$) for the scheme $\tilde{X}_{1} \times_{\mathrm{Spec}\,K^{\mathrm{sep}}}\mathrm{Spec}\,K^{\mathrm{alg}}$ (resp.\,$\tilde{X}'_{1} \times_{\mathrm{Spec}\,K^{\mathrm{sep}}}\mathrm{Spec}\,K^{\mathrm{alg}}$).
There exists a hyperbolic curve $(\overline{Y'},E')$ over $K^{\mathrm{alg}}$ such that $\overline{Y'}\setminus E'$ is isomorphic to $Y'$ over $K^{\mathrm{alg}}$.
Such a pair always exists since we work over $K^{\mathrm{alg}}$.
Let $\phi$ be an element of the group $\Gamma_{1}(= \mathrm{Aut}(\tilde{X}_{1}'/\tilde{X}_{1}) = \mathrm{Aut}(Y'/Y))$ whose image in the group
$$\mathrm{Aut}((\Delta_{1}')^{l,\mathrm{ab}}) (= \mathrm{Aut}((\pi_{1}(\tilde{X}_{1}',\ast)^{l,\mathrm{ab}}) = \mathrm{Aut}((\pi_{1}(Y',\ast)^{l,\mathrm{ab}}))$$
is trivial.
Here, $\ast$ is a geometric point of $Y'$.
Then $\phi$ induces the identity on $E'$.
Therefore, if the genus of $\overline{Y'}$ is equal to or less than $1$ (resp.\,equal to or more than $2$), $\phi$ is the identity by the theory of automorphisms of rational curves and elliptic curves (resp.\,by {\cite[Theorem 1.13]{DM}}).

Suppose that assertion 2 holds for each $1\leq i\leq n-2$.
Write $\mathrm{Aut}((\Delta_{n-1}')^{l},(\Delta_{n-2}')^{l})$ for the subgroup of $\mathrm{Aut}((\Delta_{n-1}')^{l})$ consisting of automorphisms of $(\Delta_{n-1}')^{l}$ inducing automorphisms of the quotient group $(\Delta_{n-2}')^{l}$ and $\mathrm{Out}((\Delta_{n-1}')^{l},(\Delta_{n-2}')^{l})$ for the quotient group of $\mathrm{Aut}((\Delta_{n-1}')^{l},(\Delta_{n-2}')^{l})$ by the inner subgroup $\mathrm{Inn}((\Delta_{n-1}')^{l})$.
 Then we have the following diagram:
 \[
\xymatrix{ 
\Gamma_{n-1} \ar[r] \ar[d]
& \mathrm{Out}((\Delta_{n-1}')^{l},(\Delta_{n-2}')^{l}) \ar@{^{(}-_>}[r] \ar[d]
& \mathrm{Out}((\Delta_{n-1}')^{l})\\
\Gamma_{n-2}\ar[r]
& \mathrm{Out}((\Delta_{n-2}')^{l}),
&
}
\]
where the homomorphism $\Gamma_{n-2} \to \mathrm{Out}((\Delta_{n-2}')^{l})$ is injective by induction hypothesis.
Write $\zeta'$ for the spectrum of a separable closure of the function field of $\tilde{X}_{n-2}'$, $\tilde{X}_{n-1,n-2}$ for the scheme $\tilde{X}_{n-1} \times_{\tilde{X}_{n-2}} \zeta'$, and $\tilde{X}_{n-1,n-2}'$ for the scheme $\tilde{X}_{n-1}' \times_{\tilde{X}_{n-2}'} \zeta'$.
Then the Galois covering $\tilde{X}_{n-1,n-2}' \rightarrow \tilde{X}_{n-1,n-2}$ corresponds to the normal subgroup $\Delta_{n-1,n-2}' \subset \Delta_{n-1,n-2}$. Therefore, we obtain the following diagram:
 \[
\xymatrix@=10pt{
& \mathrm{Aut}(\tilde{X}_{n-1,n-2}'/\tilde{X}_{n-1,n-2}) \ar@{^{(}-_>}[r] \ar@{=}[d] & \mathrm{Aut}(\tilde{X}_{n-1}'/\tilde{X}_{n-1}) \ar[r] \ar@{=}[d] & \mathrm{Out}((\Delta_{n-1}')^{l})\\
1 \ar[r] & \Gamma_{n-1,n-2} \ar[r] & \Gamma_{n-1} \ar[r] &\Gamma_{n-2},
}
\]
where the lower horizontal line is exact.
Thus, it suffices to show that the composite homomorphism $\Gamma_{n-1,n-2} \to \Gamma_{n-1} \to \mathrm{Out}((\Delta_{n-1}')^{l})$ is injective. 

Consider the following diagram with exact horizontal lines and vertical lines:
\begin{equation}
\xymatrix@=10pt{
& 1\ar[d] & 1\ar[d] & 1\ar[d] & \\
1 \ar[r] & (\Delta_{n-1,n-2}')^{l}  \ar[r] \ar[d] & (\Delta_{n-1}')^{l}  \ar[r] \ar[d] & (\Delta_{n-2}')^{l} \ar[r] \ar[d] & 1 \\
1 \ar[r] & (\Delta_{n-1,n-2})^{(l)}  \ar[r] \ar[d] & (\Delta_{n-1})^{(l)}  \ar[r] \ar[d] & (\Delta_{n-2})^{(l)} \ar[r] \ar[d] & 1\\
1 \ar[r]
&\Gamma_{n-1,n-2} \ar[r] \ar[d] 
&\Gamma_{n-1} \ar[r] \ar[d]
&\Gamma_{,n-2}\ar[r] \ar[d]
& 1\\
& 1 & 1 & 1. &
}
\label{nnewdiagram}
\end{equation}
Here, the second horizontal line in the diagram (\ref{nnewdiagram}) is exact by nine lemma.
Since the homomorphism $\Gamma_{n-1,n-2}\to \mathrm{Out}((\Delta_{n-1,n-2}')^{l})$ is injective by the argument of the first step of the induction, it suffices to show that an element $g$ of $(\Delta_{n-1,n-2})^{(l)}$, whose action on $(\Delta_{n-1}')^{l}$ is same as the inner action of some $h \in (\Delta_{n-1}')^{l}$, induces an inner action on $(\Delta_{n-1,n-2}')^{l}$.
The image of $g$ in $(\Delta_{n-2})^{(l)}$ is trivial, which induces a trivial action on $(\Delta_{n-2}')^{l}$.
Since $(\Delta_{n-2}')^{l}$ is center-free, $h$ is in $(\Delta_{n-1,n-2}')^{l}$.
Hence, we finished the proof of assertion 2.

Finally, we prove assertion 3.
We have a diagram with exact horizontal lines
\[
\xymatrix{ 
\Delta_{n,n-1} \ar[r]  \ar@{=}[d]
& \Delta_{n}' \ar[r] \ar[d]
& \Delta_{n-1}' \ar[r] \ar[d] & 1\\
\Delta_{n,n-1} \ar[r] 
& \Delta_{n} \ar[r] \ar[d]
& \Delta_{n-1} \ar[r] \ar[d] & 1\\
& \Gamma_{n-1} \ar@{=}[r]
& \Gamma_{n-1},
}
\]
which induces a commutative diagram
\begin{equation}
\xymatrix{ 
1 \ar[r] & \Delta_{n,n-1}^{l} \ar[r]  \ar[d]
& (\Delta_{n}')^{l} \ar[r] \ar[d]
& (\Delta_{n-1}')^{l} \ar[r] \ar[d] & 1\\
1 \ar[r] & \Delta_{n,n-1}^{l} \ar[r]
& \Delta_{n}^{(l)} \ar[r] \ar[d]
& \Delta_{n-1}^{(l)} \ar[r] \ar[d] & 1\\
& & \Gamma_{n-1} \ar@{=}[r]
& \Gamma_{n-1}.
}
\label{use Lem6.1}
\end{equation}
Since the homomorphism $\Delta'_{n-1} \rightarrow \mathrm{Aut}(\Delta_{n,n-1}^{l,\mathrm{ab}}/l\Delta_{n,n-1}^{l,\mathrm{ab}})$ is trivial and the group $\Delta_{n,n-1}^{l}$ is center-free, the first horizontal line of the diagram (\ref{use Lem6.1}) is exact by Lemma \ref{Ander}.
(Note that we can construct the diagram of the form (\ref{AndSaw}) as in the statement of Proposition \ref{outact} with $N=\Delta_{n-1,n},G=\Delta_{n}'$, and $H=\Delta_{n-1}'$.)
Hence, we see the exactness of the second line by diagram chasing.
Since $(\Delta_{n-1}')^{l}$ is center-free by assertion 1 and the outer action $\Gamma_{n-1} \to\mathrm{Out}((\Delta_{n-1}')^{l})$ is injective by assertion 2, the group $\Delta_{n-1}^{(l)}$ is center-free.
The order of $\Gamma_{n-1}$ is prime to $p$, and hence $\Delta_{n-1}^{(l)}$ is a pro-$p'$ group. Therefore, we obtain the surjective homomorphisms $\Delta_{n-1}^{p'} \rightarrow \Delta_{n-1}^{(l)}$ and $\Delta_{n}^{(l,p')} \rightarrow \Delta_{n}^{(l)}$.
Thus, we have the desired commutative diagram with exact horizontal lines
\[
\xymatrix{ 
1 \ar[r] & \Delta_{n,n-1}^{l} \ar[r]  \ar[d]
& \Delta_{n}^{(l,p')} \ar[r] \ar[d]
& \Delta_{n-1}^{p'}\ar[r] \ar[d] & 1\\
1 \ar[r] & \Delta_{n,n-1}^{l} \ar[r]
& \Delta_{n}^{(l)} \ar[r] 
& \Delta_{n-1}^{(l)} \ar[r] & 1.
}
\]
\end{proof}

Now we prove Theorem \ref{higher,main}.

\begin{prop}
Assume that $n \geq 3$ and $p \neq 2$.
Let $l \neq p$ be a prime number.
Define a function $f_{b,l}(m)$ for $m \geq 3$ in the following way:
\begin{itemize}
\item
For $m=3$, $f_{b,l}(3) = l^{b^{2}}$.
\item
For $m \geq 3$,
$$f_{b,l}(m+1)= (f_{b,l}(m)) \times (l^{b^{2} \times f_{b,l}(m)^{2}})^{f_{b,l}(m)}.$$
\end{itemize}
If $p > l^{b \times f_{b,l}(n)}$, (B) of Theorem \ref{higher,main} implies (A) of Theorem \ref{higher,main}.
\label{higher}
\end{prop}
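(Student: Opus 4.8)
The plan is to reduce, via Proposition \ref{bargp} and Lemma \ref{injective}, to the construction of a single open normal subgroup of $\Delta_{n-1}$ of sufficiently small index, and then to build that subgroup by induction on $n$. First, by Proposition \ref{claim} it suffices to prove that the outer action $I\to\mathrm{Out}(\Delta_{n,n-1}^{l})$ is trivial; and by Proposition \ref{bargp} this follows once we produce a quotient $\Delta_{n}^{(l,p')}\twoheadrightarrow\overline{\Delta}_{n}$ and a center-free quotient $\Delta_{n-1}^{p'}\twoheadrightarrow\overline{\Delta}_{n-1}$ fitting into the commutative ladder with exact rows required there. But Lemma \ref{injective}.3 delivers exactly such a ladder, with $\overline{\Delta}_{n}=\Delta_{n}^{(l)}$ and $\overline{\Delta}_{n-1}=\Delta_{n-1}^{(l)}$ (the latter center-free), as soon as one is handed an open normal subgroup $\Delta_{n-1}'\trianglelefteq\Delta_{n-1}$ satisfying: (a) for each $2\le i\le n-1$ the sequence $1\to(\Delta_{i,i-1}')^{l}\to(\Delta_{i}')^{l}\to(\Delta_{i-1}')^{l}\to1$ is exact; (b) the composite $\Delta_{n-1}'\hookrightarrow\Delta_{n-1}\to\mathrm{Out}(\Delta_{n,n-1}^{l})\to\mathrm{Aut}(\Delta_{n,n-1}^{l,\mathrm{ab}}/l\Delta_{n,n-1}^{l,\mathrm{ab}})$ is trivial; and (c) $[\Delta_{n-1}:\Delta_{n-1}']$ is prime to $p$. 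Hence the whole proof comes down to exhibiting such a $\Delta_{n-1}'$ with $[\Delta_{n-1}:\Delta_{n-1}']\le l^{b\cdot f_{b,l}(n)}$, after which (c) is forced by the hypothesis $p>l^{b\cdot f_{b,l}(n)}$.

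To build $\Delta_{n-1}'$ I would induct on $n\ge3$. The observation making this possible is that each of (a) and (b) is of the form ``a finite quotient acts trivially on a finite-dimensional $\F_{l}$-vector space'': for (b) the space is $\Delta_{n,n-1}^{l,\mathrm{ab}}/l\Delta_{n,n-1}^{l,\mathrm{ab}}$, whose $\F_{l}$-dimension is at most the first Betti number $b$ of the generic fiber of $X_{n}\to X_{n-1}$; and by Lemma \ref{Ander}, each instance of (a) holds as soon as the induced linear action of $\Delta_{i-1}'$ on $(\Delta_{i,i-1}')^{\mathrm{ab}}\otimes\F_{l}$ is trivial. So $\Delta_{n-1}'$ is to be assembled as an intersection of kernels of finitely many explicit mod-$l$ representations, built up level by level: feed the truncated tower $X_{n-1}\to\cdots\to X_{0}$ (of length $n-1$) into the case $n-1$ of the statement to get a first subgroup for which (a) holds at levels $2\le i\le n-2$, then shrink it to enforce the level-$(n-1)$ instance of (a) together with (b), and finally pass to its normal core in $\Delta_{n-1}$. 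For $n=3$ the same recipe applies verbatim: take the kernel of the action on $\Delta_{3,2}^{l,\mathrm{ab}}/l\Delta_{3,2}^{l,\mathrm{ab}}$ (index $<l^{b^{2}}=f_{b,l}(3)$), then the kernel of the resulting action on $(\Delta_{2,1}')^{\mathrm{ab}}\otimes\F_{l}$, then a normal core.

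The hard part will be keeping the index below $l^{b\cdot f_{b,l}(n)}$; this is the main obstacle. The governing elementary estimate is that an open subgroup of index $d$ of the \'etale fundamental group of a hyperbolic curve of first Betti number $\le b$ is again the fundamental group of a hyperbolic curve, now of first Betti number $\le b\,d$ (Nielsen--Schreier in the affine case, Riemann--Hurwitz in the proper case). Hence each abelianization $(\Delta_{i,i-1}')^{\mathrm{ab}}\otimes\F_{l}$ occurring in (a) has $\F_{l}$-dimension at most $b\cdot[\Delta_{i,i-1}:\Delta_{i,i-1}']\le b\cdot[\Delta_{n-1}:\Delta_{n-1}']$, so the representation to be killed has image of order $<l^{(b\,[\Delta_{n-1}:\Delta_{n-1}'])^{2}}$, while replacing a subgroup of index $M$ by its normal core raises the index by at most the factor $M^{M}$. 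Running these two operations through the $n-2$ levels of the tower, and matching constants, is precisely what yields the recursion $f_{b,l}(m+1)=f_{b,l}(m)\cdot\bigl(l^{b^{2}f_{b,l}(m)^{2}}\bigr)^{f_{b,l}(m)}$; a direct bookkeeping check then gives $[\Delta_{n-1}:\Delta_{n-1}']\le l^{b\cdot f_{b,l}(n)}$. The genuine difficulty is the coupling between the conditions: any shrinking of $\Delta_{n-1}'$ performed to satisfy (b) or a high-level instance of (a) enlarges all the lower fiber abelianizations, so the lower-level instances of (a) must be re-imposed on the smaller group; organizing this feedback so that it terminates after finitely many rounds while respecting the index bound is the crux of the matter, and is exactly what forces $f_{b,l}$ to grow tower-exponentially. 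Finally, specializing to $l=2$ (permissible since $p\ne2$) recovers Theorem \ref{higher,main}.
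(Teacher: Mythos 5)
Your reduction is the same as the paper's: Proposition \ref{claim} reduces everything to the triviality of $I \to \mathrm{Out}(\Delta_{n,n-1}^{l})$, Proposition \ref{bargp} reduces that to producing the ladder of Lemma \ref{injective}.3, and Lemma \ref{injective}.3 reduces that to exhibiting one open normal subgroup $\Delta_{n-1}'\trianglelefteq\Delta_{n-1}$ satisfying your (a), (b), (c). The gap is that the construction of $\Delta_{n-1}'$ --- the actual content of the proposition --- is left open: you yourself flag ``organizing this feedback so that it terminates after finitely many rounds'' as the crux, and give no argument for termination. The paper has no feedback to organize, because it builds the subgroup in a single top-down pass: start with $\Delta_{n-1}^{n-1}=\mathrm{Ker}\bigl(\Delta_{n-1}\to\mathrm{Aut}(\Delta_{n,n-1}^{l,\mathrm{ab}}/l\Delta_{n,n-1}^{l,\mathrm{ab}})\bigr)$, and at each stage $i$ kill the mod-$l$ action of the image $\tilde{\Delta}_{i-1}^{i}$ on $(\Delta_{i,i-1}^{i})^{l,\mathrm{ab}}/l(\Delta_{i,i-1}^{i})^{l,\mathrm{ab}}$, replace the kernel by the maximal subgroup normal in $\Delta_{i-1}$ it contains, and pull back to $\Delta_{n-1}$. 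The point that makes this terminate after exactly one pass is that shrinking by pullback from level $i-1$ leaves the fiber subgroups at levels $\geq i$ unchanged ($\Delta_{j,j-1}^{i-1}=\Delta_{j,j-1}^{i}$, since $\Delta_{j,j-1}$ maps trivially to $\Delta_{i-1}$), and the exact sequences already obtained at those levels persist because the restricted action still has pro-$l$ image (Lemma \ref{Ander}). Your bottom-up order (treat the truncated tower $X_{n-1}\to\cdots\to X_{0}$ first, then shrink at the top) is exactly what creates the re-imposition problem, so the main construction is missing from your proposal.

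Your use of the hypothesis $p>l^{b\,f_{b,l}(n)}$ is also quantitatively off. You propose to get (c) by bounding the total index $[\Delta_{n-1}:\Delta_{n-1}']\le l^{b\,f_{b,l}(n)}<p$, but the recursion you invoke cannot deliver such a bound: already the first kernel has index up to $l^{b^{2}}=f_{b,l}(3)$, and one kill-and-take-normal-core step produces indices of size $f_{b,l}(m+1)=f_{b,l}(m)\,(l^{b^{2}f_{b,l}(m)^{2}})^{f_{b,l}(m)}$, which vastly exceeds $l^{b\,f_{b,l}(m)}$; the total index of the paper's $\Delta_{n-1}'$ is of this tower-exponential size and is in general not $<p$. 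In the paper the numbers $f_{b,l}(m)$ bound the successive quotients $\Delta_{n-j}/\Delta_{n-j}^{n-j}$ (Claim \ref{newclaim}), hence the $\F_{l}$-dimensions $\le b\,f_{b,l}(j+2)\le b\,f_{b,l}(n)$ of the abelianized fibers; the hypothesis $p>l^{b\,f_{b,l}(n)}$ is used only to guarantee that $\#\mathrm{Aut}$ of an $\F_{l}$-space of such dimension --- whose prime divisors are $l$ and divisors of $l^{i}-1$ with $i\le b\,f_{b,l}(n)$ --- is prime to $p$, and prime-to-$p$-ness of $\Gamma_{n-1}$ is then verified stage by stage, not from a global index bound against $p$. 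As written, your bookkeeping cannot close, so both the construction and the verification of (c) need to be redone along the paper's lines.
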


\begin{proof}
If we can find an open normal subgroup $\Delta_{n-1}'$ of $\Delta_{n-1}$ satisfying the assumptions of Lemma \ref{injective}.3, the groups $\Delta_{n}^{(l)}$ and $\Delta_{n-1}^{(l)}$ in Lemma \ref{injective}.3 work as $\overline{\Delta}_{n}$ and $\overline{\Delta}_{n-1}$ in Proposition \ref{bargp}, and hence (A) holds  by Proposition \ref{claim}.

Let us construct $\Delta_{n-1}'$ in the following.
Fix $2 \leq i \leq n-1$ and assume that there exists an open normal subgroup $\Delta_{n-1}^{i}$ of  $\Delta_{n-1}$  such that the images $ \Delta_{j}^{i}$ of $\Delta_{n-1}^{i}$ in $\Delta_{j}$  for $i \leq j \leq n-1$ and the inverse images $ \Delta_{j,j-1}^{i}$ of $ \Delta_{j}^{i}$ in $\Delta_{j,j-1}$ for $i \leq j \leq n-1$ induce exact sequences
$$1 \to ( \Delta_{j,j-1}^{i})^{l} \to ( \Delta_{j}^{i})^{l} \to ( \Delta_{j-1}^{i})^{l} \rightarrow 1$$ 
for all $i+1 \leq j \leq n-1$. 
Write $\tilde{\Delta}_{i-1}^{i}$ for the image of $ \Delta_{n-1}^{i}$ in $\Delta_{i-1}$.
By Proposition \ref{outact}, the exact sequence
$$\Delta_{i,i-1}^{i} \rightarrow  \Delta_{i}^{i} \rightarrow  \tilde{\Delta}_{i-1}^{i} \rightarrow 1$$
induces homomorphisms $\tilde{\Delta}_{i-1}^{i} \to \mathrm{Out}((\Delta_{i,i-1}^{i})^{l})$.
Write $\alpha$ for the composite homomorphism $\tilde{\Delta}_{i-1}^{i} \to \mathrm{Out}((\Delta_{i,i-1}^{i})^{l}) \to \mathrm{Aut}((\Delta_{i,i-1}^{i})^{l,\mathrm{ab}}/l(\Delta_{i,i-1}^{i})^{l,\mathrm{ab}})$,
$\Delta_{i-1}^{i-1}$ for the maximum normal subgroup of $\Delta_{i-1}$ contained in $\mathrm{Ker}\,\alpha$, $\Delta_{n-1}^{i-1}$ for the inverse image of $\Delta_{i-1}^{i-1}$ in $\Delta_{n-1}^{i}$, $\Delta_{j}^{i-1}$ for the image of $\Delta_{n-1}^{i-1}$ in $\Delta_{j}^{i}$ for each $i \leq j \leq n$, and $\Delta_{j,j-1}^{i-1}$ for the inverse image of $\Delta_{j}^{i-1}$ in $\Delta_{j,j-1}^{i}$ for each $i \leq j \leq n$.
Note that $\Delta_{j}^{i-1}$ coincides with the inverse image of $\Delta_{i-1}^{i-1}$ in $\Delta_{j}^{i}$ for any $i \leq j \leq n$, $\Delta_{j,j-1}^{i} = \Delta_{j,j-1}^{i-1}$ for any $i \leq j \leq n$, and $\Delta_{j}^{i-1}$ is a normal subgroup of $\Delta_{j}$ for any $i \leq j \leq n-1$.
Thus, we have exact sequences of profinite groups
$$ \Delta_{j,j-1}^{i} \rightarrow  \Delta_{j}^{i-1} \rightarrow  \Delta_{j-1}^{i-1} \rightarrow 1$$
which induce the exact sequences
$$1 \rightarrow ( \Delta_{j,j-1}^{i})^{l} \rightarrow ( \Delta_{j}^{i-1})^{l} \rightarrow ( \Delta_{j-1}^{i-1})^{l} \rightarrow 1$$
for all $i \leq j \leq n-1$ by the same argument of the proof of the exactness of the first line of the diagram (\ref{use Lem6.1}).

Starting from $\Delta_{n-1}^{n-1} = \mathrm{Ker}(\Delta_{n-1} \rightarrow \mathrm{Aut}((\Delta_{n,n-1}^{n-1})^{l,\mathrm{ab}}/l(\Delta_{n,n-1}^{n-1})^{l,\mathrm{ab}})$, we reach the construction of $\Delta_{j}^{1}$ for $1 \leq j \leq n-1$.
Then we define $\Delta_{n-1}'$ to be $\Delta_{n-1}^{1}$.
To finish the proof of Proposition \ref{higher}, it suffices to show that the quotient group $\Gamma_{n-1} = \Delta_{n-1}/ \Delta_{n-1}^{1}$ is of order prime to $p$.

We recall the structure of the group $\Gamma_{n-1} = \Delta_{n-1}/ \Delta_{n-1}^{1}$ and show the order of $\Gamma_{n-1}$ is prime to $p$.
Since we have
\begin{equation*}
\begin{split}
\Delta_{n-1}^{1} &= \underset{1 \leq i \leq n-1}{\bigcap} \Delta_{n-1}^{i} \\
&=  \underset{1 \leq i \leq n-2}{\bigcap}(\text{the inverse image of}\, \Delta_{i}^{i}\, \text{in}\, \Delta_{n-1}),
\end{split}
\end{equation*}
it suffices to show that the order of the group $\Delta_{i}/\Delta_{i}^{i}$ is prime-to-$p$ for any $1 \leq i \leq n-2$. 

\begin{claim}
For each $1 \leq j \leq n-1$, the order of the group $\Delta_{n-j}/\Delta_{n-j}^{n-j}$ is prime-to-$p$ and $\leq f_{b,l}(j+2)$.
\label{newclaim}
\end{claim}

We show Claim \ref{newclaim} by induction on $j$.
First, we consider the case $j = 1$.
It holds that
\begin{equation*}
\mathrm{dim}_{\F_{l} }((\Delta_{n,n-1})^{l,\mathrm{ab}}/l(\Delta_{n,n-1})^{l,\mathrm{ab}}\leq b
\end{equation*}
and
\begin{equation*}
\begin{split}
&\#\mathrm{Aut}((\Delta_{n,n-1})^{l,\mathrm{ab}}/l(\Delta_{n,n-1})^{l,\mathrm{ab}})\\
=\,& \underset{0 \leq j \leq \mathrm{dim}_{\F_{l} }(\Delta_{n,n-1})^{l,\mathrm{ab}}/l(\Delta_{n,n-1})^{l,\mathrm{ab}}-1}\prod (l^{\mathrm{dim}_{\F_{l} }(\Delta_{n,n-1})^{l,\mathrm{ab}}/l(\Delta_{n,n-1})^{l,\mathrm{ab}}}-l^{j})\\
\mid \,& \underset{0 \leq j \leq b-1}\prod (l^{b}-l^{j})\\
= \,& l^{(b-1)b /2} \times \underset{1 \leq j \leq b}\prod (l^{j}-1)\\
 \leq \,& l^{(b-1)b /2} \times l^{b(b+1)/2} = l^{b^{2}} = f_{b,l}(3).
\end{split}
\end{equation*}
Here, the notation $a \mid c$ means that $a$ divides $c$.
Since $l^{b} < p$, the group
$$\Delta_{n-1}/\Delta_{n-1}^{n-1} (\hookrightarrow \mathrm{Aut}((\Delta_{n,n-1})^{l,\mathrm{ab}}/l(\Delta_{n,n-1})^{l,\mathrm{ab}}))$$
is of order prime-to-$p$ and $\leq f_{b,l}(3)$.

Fix $1\leq j \leq n-2$ and assume that Claim 6.5 holds for each $i$ satisfying that $1 \leq i \leq j$.
We show that the order of the group $\Delta_{n-j-1}/\Delta_{n-j-1}^{n-j-1}$ is prime-to-$p$ and $\leq f_{b,l}(j+3)$.
We have a surjection
$$\Delta_{n-j}/\Delta_{n-j}^{n-j}  \rightarrow \Delta_{n-j-1}/\tilde{\Delta}_{n-j-1}^{n-j},$$
which shows that the order of $\Delta_{n-j-1}/\tilde{\Delta}_{n-j-1}^{n-j}$ is prime-to-$p$.
Recall that the group $\Delta_{n-j-1}^{n-j-1}$ is the maximum normal subgroup of $\Delta_{n-j-1}$ contained in $\mathrm{Ker}(\tilde{\Delta}_{n-j-1}^{n-j} \rightarrow \mathrm{Aut}(( \Delta_{n-j,n-j-1}^{n-j})^{l,\mathrm{ab}}/l( \Delta_{n-j,n-j-1}^{n-j})^{l,\mathrm{ab}}))$.
To see that the group $\Delta_{n-j-1}/\Delta_{n-j-1}^{n-j-1}$ is of order prime-to-$p$, 
it suffices to show that the image of the homomorphism
$$\tilde{\Delta}_{n-j-1}^{n-j} \rightarrow \mathrm{Aut}((\Delta_{n-j,n-j-1}^{n-j})^{l,\mathrm{ab}}/l(\Delta_{n-j,n-j-1}^{n-j})^{l,\mathrm{ab}})$$
is of order prime-to-$p$.
We have the estimates of the dimension of the $\F_{l}$-linear space

\begin{equation*}
\begin{split}
\mathrm{dim}_{\F_{l} }(\Delta_{n-j,n-j-1}^{n-j})^{l,\mathrm{ab}}/l(\Delta_{n-j,n-j-1}^{n-j})^{l,\mathrm{ab}} & \leq b \times (\Delta_{n-j,n-j-1} : \Delta_{n-j,n-j-1}^{n-j})\\
& \leq b \times (\Delta_{n-j} : \Delta_{n-j}^{n-j})\\
& = b \times f_{b,l}(j+2)
\end{split}
\end{equation*}

and the order of the group

\begin{equation*}
\begin{split}
&\#\mathrm{Aut}((\Delta_{n-j,n-j-1}^{n-j})^{l,\mathrm{ab}}/l(\Delta_{n-j,n-j-1}^{n-j})^{l,\mathrm{ab}})\\
\mid \, & \underset{0 \leq i \leq b \times (\Delta_{n-j,n-j-1} : \Delta_{n-j,n-j-1}^{n-j})-1}\prod (l^{b \times (\Delta_{n-j,n-j-1} : \Delta_{n-j,n-j-1}^{n-j})}-l^{i})\\
\mid \, & \underset{0 \leq i \leq b \times (\Delta_{n-j} : \Delta_{n-j}^{n-j})-1}\prod (l^{b \times (\Delta_{n-j} : \Delta_{n-j}^{n-j})}-l^{i})\\
\mid \, & \underset{0 \leq i \leq b \times f_{b,l}(j+2) - 1}\prod (l^{b \times f_{b,l}(j+2)}-l^{i})\\
= \, & l^{(b \times f_{b,l}(j+2)-1)(b \times f_{b,l}(j+2))/2} \times \underset{1 \leq i \leq b \times f_{b,l}(j+2)}\prod (l^{i}-1)\\
\leq \, & l^{b^{2} \times f_{b,l}(j+2)^{2}} .
\end{split}
\end{equation*}
Since $l^{b \times f_{b,l}(j+2)} \leq l^{b \times f_{b,l}(n)} < p$, the group $\tilde{\Delta}_{n-j-1}^{n-j}/\Delta_{n-j-1}^{n-j-1}$ is of order prime-to-$p$, and hence $\Delta_{n-j-1}/\Delta_{n-j-1}^{n-j-1}$ is also of order prime-to-$p$.
The desired estimate of the order of $\Delta_{n-j-1}/\Delta_{n-j-1}^{n-j-1}$ is obtained in the following way:
\allowdisplaybreaks[4]
\begin{align*}
& (\Delta_{n-j-1} : \Delta_{n-j-1}^{n-j-1}) \notag \\
\leq  \, & (\Delta_{n-j-1} : \tilde{\Delta}_{n-j-1}^{n-j})\notag\\
& \times \#\mathrm{Im}(\tilde{\Delta}_{n-j-1}^{n-j} \rightarrow \mathrm{Aut}(( \Delta_{i,i-1}^{i})^{l,\mathrm{ab}}/l( \Delta_{i,i-1}^{i})^{l,\mathrm{ab}}))^{(\Delta_{n-j-1}:\tilde{\Delta}_{n-j-1}^{n-j})}\notag\\
\leq  \, & (\Delta_{n-j-1}:\tilde{\Delta}_{n-j-1}^{n-j}) \times (\#\mathrm{Aut}((\Delta_{n-j,n-j-1}^{n-j})^{l,\mathrm{ab}}/l(\Delta_{n-j,n-j-1}^{n-j})^{l,\mathrm{ab}}))^{(\Delta_{n-j-1} : \tilde{\Delta}_{n-j-1}^{n-j})}\notag\\
\leq \, &  (\Delta_{n-j-1}:\tilde{\Delta}_{n-j-1}^{n-j}) \times (l^{b^{2} \times f_{b,l}(j+2)^{2}})^{(\Delta_{n-j-1} : \tilde{\Delta}_{n-j-1}^{n-j})}\notag\\
\leq \, & (\Delta_{n-j}:\Delta_{n-j}^{n-j}) \times (l^{b^{2} \times f_{b,l}(j+2)^{2}})^{(\Delta_{n-j} : \Delta_{n-j}^{n-j})}\notag\\
\leq \, & f_{b,l}(j+2) \times (l^{b^{2} \times f_{b,l}(j+2)^{2}})^{f_{b,l}(j+2)} = f_{b,l}(j+3).
\end{align*}
Therefore, Claim \ref{newclaim} holds.
\end{proof}

\section{Appendix 1: Notes on extension of smooth curves}
In this section, we review the extension property of family of proper hyperbolic curves proved in \cite{Mor} and prove a non-proper version of this property (cf.\,Remark \ref{Stix}).
Hoshi informed the author of the proof of Proposition \ref{mor}.
\begin{prop}[cf.\,\cite{Mor}]
Let $S$ be a connected regular Noetherian scheme, $U$ an open subscheme of $S$ such that the codimension of $S \setminus U$ in $S$ is $\geq 2$, $(\overline{X_{U}},D_{U}) \to U$ a hyperbolic curve, and $g$ the genus of $\overline{X_{U}}$.
Then there exists a hyperbolic curve $(\overline{X},D) \to S$ such that $(\overline{X}\times_{S}U,D\times_{S}U)$ is isomorphic to $(\overline{X_{U}},D_{U})$ over $U$, which is unique up to unique isomorphism.
\label{mor}
\end{prop}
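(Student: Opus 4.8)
\emph{Plan.} The strategy is to reformulate the statement as the problem of extending a morphism to a moduli stack of pointed curves across a closed subset of codimension $\geq 2$, and to reduce it to the purity theorem of Moret--Bailly \cite{Mor}. For fixed $g$ and $n$ with $2g+n-2>0$, let $\mathcal{M}_{g,[n]}$ denote the moduli stack over $\mathbb{Z}$ of smooth proper geometrically connected genus-$g$ curves equipped with $n$ distinct \emph{unordered} marked points, i.e.\ the quotient $[\mathcal{M}_{g,n}/S_{n}]$ of the moduli stack of $n$-pointed curves by the symmetric group; this is a smooth separated Deligne--Mumford stack of finite type over $\mathbb{Z}$. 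By Definition \ref{hyp.curve}, giving a hyperbolic curve of genus $g$ and degree $n$ over a scheme $T$ amounts to giving a morphism $T\to\mathcal{M}_{g,[n]}$; in particular $(\overline{X_{U}},D_{U})\to U$ corresponds to a morphism $\phi_{U}\colon U\to\mathcal{M}_{g,[n]}$, and the proposition is equivalent to the assertion that $\phi_{U}$ extends to a morphism $\phi\colon S\to\mathcal{M}_{g,[n]}$, uniquely up to unique $2$-isomorphism.

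\emph{Uniqueness.} Two extensions yield morphisms $\phi,\phi'\colon S\to\mathcal{M}_{g,[n]}$ together with a $2$-isomorphism over $U$, which is schematically dense in $S$ because $S$ is reduced. Since $\mathcal{M}_{g,[n]}$ is a separated Deligne--Mumford stack, the functor $\underline{\mathrm{Isom}}(\phi,\phi')$ is representable by a scheme finite and unramified over $S$, and the given $2$-isomorphism is a section of it over $U$. The schematic closure of the image of this section is integral, finite and birational over the normal scheme $S$, hence is an isomorphism onto $S$ (a finite birational morphism from an integral scheme to a normal one is an isomorphism). This produces an isomorphism $(\overline{X},D)\xrightarrow{\sim}(\overline{X}',D')$ over $S$ restricting to the given one over $U$, and it is the unique such, since two would agree on the dense $U$ while $\underline{\mathrm{Isom}}(\phi,\phi')$ is separated.

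\emph{Existence of the extension.} For $n=0$ and $g\geq 2$ this is precisely \cite{Mor}. In general I would proceed as follows. By the Zariski--Nagata purity theorem, the connected finite étale cover $U'\to U$ over which $D_{U}$ becomes a disjoint union of $n$ sections extends to a connected finite étale cover $S'\to S$; then $S'$ is again regular, $S'\setminus U'$ has codimension $\geq 2$, and --- the extension being canonical --- it suffices to extend the resulting $n$-pointed curve over $S'$ and then descend along the finite étale (and, on the level of the curve, projective) morphism $S'\to S$. Over $U'$ the underlying proper curve extends to $S'$ by \cite{Mor} when $g\geq 2$, by purity for abelian schemes when $g=1$ (using one of the sections to rigidify it into an elliptic curve), and is $\mathbb{P}^{1}_{S'}$ with three sections normalized to $0,1,\infty$ when $g=0$ (where necessarily $n\geq 3$). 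Each remaining marked section extends across $S'\setminus U'$: when $g\geq 1$, one extends the relative-degree-one line bundle $\mathcal{O}(\sigma_{i})$ to the regular scheme $\overline{X}_{S'}$, obtains a morphism $S'\to\mathrm{Pic}^{1}_{\overline{X}_{S'}/S'}$, and notes that the locus on which it factors through the closed relative Abel--Jacobi immersion $\overline{X}_{S'}\hookrightarrow\mathrm{Pic}^{1}_{\overline{X}_{S'}/S'}$ is closed and contains the dense open $U'$, hence equals $S'$, thereby producing the section; when $g=0$ the remaining sections are given by units on $U'$, and $\mathcal{O}(U')^{\times}=\mathcal{O}(S')^{\times}$ because $S'$ is normal with complement of codimension $\geq 2$. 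The extended sections stay pairwise disjoint, for their coincidence locus would be a Cartier divisor contained in the codimension-$\geq 2$ closed set $S'\setminus U'$, hence empty. Alternatively, and uniformly in $(g,n)$, one checks that the argument of \cite{Mor} applies essentially verbatim with $\mathcal{M}_{g}$ replaced by $\mathcal{M}_{g,[n]}$.

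\emph{The main obstacle.} Everything except the existence of the extension is formal; the real content is the purity input \cite{Mor} and its adaptation to marked points. This is where the hypotheses that $S$ is regular and $S\setminus U$ has codimension $\geq 2$ are used in an essential way: for a general proper morphism the analogous extension fails, as already a $\mathbb{P}^{1}$-bundle over $\mathbb{A}^{2}\setminus\{0\}$ shows.
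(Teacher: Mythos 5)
Your proposal is correct, and its skeleton coincides with the paper's proof: uniqueness from separatedness of the moduli of curves with marked points, reduction by a finite \'etale cover of $U$ (extended over $S$ by Zariski--Nagata purity, a step the paper leaves implicit in the phrase ``by Galois descent'') to the case where $D_{U}$ splits into sections, extension of the underlying proper curve by Moret--Bailly's purity theorem \cite{Mor}, extension of the marked sections, disjointness of the extended sections because their coincidence locus would be a divisor contained in the codimension $\geq 2$ set $S\setminus U$, and the genus-$0$ case handled by trivializing to $\mathbb{P}^{1}$ with three sections at $0,1,\infty$ and using $\Gamma(S,\mathcal{O}_{S})\cong\Gamma(U,\mathcal{O}_{U})$. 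The one substantive divergence is the section-extension step for $g\geq 1$: the paper simply invokes Lemme 1 of \cite{Mor} (sections of a smooth proper family of curves of genus $\geq 1$ extend across a codimension $\geq 2$ locus over a regular base), whereas you rederive this by extending the line bundle $\mathcal{O}(\sigma_{i})$ across the regular total space and then arguing that the resulting section of $\mathrm{Pic}^{1}$ must land in the image of the Abel--Jacobi closed immersion, since that locus is closed and contains the dense open $U'$; similarly you treat $g=1$ via purity for abelian schemes, although \cite{Mor} already covers all $g\geq 1$. Your variant is self-contained modulo representability of $\mathrm{Pic}^{1}$ (harmless, e.g.\ as an algebraic space, or after a further \'etale localization), at the cost of being slightly heavier than the citation; apart from this local substitution and the moduli-stack packaging of the uniqueness argument, the two proofs are interchangeable.
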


\begin{proof}
The uniqueness follows from the separatedness of the moduli stacks of hyperbolic curves (cf.\,\cite{DM} and \cite{Knu}).

If $D_{U}$ is zero, the assertion follows from {\cite[Th\'eor\`eme (ii)]{Mor}}.
Suppose that the \'etale morphism $D_{U} \rightarrow U$ is of degree $r \geq 1$.
By Galois descent, we may assume that the scheme $D_{U}$ is the disjoint union $\underset{1\leq i \leq r}{\coprod} D_{i,U}$ of $r$ copies of $S$.
Let us assume that the extension $\overline{X} \rightarrow S$ and $D_{i} \rightarrow \overline{X}$ of $\overline{X_{U}} \rightarrow U$ and $D_{i,U} \rightarrow \overline{X_{U}}$ exist.
Then $D_{i}$ and $D_{j}$ are disjoint if $i \neq j$.
Otherwise, if we denote the diagonal of $X \times_{S} X$ by $\Delta_{X/S}$, $\Delta_{X/S} \cap (D_{i} \times_{S} D_{j} )$ is codimension $1$ in $D_{i} \times_{S} D_{j} \cong S$, which is a contradiction.
Thus, it suffices to show that the extension $\overline{X} \rightarrow S$ (resp.\, $D_{i} \rightarrow \overline{X}$) of $\overline{X_{U}} \rightarrow U$ (resp.\, $D_{i,U} \rightarrow \overline{X_{U}}$) exists.

If $g \geq 1$, we have an extension $\overline{X} \rightarrow S$ of $\overline{X_{U}} \rightarrow U$ by {\cite[Th\'eor\`eme (ii)]{Mor}}.
From {\cite[Lemme1]{Mor}}, we have an extension $D_{i} \rightarrow \overline{X} \rightarrow S$ of the section $D_{i,U} \rightarrow \overline{X_{U}} \rightarrow U$ for each $i$.

If $g = 0$, there exists an isomorphism between $\overline{X_{U}}$ and $\mathbb{P}^{1}_{U}$ over $U$ such that the sections $D_{1,U}, D_{2,U}$, and $D_{3,U}$ correspond to $0$,$1$, and $\infty$.
Therefore, we consider the extension $\overline{X} = \mathbb{P}^{1}_{S} \rightarrow S$ and we will show that extensions $D_{i} \rightarrow S$ of $D_{i,U} \rightarrow U$ exist for all $i \geq 4$.
To give the divisors $D_{i,U} \rightarrow \mathbb{P}^{1}_{U}$ for $i \geq 4$ is equivalent to give a section $s_{i}$ of $\Gamma(U,O_{U})$ other than $0$, $1$, and to give their extensions $D_{i} \rightarrow \mathbb{P}^{1}_{S}$ is equivalent to give extensions of $s_{i}$ to elements of $\Gamma(S,O_{S})$.
Since $S \setminus U$ is of codimension $\geq 2$ in the normal scheme $S$, we have $\Gamma(S,O_{S}) \cong \Gamma(U,O_{U})$, and hence the morphisms $D_{i,U} \rightarrow \mathbb{P}^{1}_{U}$ extend to $D_{i} \rightarrow \mathbb{P}^{1}_{S}$.
\end{proof}

\begin{rem}
It is mentioned in {\cite[Remarks 2.11 (d)]{Stix}} that Lemma \ref{mor} is already shown in \cite{Mor}, but it seems not to be shown.
As mentioned there, Lemma \ref{mor} is equivalent to {\cite[Theorem 1.2]{Stix}} by Zariski-Nagata purity theorem, under the hypothesis of Lemma \ref{mor}.
Here, we gave an elementary and direct proof.
\label{Stix}
\end{rem}

\section{Appendix 2: An example of the fundamental group of a hyperbolic polycurve}

In this section, we give an example of a hyperbolic polycurve with bad reduction such that the pro-$l$ outer Galois representation of the inertia subgroup is trivial for all but one prime $l$.

\begin{lem}
Let $1 \rightarrow N \rightarrow G \rightarrow H \rightarrow 1$ be an exact sequence of profinite groups.
\begin{enumerate}
\item We have an exact sequence
$$(N/[N,\mathrm{Ker}(G \rightarrow G^{l})])^{l} \rightarrow G^{l} \rightarrow H^{l} \rightarrow 1.$$
Here, $[-,-]$ denotes the closure of the commutator subgroup.
\item
Suppose that we have a section $s$ of the homomorphism $G \rightarrow H$.
Write $N_{\mathrm{Ker}(H \rightarrow H^{l})}$ for the maximal quotient group of $N$ on which $\mathrm{Ker}(H \rightarrow H^{l})$ acts trivially.
Then we have an exact sequence
$$(N_{\mathrm{Ker}(H \rightarrow H^{l})})^{l} \rightarrow G^{l} \rightarrow H^{l} \rightarrow 1.$$
\end{enumerate}
\label{pro-l-exact}
\end{lem}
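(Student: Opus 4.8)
The plan is to deduce both assertions from two soft inputs: the right exactness of the pro-$l$ completion functor, and the identification of $\mathrm{Ker}(G^{l}\to H^{l})$ with the closure of the image of $N$ in $G^{l}$. First I would record the general fact that, since $G\to H$ is surjective, the induced homomorphism $G^{l}\to H^{l}$ is surjective and $\mathrm{Ker}(G^{l}\to H^{l})=\overline{N}$, where $\overline{N}$ denotes the closure of the image of the composite $N\to G\to G^{l}$. The inclusion $\overline{N}\subseteq\mathrm{Ker}(G^{l}\to H^{l})$ is clear, since $N$ maps to $1$ in $H$, hence in $H^{l}$; for the reverse inclusion, note that $G^{l}/\overline{N}$ is a pro-$l$ group and a quotient of $G/N=H$, so it receives a homomorphism from $H^{l}$ compatible with $G^{l}\to H^{l}$, which forces $\mathrm{Ker}(G^{l}\to H^{l})\subseteq\overline{N}$. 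Consequently, to prove either assertion it suffices to check that $N\to G^{l}$ factors through the quotient of $N$ appearing in the statement: such a factorization automatically extends to the pro-$l$ completion of that quotient (because $\overline{N}\subseteq G^{l}$ is pro-$l$), and the resulting homomorphism has image containing the dense subset $\mathrm{Im}(N\to G^{l})$ of $\overline{N}$ while being compact, hence closed, hence equal to $\overline{N}=\mathrm{Ker}(G^{l}\to H^{l})$; together with surjectivity of $G^{l}\to H^{l}$ this is exactly the asserted exactness.

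For assertion 1, I would put $M=\mathrm{Ker}(G\to G^{l})$, a closed normal subgroup of $G$, so that $[N,M]$ (closure included, per the stated convention) is a closed normal subgroup of $N$ — indeed even of $G$. For $n\in N$ and $m\in M$, the element $[n,m]\in N$ maps to $[\overline{n},1]=1$ in $G^{l}$ because $m$ maps to $1$; hence $[N,M]\subseteq\mathrm{Ker}(N\to G^{l})$, so $N\to G^{l}$ factors through $N/[N,M]$, and the reduction above yields exactness of $(N/[N,M])^{l}\to G^{l}\to H^{l}\to 1$.

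For assertion 2, I would set $L=\mathrm{Ker}(H\to H^{l})$ and use the section $s$ to let $L$ act on $N$ by $h\cdot n=s(h)ns(h)^{-1}$. By definition $N_{L}$ is the maximal quotient of $N$ on which $L$ acts trivially; since $n^{-1}(h\cdot n)=[n^{-1},s(h)]$, and $[N,s(L)]$ is normal in $N$ (checked via $[ab,c]=a[b,c]a^{-1}[a,c]$), this maximal quotient is $N/[N,s(L)]$. Now the composite $H\xrightarrow{s}G\to G^{l}$ takes values in the pro-$l$ group $G^{l}$, hence factors through $H^{l}$, so $s(L)\subseteq\mathrm{Ker}(G\to G^{l})$. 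Exactly as in assertion 1, for $n\in N$ and $h\in L$ the commutator $[n,s(h)]\in N$ maps to $1$ in $G^{l}$, so $N\to G^{l}$ factors through $N/[N,s(L)]=N_{L}$; the reduction then gives exactness of $(N_{L})^{l}\to G^{l}\to H^{l}\to 1$.

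I do not expect a genuine obstacle: once the two preliminary facts are in place the argument is essentially formal. The only steps requiring a little care are the identification $\mathrm{Ker}(G^{l}\to H^{l})=\overline{N}$ (standard for pro-$l$ completions, but not purely diagrammatic) and the verification that $N/[N,s(L)]$ is really the maximal quotient of $N$ on which $L$ acts trivially, i.e. that $[N,s(L)]$ is normal in $N$ and that its elements generate the same closed normal subgroup as the elements $n^{-1}(h\cdot n)$.
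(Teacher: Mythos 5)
Your proof is correct and follows essentially the same route as the paper's: both rest on the fact that $\mathrm{Ker}(G^{l}\to H^{l})$ is the closure of the image of $N$, the observation that commutators of $N$ with $\mathrm{Ker}(G\to G^{l})$ die in $G^{l}$, and, for assertion 2, the inclusion $s(\mathrm{Ker}(H\to H^{l}))\subset\mathrm{Ker}(G\to G^{l})$. The only difference is that you spell out the details (identification of the kernel, factorization through the pro-$l$ completion via compactness and density) that the paper's three-sentence proof leaves implicit.
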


\begin{proof}
Since the image of $[N,\mathrm{Ker}(G \rightarrow G^{l})]$ in $G^{l}$ is trivial, we have an exact sequence
$$N/[N,\mathrm{Ker}(G \rightarrow G^{l})] \rightarrow G^{l} \rightarrow H^{l} \rightarrow 1,$$
and hence also the desired exact sequence
$$(N/[N,\mathrm{Ker}(G \rightarrow G^{l})])^{l} \rightarrow G^{l} \rightarrow H^{l} \rightarrow 1.$$
Thus, assertion 1 holds.
Since we have $s(\mathrm{Ker}(H \rightarrow H^{l})) \subset \mathrm{Ker}(G \rightarrow G^{l})$, assertion 2 follows from assertion 1.
\end{proof}

\begin{exam}
Let $K, O_{K}, p, K^{\rm sep}, G_{K},$ and $I_{K}$ be as in Section \ref{intro}.
Suppose that $O_{K}$ is strictly henselian and $p=0$.
Note that the Galois group $G_{K} = I_{K}$ is isomorphic to the profinite completion $\ZZ$ of $\Z$.
Let $X_{1}$ and $X_{2}$ be proper hyperbolic curves over $K$ which have good reduction.
We give an example of hyperbolic polycurve $Z$ of bad reduction over a $K$ whose pro-$l$ outer Galois representation is trivial for all but one prime $l$.
This example shows that, unlike the case of hyperbolic curves, to look at the pro-$l$ outer Galois representation for single prime number $l$ is not enough to determine whether the hyperbolic polycurve has good reduction or not.

Fix a prime number $l_{1}$.
Let $X_{1}$ and $X_{2}$ be proper hyperbolic curves over $K$ which have good reduction.
Suppose that there exist an automorphism $\iota_{2}$ of $X_{2}$ over $K$ of order $l_{1}$ and a rational point $x_{2}$ of $X_{2}$ fixed by $\iota_{2}$.
Take a geometric point $\ast_{1}$ of $X\times_{\Spec K}\Spec K^{\sep}$ and write $\Pi'_{X_{1}}$ (resp.\,$\Delta'_{X_{1}}$) for $\pi_{1}(X_{1},\ast_{1})$ (resp.\,$\Delta'_{X_{1}}$).
By Lemma \ref{lem1}, we have a canonical decomposition $\Pi'_{X_{1}}\cong \Delta'_{X_{1}}\times G_{K}$.
Take a surjective group homomorphisms $\Pi'_{X_{1}}\to \Z/l_{1}\Z$ and $G_{K} \to \Z/l_{1}\Z$.
Let $(\Pi'_{X_{1}} \cong) \Delta'_{X_{1}} \times G_{K} \to \Z/l_{1}\Z$ be the sum of these homomorphisms.
Write $X_{1}'$ for the \'etale covering space of $X_{1}$ corresponding to $\mathrm{Ker}\,(\Pi'_{X_{1}} \to \Z/l_{1}\Z)$ and $\iota_{1}$ for a generator of $\mathrm{Aut}(X_{1}'/X_{1})$.
Consider the action of $\Z/l_{1}\Z$ on $X_{2} \times_{\mathrm{Spec}\,K}X_{1}'$ induced by $(\iota_{2},\iota_{1})$, and write $Z$ for the quotient scheme of $X_{2} \times_{\mathrm{Spec}\,K}X_{1}'$ by this $\Z/l_{1}\Z$-action.
By construction, we have a Cartesian diagram
\[
\xymatrix{ 
X_{2} \times_{\mathrm{Spec}\,K}X_{1}' \ar[r] \ar[d]
& X_{1}' \ar[d]\\
Z \ar[r] 
& X_{1}.
}
\]
Let $L''$ be an algebraic closure of the function field of $X'_{1}$ and $\ast$ a geometric point of $\{x_{2}\}\times_{\Spec K} \Spec L''$.
Write $\Pi_{Z}$ (resp.\,$\Pi_{X_{1}}$; $\Delta_{X_{1}}$; $\Delta_{X_{2}}$) for the \'etale fundamental group $\pi_{1}(Z,\ast)$ (resp.\,$\pi_{1}(X_{1},\ast)$; $\pi_{1}(X_{1}\times_{\Spec K} \Spec L'',\ast)$; $\pi_{1}(X_{2}\times_{\Spec K} \Spec L'',\ast)$).
By {\cite[PROPOSITION 2.5]{Ho}}, the sequence
\begin{equation}
1 \to \Delta_{X_{2}} \to \Pi_{Z} \to \Pi_{X_{1}} \to 1
\label{newexa}
\end{equation}
is exact and $\Delta_{X_{2}}$ is isomorphic to $\pi_{1}(X_{2}\times_{\Spec K}\Spec K^{\sep},\ast)$.
Moreover, the sequence
\begin{equation}
1 \to \Delta_{X_{2}} \to \Delta_{Z} \to \Delta_{X_{1}}  \to 1.
\label{wari}
\end{equation}
is also exact.
Since the section of $X_{2} \times_{\mathrm{Spec}\,K}X_{1}' \to X_{1}'$ determined by the point $x_{2}$ is compatible with the actions of $\Z/l_{1}\Z$, we obtain a section of $Z \to X_{1}$ by taking the quotient schemes by $\Z/l_{1}\Z$.
This section defines a canonical section of $\Pi_{Z} \to \Pi_{X_{1}}$ (resp.\,$\Delta_{Z} \to \Delta_{X_{1}}$) in (\ref{newexa}) (resp.\,(\ref{wari})).
We calculate the action
\begin{equation}
\Pi_{X_{1}} \rightarrow \mathrm{Aut}(\Delta_{X_{2}})
\label{goal}
\end{equation}
induced by the section.
Write $\mathrm{Aut}_{K}(X_{2},x_{2})$ for the subgroup of the group of automorphisms of $X_{2}$ over $K$ consisting of automorphisms fixing $x_{2}$.
Write $\psi$ for the composite homomorphism of the natural surjection $\Pi_{X_{1}}\to  \Pi_{X_{1}}/\Pi_{X_{1}'}(= \langle \iota_{1} \rangle)$, two isomorphisms $\langle \iota_{1} \rangle \cong \Z/l_{1}\Z \cong \langle \iota_{2} \rangle$, the inclusion $\langle \iota_{2} \rangle \hookrightarrow \mathrm{Aut}_{K}(X_{2},x_{2})$, and the action $\mathrm{Aut}_{K}(X_{2},x_{2})\to \Delta_{X_{2}}$ defined by the fixed point $x_{2}$.
Write $\phi$ for the homomorphism $\Pi_{X_{1}} \to\mathrm{Aut}(\Delta_{X_{2}})$ defined by the splitting of the exact sequence
$$1 \to\Delta_{X_{2}} \rightarrow \pi_{1}(X_{2}\times_{\Spec K}X_{1},\ast) \to \Pi_{X_{1}} \to 1$$
(, exactness of which follows from {\cite[PROPOSITION 2.5]{Ho}},) determined by $x_{2}$.
Then $\phi$ coincides with the composite homomorphism $\Pi_{X_{1}}\to G_{K}\to \mathrm{Aut}(\Delta_{X_{2}})$, where the second homomorphism is induced by $x_{2}$.
Since $X_{2}$ has good reduction, $\phi$ is trivial.
By the construction of $Z$, the action (\ref{goal}) coincides with $\phi + \psi (= \psi)$.
We show that, for any prime number $l_{2}$, the pro-$l_{2}$ outer Galois representation $I_{K} \to \mathrm{Out}(\Delta_{Z}^{l_{2}})$ is trivial if and only if $l_{1} \neq l_{2}$.

First, we assume that $l_{1} \neq l_{2}$.
By taking pro-$l_{2}$ completion of the exact sequence (\ref{wari}), we obtain a commutative diagram with exact horizontal lines 
\[
\xymatrix{
&\Delta_{X_{2}}^{l_{2}}\ar[r] \ar[d]
&\Delta_{Z}^{l_{2}} \ar[r] \ar@{=}[d]
&\Delta_{X_{1}}^{l_{2}} \ar@{=}[d] \ar[r]
&1\\
1 \ar[r]
& \Delta \ar[r] 
& \Delta_{Z}^{l_{2}} \ar[r] 
&\Delta_{X_{1}}^{l_{2}}  \ar[r]
& 1.
}
\]
Here, we write $\Delta$ for the image of $\Delta_{X_{2}}^{l_{2}} \rightarrow  \Delta_{Z}^{l_{2}}$.
Since the composite homomorphism $\mathrm{Ker}(\Delta_{X_{1}}\rightarrow\Delta_{X_{1}}^{l_{2}}) \hookrightarrow \Delta_{X_{1}} \hookrightarrow \Pi_{X_{1}} \to \langle \iota_{2} \rangle$ is surjective by the assumption $l_{1} \neq l_{2}$, $\Delta$ is a quotient group of $(\Delta_{X_{2}})_{\langle \iota_{2} \rangle}$ by Lemma \ref{pro-l-exact}.
Write $\Pi_{Z}^{(l_{2})}$ (resp.\,$\Pi_{X_{1}}^{(l_{2})}$) for the quotient group $\Pi_{Z}/\mathrm{Ker} (\Delta_{Z} \rightarrow \Delta_{Z}^{l_{2}})$ (resp.\,$\Pi_{X_{1}}/\mathrm{Ker}(\Delta_{X_{1}} \rightarrow \Delta_{X_{1}}^{l_{2}})$).
We have a commutative diagram with exact horizontal lines
\begin{equation}
\xymatrix{
1 \ar[r]
&\Delta \ar[r] \ar@{=}[d]
&\Delta_{Z}^{l_{2}} \ar[r] \ar[d]
&\Delta_{X_{1}}^{l_{2}} \ar[d] \ar[r]
&1\\
1 \ar[r]
& \Delta \ar[r] 
& \Pi_{Z}^{(l_{2})} \ar[r] 
&\Pi_{X_{1}}^{(l_{2})}  \ar[r]
& 1.
}
\label{actiondefined}
\end{equation}
Both horizontal lines of (\ref{actiondefined}) have a section determined by the point $x_{2}$, which induces an action of $\Pi_{X_{1}}^{(l_{2})}$ on $\mathrm{Aut}(\Delta)$ compatible with (\ref{goal}).
Since $\Delta$ is a quotient group of $(\Delta_{X_{2}})_{\langle \iota_{2} \rangle}$, this action is trivial.
Hence, we have a canonical decomposition $\Pi_{Z}^{(l_{2})} \cong \Delta \times \Pi_{X_{1}}^{(l_{2})}$.
Moreover, since $X_{1}$ has good reduction, we have a canonical decomposition $\Pi_{X_{1}}^{(l_{2})} \cong \Delta_{X_{1}}^{l_{2}} \times G_{K}$ by {\cite[Proposition 1.11]{Tama1}} and the proof of Lemma \ref{lem1}.
Therefore, we can show that the outer Galois action $I_{K} \to \mathrm{Out}(\Delta_{Z}^{l_{2}})$ is trivial by using these decompositions.

Next, we assume that $l_{1} = l_{2} (= l )$ and show that the pro-$l$ outer Galois representation
$$I_{K} \rightarrow \mathrm{Out}(\Delta_{Z}^{l})$$
is nontrivial.
Since the outer action $\Delta_{X_{1}}\to \mathrm{Out}(\Delta_{X_{2}}^{l})$ factors through $\Delta_{X_{1}}\to \Delta_{X_{1}}^{l}$, we have an exact sequence $1 \to \Delta_{X_{2}}^{l} \to \Delta_{Z}^{l} \to \Delta_{X_{1}}^{l} \to 1$ by Lemma \ref{Ander}.4.
We have a commutative diagram with exact horizontal lines
\begin{equation}
\xymatrix{
1 \ar[r]
&\Delta_{X_{2}}^{l} \ar[r] \ar@{^{(}-_>}[d]
&\Pi_{Z}^{(l)} \ar[r] \ar[d]
&\Pi_{X_{1}} ^{(l)} \ar[d] \ar[r]
&1\\
1 \ar[r]
& \Delta_{Z}^{l} \ar[r] 
&\Pi_{Z}^{(l)} \ar[r] 
&G_{K} \ar[r]
& 1.
}
\label{diaglast}
\end{equation}
Since $X_{1}$ has good reduction, we have a decomposition $\Pi_{X_{1}}^{(l)} = \Delta_{X_{1}}^{l} \times Z_{\Pi_{X_{1}}^{(l)}} (\Delta_{X_{1}}^{l})$ and $Z_{\Pi_{X_{1}}^{(l)}} (\Delta_{X_{1}}^{l})$ is isomorphic to $G_{K}$ by {\cite[Proposition 1.11]{Tama1}} and the proof of Lemma \ref{lem1}.
If the outer action $I_{K} \to \mathrm{Out}\,\Delta_{Z}^{l}$ is trivial, we would have a decomposition $\Pi_{Z}^{(l)} = \Delta_{Z}^{l} \times Z_{\Pi_{Z}^{(l)}} (\Delta_{Z}^{l})$ by the proof of Lemma \ref{lem1} and the center-freeness of $\Delta_{Z}^{l}$, and the homomorphism $\Pi_{Z}^{(l)} \rightarrow \Pi_{X_{1}}^{(l)}$ would induce a surjection $Z_{\Pi_{Z}^{(l)}} (\Delta_{Z}^{l}) \rightarrow Z_{\Pi_{X_{1}}^{(l)}} (\Delta_{X_{1}}^{l})$.
Therefore, it suffices to show that the outer action $Z_{\Pi_{X_{1}}^{(l)}} (\Delta_{X_{1}}^{l})\to \mathrm{Out}\,(\Delta_{X_{2}}^{l})$ associated to the first horizontal line of the diagram (\ref{diaglast}) is nontrivial. 
We have a diagram with exact horizontal lines
\begin{equation}
\xymatrix{
1 \ar[r]
& \Delta_{X_{2}} \ar[r] \ar@{=}[d]
&\Pi_{Z} \times_{\Pi_{X_{1}}}Z_{\Pi_{X_{1}}} (\Delta_{X_{1}}) \ar[r] \ar[d]
&Z_{\Pi_{X_{1}}} (\Delta_{X_{1}}) \ar[r] \ar[d]
& 1\\
1 \ar[r]
&\Delta_{X_{2}} \ar[r]
&\Pi_{Z} \ar[r]
&\Pi_{X_{1}} \ar[r]
&1.
}
\label{diaglast2}
\end{equation}
Write $\mathfrak{X}_{1}$ for the smooth model of $X_{1}$ and $\xi_{1}$ for the generic point of the special fiber of $\mathfrak{X}_{1}$.
Let $L$ be the field of fractions of a strict henselization of the local ring of $\mathfrak{X}_{1}$ at $\xi_{1}$ in $L''$.
By Lemma \ref{lem2}, the natural homomorphism $(G_{L}:=)\mathrm{Gal}(L''/L)\to \Pi_{X_{1}}$
induces $G_{L}\to Z_{\Pi_{X_{1}}}(\Delta_{X_{1}})(\cong G_{K})$.
Since a uniformizer of $K$ is a uniformizer of $L$, $G_{L}\to Z_{\Pi_{X_{1}}}(\Delta_{X_{1}})$ is an isomorphism.
Let $L'$ be the finite extension field of $L$ defined by $\mathrm{Ker}(G_{L} \to \Pi_{X_{1}} \rightarrow \Z/l\Z)$ and $\iota$ a generator of $\mathrm{Gal}(L'/L)$.
Consider the action of $\Z/l\Z$ on $X_{2}\times_{\Spec K}\mathrm{Spec}\,L'$ induced by $(\iota_{2},\iota)$ and write $X_{2}'$ for the quotient scheme.
Write $\Pi_{X'_{2}}$ for the \'etale fundamental group $\pi_{1}(X_{2}, \ast)$.
Then we have an exact sequence
$$1 \to \Delta_{X_{2}} \to \Pi_{X'_{2}} \to G_{L}(\cong Z_{\Pi_{X_{1}}}(\Delta_{X_{1}})) \to 1.$$
Since we have a diagram
\[
\xymatrix{ 
X_{2} \times_{\mathrm{Spec}\,K}\mathrm{Spec}\,L' \ar[r] \ar[d]
& \mathrm{Spec}\,L' \ar[d]\\
X_{2} \times_{\mathrm{Spec}\,K}X_{1}' \ar[r]
& X_{1}'
}
\]
compatible with the actions of $\Z/l\Z$, we obtain a commutative diagram with exact horizontal lines
\begin{equation}
\xymatrix{
1 \ar[r]
&\Delta_{X_{2}} \ar[r] \ar@{=}[d]
&\Pi_{X_{2}'} \ar[r] \ar[d]
&G_{L} \ar[r] \ar[d]
&1\\
1 \ar[r]
& \Delta_{X_{2}} \ar[r]
&\Pi_{Z} \times_{\Pi_{X_{1}}}Z_{\Pi_{X_{1}}} (\Delta_{X_{1}}) \ar[r]
&Z_{\Pi_{X_{1}}} (\Delta_{X_{1}}) \ar[r]
& 1.
}
\label{forLout}
\end{equation}
Therefore, it suffices to show that the outer action $G_{L} \to \mathrm{Out}(\Delta_{X_{2}}^{l})$ defined by the first line of (\ref{forLout}) is nontrivial.
As the calculation of the action (\ref{goal}), we can calculate the action of $G_{L}$ on $\Delta_{X_{2}}^{l}$ determined by the rational point of $X'_{2}$ defined by $x_{2}$.
By {\cite[Theorem 1.13]{DM}}, the action of $G_{L}$ on $\Delta_{X_{2}}^{\mathrm{ab},l}$ is nontrivial, and hence the outer action $G_{L} \to \mathrm{Out}(\Delta_{X_{2}}^{l})$ is also nontrivial.
\label{ex1}
\end{exam}
\label{example}

\section{Appendix 3: Specialization isomorphisms of pro-$p'$ \'etale fundamental groups}
The contents of this section is the same as {\cite[Section 7]{Nag2}}.
For convenience of readers, the author decided to put them on this paper also.
In this section, we prove that a sort of specialization homomorphisms of pro-$p'$ \'etale fundamental groups is an isomorphism.
This fact (cf.\,Theorem \ref{isom2}) seems to be known to experts, but the author cannot find it in the literature.

Let $K$ be a discrete valuation field,  $O_{K}$ the valuation ring of $K$, $p$ ($\geq 0$) the residual characteristic of $O_{K}$, $k$ the residual field of $O_{K}$, $O_{K}^{\mathrm{h}}$ a henselization of $O_{K}$, $O_{K}^{\mathrm{sh}}$ a strict henselization of $O_{K}^{\mathrm{h}}$, $K^{\mathrm{h}}$ the field of fractions of $O_{K}^{\mathrm{h}}$, $K^{\mathrm{sh}}$ the field of fractions of $O_{K}^{\mathrm{sh}}$, and $K^{\mathrm{sep}}$ a separable closure of $K^{\mathrm{sh}}$.
For any scheme $Z$ of finite type and geometrically connected over $K$ and any geometric point $\ast$ of the scheme $Z\times_{\Spec K}\Spec K^{\sep}$, we write $\pi_{1}(Z,\ast)^{(p')}$ for the group
$$\pi_{1}(Z,\ast)/\mathrm{Ker}(\pi_{1}(Z \times_{\mathrm{Spec}\,K} \mathrm{Spec}\,K^{\mathrm{sep}}, \ast) \rightarrow \pi_{1}(Z \times_{\mathrm{Spec}\,K} \mathrm{Spec}\,K^{\mathrm{sep}}, \ast)^{p'}).$$

\begin{lem}
Let $K \subset K_{1} \subset K_{2}$ be finite extensions of fields.
Suppose that $K_{2}$ and $K_{1}$ are Galois over $K$.
Write $O_{K_{1}}$ (resp.\,$O_{K_{2}}$) for the normalization of $O_{K}$ in $K_{1}$ (resp.\,$K_{2}$).
Suppose that $O_{K_{1}}$ is a discrete valuation ring totally ramified over $O_{K}$ and that $O_{K_{2}}$ is a discrete valuation ring unramified over $O_{K_{1}}$.
Then the maximal unramified extension $K_{3}$ of $K$ in $K_{2}$ is Galois over $K$ and $K_{2} \cong K_{1}\otimes_{K} K_{3}$.
\label{DVR}
\end{lem}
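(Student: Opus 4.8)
The plan is to work entirely with the Galois group $G=\mathrm{Gal}(K_{2}/K)$ and the subgroup $H=\mathrm{Gal}(K_{2}/K_{1})$, exploiting that $O_{K_{2}}$, being a discrete valuation ring, has a unique prime over $O_{K}$, so that $G$ is the decomposition group and ordinary ramification theory applies. Write $k\subseteq k_{1}\subseteq k_{2}$ for the residue fields. Total ramification of $O_{K_{1}}/O_{K}$ gives $k_{1}=k$ and $e(K_{1}/K)=[K_{1}:K]$, hence no defect; unramifiedness of $O_{K_{2}}/O_{K_{1}}$ gives $e(K_{2}/K_{1})=1$ and $k_{2}/k$ finite separable with $[k_{2}:k]=[K_{2}:K_{1}]$, so $[K_{2}:K]=e(K_{2}/K)f(K_{2}/K)$ and likewise for every subextension. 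The $G$-action on $k_{2}$ makes $k_{2}/k$ normal with $G\twoheadrightarrow\mathrm{Gal}(k_{2}/k)$; let $I\trianglelefteq G$ be its kernel, so $[G:I]=[k_{2}:k]=[K_{2}:K_{1}]$ and $\lvert I\rvert=e(K_{2}/K)=[K_{1}:K]$.

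The heart of the proof is the assertion $G=I\rtimes H$. Here $I\cap H$ is exactly the inertia subgroup of the Galois extension $K_{2}/K_{1}$, which is trivial because $K_{2}/K_{1}$ is unramified; and $\lvert H\rvert=[K_{2}:K_{1}]=[G:I]$, so $\lvert IH\rvert=\lvert I\rvert\,\lvert H\rvert=\lvert G\rvert$; since $I$ is normal, this is a semidirect-product decomposition. Everything else is now formal. By the Galois correspondence $K_{2}^{I}$ is Galois over $K$ (as $I\trianglelefteq G$), $K_{1}=K_{2}^{H}$, $K_{1}\cap K_{2}^{I}=K_{2}^{\langle H,I\rangle}=K_{2}^{G}=K$, and $K_{1}\cdot K_{2}^{I}=K_{2}^{H\cap I}=K_{2}$. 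As $K_{2}^{I}/K$ is Galois and meets $K_{1}$ in $K$, the fields $K_{1}$ and $K_{2}^{I}$ are linearly disjoint over $K$, so the multiplication map $K_{1}\otimes_{K}K_{2}^{I}\to K_{2}$ is an isomorphism.

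It remains to identify $K_{2}^{I}$ with the maximal unramified subextension $K_{3}$. On the one hand, every unramified $K\subseteq L\subseteq K_{2}$ lies in $K_{2}^{I}$: writing $O_{L}=O_{K}[\beta]$ with $\overline{\beta}$ a generator of $k_{L}/k$ whose minimal polynomial over $K$ has separable reduction, any $\sigma\in I$ must carry $\beta$ to the unique root of $\min_{K}(\beta)$ reducing to $\sigma(\overline{\beta})=\overline{\beta}$, i.e. to $\beta$; hence $L\subseteq K_{2}^{I}$ and so $K_{3}\subseteq K_{2}^{I}$. On the other hand $K_{2}^{I}/K$ is unramified: the residue extension $k_{2}/k_{K_{2}^{I}}$ is Galois (separable, being inside $k_{2}/k$, and normal by the general theory) with group $\mathrm{Gal}(K_{2}/K_{2}^{I})$ modulo the inertia of $K_{2}/K_{2}^{I}$, and that inertia is all of $\mathrm{Gal}(K_{2}/K_{2}^{I})=I$ since $I$ acts trivially on $k_{2}$; thus $k_{K_{2}^{I}}=k_{2}$, whence $[K_{2}^{I}:K]=[G:I]=[k_{2}:k]=[k_{K_{2}^{I}}:k]$ and, by absence of defect, $e(K_{2}^{I}/K)=1$. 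Therefore $K_{2}^{I}\subseteq K_{3}$, so $K_{2}^{I}=K_{3}$, which proves the lemma.

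The step I expect to be the main obstacle is the complement statement $G=I\rtimes H$: one must extract from the unramifiedness of $K_{2}/K_{1}$ both that $H$ meets the inertia trivially and that $\lvert H\rvert=[G:I]$, and then carry out the bookkeeping that recognizes $K_{2}^{I}$ as $K_{3}$. The ramification-theoretic inputs used — normality of $k_{2}/k$, surjectivity of $G$ onto $\mathrm{Gal}(k_{2}/k)$ with kernel the inertia, and the no-defect equality — are standard once $O_{K_{2}}$ is known to be a discrete valuation ring; if one wishes to be cautious about the non-Henselian situation, one may first replace $O_{K}$ by its Henselization, which leaves all the hypotheses intact, and invoke them there.
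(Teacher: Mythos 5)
Your proof is correct. The paper itself disposes of the lemma in two sentences: since $O_{K_2}$ is a discrete valuation ring one may reduce to the case where $K$ is complete (the point being that $K_1\otimes_K\widehat{K}$ and $K_2\otimes_K\widehat{K}$ remain fields because each $O_{K_i}$ has a single maximal ideal), and in the complete case the assertion is declared easy, i.e.\ left to the standard structure theory of complete discretely valued fields. You instead argue directly over the possibly non-complete $O_K$: because $O_{K_2}$ has a unique prime, $G=\mathrm{Gal}(K_2/K)$ is the full decomposition group, so the Dedekind-domain form of decomposition/inertia theory (normality of $k_2/k$, surjectivity of $G\to\mathrm{Aut}(k_2/k)$ with kernel the inertia $I$, and $ef=n$ for separable extensions with a unique prime) applies; this yields $G=I\rtimes H$, and you then identify $K_2^{I}$ with $K_3$ by showing that $K_2^{I}/K$ is unramified and that every unramified subextension is fixed by $I$, via a lift $\beta$ of a primitive element of the residue extension whose minimal polynomial has separable reduction, so that $\beta$ is the unique root with its reduction. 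The delicate points are handled: $k_2/k=k_2/k_1$ is separable precisely because $O_{K_2}/O_{K_1}$ is unramified, which is what upgrades the trivial-automorphism conclusion to $k_{K_2^{I}}=k_2$, and $L=K(\beta)$ (indeed $O_L=O_K[\beta]$ by Nakayama, $O_L$ being finite over $O_K$) justifies the descent of the fixed-point argument to $L$. Compared with the paper, your route is longer but self-contained and avoids completion entirely, making explicit exactly which ramification-theoretic inputs are used; the paper's reduction is shorter but leaves both the base-change bookkeeping and the complete-case verification to the reader. Your closing suggestion of passing to the Henselization is not needed for your own argument, since none of the facts you invoke require Henselian hypotheses.
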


\begin{proof}
Since $O_{K_{2}}$ is a discrete valuation ring, we may assume that $K$ is complete.
In this case, one can verify the assertion easily.
\end{proof}

\begin{lem}
\begin{enumerate}
\item
Let $\overline{\mathfrak{X}} \rightarrow \Spec O_{K}$ be a proper smooth morphism with geometrically connected fibers.
Let $\mathfrak{D} \subset \overline{\mathfrak{X}}$ be a normal crossing divisor of the scheme $\overline{\mathfrak{X}}$ relative to $\Spec O_{K}$.
Write $\mathfrak{X}$ (resp.\,$\overline{X}$; $X$; $\mathfrak{X}_{k}$) for the scheme $\overline{\mathfrak{X}} \setminus \mathfrak{D}$ (resp.\,$\overline{\mathfrak{X}}\times_{\Spec O_{K}}\Spec K$; $\mathfrak{X}\times_{\Spec O_{K}}\Spec K$; $\mathfrak{X}\times_{\Spec O_{K}} \Spec k$).
Since the scheme $\mathfrak{X}_{k}$ is a dense open subscheme of a connected regular scheme, $\mathfrak{X}_{k}$ is irreducible.
We write $\xi$ for the generic point of $\mathfrak{X}_{k}$.
Take a geometric point $\overline{t}$ of $X \times_{\mathrm{Spec}\,K} \mathrm{Spec}\,K^{\mathrm{sep}}$.
Consider a finite Galois \'etale covering $Y \rightarrow X$ corresponding to an open subgroup of $\pi_{1}(X,\overline{t})^{(p')}$.
Suppose that the coefficient field of $Y$ is $K$.
Then the normalization $\Spec O(\mathfrak{Y}, \xi)$ of the spectrum of the local ring $O_{\mathfrak{X},\xi}$ in $Y$ is the spectrum of a discrete valuation ring.
\item
Suppose that $O_{K}=O_{K}^{\mathrm{h}}$.
Write $\mathfrak{X}_{0}$ (resp.\,$X_{0}$) for the spectrum of the ring $O_{K}$ (resp.\,$K$).
Let
$$\mathfrak{X}_{n} \rightarrow \ldots \rightarrow \mathfrak{X}_{0}$$
be morphisms such that there exist a proper smooth morphism $\overline{\mathfrak{X}}_{i+1} \rightarrow \mathfrak{X}_{i}$ with geometrically connected fibers and a normal crossing divisor $\mathfrak{D}_{i+1} \subset \overline{\mathfrak{X}}_{i+1}$ of the scheme $\overline{\mathfrak{X}}_{i+1}$ relative to $\mathfrak{X}_{i}$ satisfying that the complement $\overline{\mathfrak{X}}_{i+1} \setminus \mathfrak{D}$ is isomorphic to $\mathfrak{X}_{i+1}$ for each $0 \leq i \leq n-1$.
Write $\overline{X}_{i}$ (resp.\,$X_{i}$) for the scheme $\overline{\mathfrak{X}}_{i}\times_{\Spec O_{K}}\Spec K$ (resp.\,$\mathfrak{X}_{i}\times_{\Spec O_{K}}\Spec K$).
Since the scheme $\mathfrak{X}_{i,k} = \mathfrak{X}_{i}\times_{\Spec O_{K}} \Spec k$ is a dense open subscheme of a connected regular scheme, $\mathfrak{X}_{k}$ is irreducible for each $0 \leq i \leq n$.
We write $\xi_{i}$ for the generic point of $\mathfrak{X}_{i,k}$ for each $0 \leq i \leq n$.
Consider a finite Galois \'etale covering $Y_{n} \rightarrow X_{n}$ corresponding to an open subgroup of $\pi_{1}(X_{n},\overline{t})^{(p')}$.
Then the normalization $\Spec O(\mathfrak{Y}_{n}, \xi_{n})$ of the spectrum of the local ring $O_{\mathfrak{X}_{n},\xi_{n}}$ in $Y_{n}$ is the spectrum of a discrete valuation ring.
\end{enumerate}
\label{irreducible}
\end{lem}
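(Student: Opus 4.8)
\emph{Assertion 1: the plan.} The idea is to reduce to showing that a certain integral closure is local, and to achieve this by an Abhyankar base change that turns the relevant normalization into a \emph{regular} scheme with regular special fibre, so that Stein factorization (which only gives connectedness) can be upgraded to irreducibility. In detail: since $\overline{\mathfrak X}\to\Spec O_K$ is smooth, $\overline{\mathfrak X}$ is regular and a uniformizer $\pi$ of $O_K$ is part of a regular system of parameters at every point of the special fibre $\overline{\mathfrak X}_s=\overline{\mathfrak X}\times_{\Spec O_K}\Spec k$; as $\overline{\mathfrak X}_s$ is smooth and geometrically connected over $k$ it is integral, $\mathfrak X_s=\overline{\mathfrak X}_s\setminus\mathfrak D_s$ is a nonempty open with generic point $\xi\notin\mathfrak D$, and $O_{\overline{\mathfrak X},\xi}=O_{\mathfrak X,\xi}=:A$ is a discrete valuation ring with uniformizer $\pi$, fraction field $F:=K(X)$, residue field $\kappa(\xi)$ the geometrically integral function field of $\mathfrak X_s$. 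Let $L:=K(Y)$ (a finite Galois extension of $F$), let $B$ be the integral closure of $A$ in $L$ (this is $O(\mathfrak Y,\xi)$, a semilocal Dedekind domain, hence a discrete valuation ring iff it is local), and let $\mathfrak Y$ be the normalization of $\overline{\mathfrak X}$ in $L$, so that $\mathfrak Y\to\overline{\mathfrak X}$ restricts to $Y\to X$ over $X$; the maximal ideals of $B$ correspond to the points of $\mathfrak Y$ over $\xi$, which (since $\mathfrak Y$ is integral and finite surjective over $\overline{\mathfrak X}$) are exactly the generic points of $\mathfrak Y_s$. So it suffices to prove $\mathfrak Y_s$ is irreducible. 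Finally, since $Y$ is geometrically connected over $K$ and $Y\to X$ is Galois, $Y\times_{\Spec K}\Spec K^{\mathrm{sep}}\to X\times_{\Spec K}\Spec K^{\mathrm{sep}}$ is Galois with group a finite quotient of the pro-$p'$ group $\pi_1(X\times_{\Spec K}\Spec K^{\mathrm{sep}},\overline t)^{p'}$, hence of order prime to $p$; as degrees of finite coverings are stable under base change, $d:=[L:F]$ is prime to $p$.

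\emph{Assertion 1: carrying it out.} Because $d$ is prime to $p$, the covering $\mathfrak Y\to\overline{\mathfrak X}$, which is \'etale over $X=\overline{\mathfrak X}\setminus(\mathfrak D\cup\overline{\mathfrak X}_s)$, is tamely ramified along the normal crossing divisor $\mathfrak D\cup\overline{\mathfrak X}_s$ (all ramification indices and residue degrees divide $d$, so residue extensions are separable). Pick $e$ prime to $p$ divisible by all ramification indices of $B/A$, put $O_K':=O_K[\pi^{1/e}]$ (a discrete valuation ring totally ramified over $O_K$), $K':=\mathrm{Frac}\,O_K'$, $\overline{\mathfrak X}':=\overline{\mathfrak X}\times_{\Spec O_K}\Spec O_K'$, $\mathfrak D':=\mathfrak D\times_{\Spec O_K}\Spec O_K'$; then $Y\times_{\Spec K}\Spec K'$ is geometrically connected over $K'$, so $L\otimes_K K'$ is a field, $A':=O_{\overline{\mathfrak X}',\xi}$ is a discrete valuation ring with uniformizer $\pi^{1/e}$ and residue field $\kappa(\xi)$, and $\mathrm{Frac}\,A'=F\otimes_K K'$. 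Let $\mathfrak Y'$ be the normalization of $\overline{\mathfrak X}'$ in $L\otimes_K K'$. By Abhyankar's lemma this covering is \'etale outside $\mathfrak D'$, and \'etale-locally on the regular scheme $\overline{\mathfrak X}'$ it is a disjoint sum of Kummer coverings got by adjoining roots, of degrees prime to $p$, of local equations of the components of $\mathfrak D'$; since these components together with $V(\pi^{1/e})$ form a normal crossing divisor, each such Kummer covering is regular and $\pi^{1/e}$ stays part of a regular system of parameters on it. Hence $\mathfrak Y'$ is regular and $\mathfrak Y'_s=V(\pi^{1/e})$ is regular. On the other hand the generic fibre $\overline Y'$ of $\mathfrak Y'$ is the normalization of $\overline X\times_K K'$ in $L\otimes_K K'$, a geometrically integral proper $K'$-scheme, so $H^0(\overline Y',\mathcal O)=K'$; by flat base change and properness $H^0(\mathfrak Y',\mathcal O)$ is a finite normal $O_K'$-subalgebra of $K'$, hence equals $O_K'$, so Stein factorization gives that $\mathfrak Y'\to\Spec O_K'$ has connected fibres, in particular $\mathfrak Y'_s$ is connected; being also regular, $\mathfrak Y'_s$ is irreducible. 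Thus $\mathfrak Y'$ has one point over $\xi$, i.e. $B':=$ (integral closure of $A'$ in $L\otimes_K K'$) is a discrete valuation ring. Since $B'$ is the normalization of $B\otimes_A A'$, $\Spec(B\otimes_A A')=\coprod_{\mathfrak P\mid\mathfrak m_A}\Spec(B_{\mathfrak P}\otimes_A A')$ is connected, forcing $B$ to be local; so $B$ is a discrete valuation ring.

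\emph{Assertion 2: the plan.} Here the plan is to reduce each case to Assertion 1 applied over the discrete valuation ring $A_{n-1}:=O_{\mathfrak X_{n-1},\xi_{n-1}}$. By the reasoning of Assertion 1 applied to $\mathfrak X_{n-1}\to\Spec O_K$ (smooth with geometrically connected fibres), $A_{n-1}$ is a discrete valuation ring of residual characteristic $p$ and $\kappa(\xi_{n-1})$ is geometrically integral over $k$; a dimension count, using that the fibre of $\mathfrak X_n\to\mathfrak X_{n-1}$ over $\xi_{n-1}$ is geometrically integral over $\kappa(\xi_{n-1})$, identifies $\xi_n$ with the generic point of that fibre, so $O_{\mathfrak X_n,\xi_n}=O_{\mathfrak X_n',\xi_n}$ with $\mathfrak X_n':=\mathfrak X_n\times_{\mathfrak X_{n-1}}\Spec A_{n-1}$ and $\overline{\mathfrak X}_n':=\overline{\mathfrak X}_n\times_{\mathfrak X_{n-1}}\Spec A_{n-1}$ proper smooth over $A_{n-1}$ with normal crossing divisor $\mathfrak D_n\times_{\mathfrak X_{n-1}}\Spec A_{n-1}$. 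The restriction $Y_n':=Y_n\times_{X_n}X_n'$ to the generic fibre $X_n'$ of $X_n\to X_{n-1}$ corresponds to an open subgroup of $\pi_1(X_n',\ast)^{(p')}$ relative to the base field $K(X_{n-1})$ and residual characteristic $p$: restricting a prime-to-$p$ covering to a fibre of $X_n\times\Spec K^{\mathrm{sep}}\to X_{n-1}\times\Spec K^{\mathrm{sep}}$ gives a prime-to-$p$ covering, since the image of a profinite group in a pro-$p'$ group is pro-$p'$. This is the situation of Assertion 1 over $A_{n-1}$ except that the coefficient field of $Y_n'$ may exceed $K(X_{n-1})$; one removes this by passing to the henselization (the number of maximal ideals of the integral closure of $A_{n-1}$ in $K(Y_n')$ equals that of the integral closure of $A_{n-1}^{\mathrm h}$ in $K(Y_n')\otimes_{K(X_{n-1})}\mathrm{Frac}\,A_{n-1}^{\mathrm h}$), over which the constant-field extension becomes a \emph{local} discrete-valuation-ring extension $O_{K'}$ (using that $\kappa(\xi_n)$ is geometrically integral over $\kappa(\xi_{n-1})$, so residue fields of $O_{K'}$ stay linearly disjoint from $\kappa(\xi_n)$ over $\kappa(\xi_{n-1})$; cf.\,Lemma \ref{DVR}); Assertion 1 over $O_{K'}$ then finishes it.

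\emph{Main obstacle.} The crux is the regularity claim in the second paragraph: one needs that a prime-to-$p$ covering tamely ramified along a normal crossing divisor of a regular scheme is, \'etale-locally, a standard Kummer covering, and that adjoining roots of the boundary equations leaves the transverse special-fibre parameter $\pi$ (hence $\pi^{1/e}$) prime. This is exactly what makes $\mathfrak Y'_s$ regular, so that "connected" (from Stein factorization) becomes "irreducible"; without the base change to $O_K'$ the special fibre of the normalization is non-reduced and the argument collapses. Identifying $\xi_n$, checking that the prime-to-$p$ property passes to fibres, and peeling off the constant-field extension over a henselization in Assertion 2 are comparatively routine.
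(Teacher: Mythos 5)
Your overall route for assertion 1 (reduce to counting the points of the normalization over $\xi$, kill the ramification along the special fibre by the tame base change $O_{K'}=O_K[\pi^{1/e}]$, get connectedness of the special fibre of the normalization $\mathfrak{Y}'$ from Stein factorization, upgrade to irreducibility, then descend to $B$) is genuinely different from the paper, which instead quotes SGA1, Exp.\ XIII, Cor.\ 2.9 and 2.10: there the prime-to-$p$ covering is extended to a finite \'etale covering $\mathfrak{Z}$ of the whole model $\mathfrak{X}$, and irreducibility of $\mathfrak{Z}\times_{\Spec O_K}\Spec k$ follows from the isomorphism $\pi_1(\mathfrak{X}_k)^{p'}\cong\pi_1(\mathfrak{X})^{p'}$, so no analysis along $\mathfrak{D}$ is needed at all. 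Your plan could in principle be carried out, but the step you yourself single out as the crux is wrong as stated. A tame covering of a regular scheme branched along a normal crossing divisor is \emph{not} \'etale-locally a disjoint union of coverings obtained by adjoining roots of the individual branch equations; it is only \'etale-locally a quotient of (equivalently, dominated by) such a standard Kummer covering. Quotients are in general not regular: the degree-two covering $u^{2}=t_{1}t_{2}$ of a strictly henselian regular local ring with NC divisor $t_{1}t_{2}$ is tame, Galois and normal, but has an $A_1$ singularity at $u=t_{1}=t_{2}=0$, and its fibre over $V(\pi')$ is not regular either. So the assertion ``$\mathfrak{Y}'$ is regular and $\mathfrak{Y}'_s$ is regular'' is unjustified, and it is exactly what you use to pass from ``connected'' to ``irreducible''. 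Connectedness alone does not suffice: two components of $\mathfrak{Y}'_s$ could a priori meet over points of $\mathfrak{D}'$ in the special fibre, which is precisely where your local description breaks down. To repair the argument you would need the weaker but sufficient statement that $\mathfrak{Y}'_s$ is unibranch (indeed normal) at such boundary points, which again requires the true local structure theorem for tame coverings plus a reduction modulo the transverse parameter $\pi'$ --- or you can simply follow the paper and invoke the specialization isomorphism of prime-to-$p$ fundamental groups for the pair $(\overline{\mathfrak{X}},\mathfrak{D})$.

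Two smaller points. At the end of assertion 1, the decomposition $\Spec(B\otimes_A A')=\coprod_{\mathfrak P\mid\mathfrak m_A}\Spec(B_{\mathfrak P}\otimes_A A')$ is false ($B$ is a domain, so $\Spec(B\otimes_A A')$ is irreducible no matter how many maximal ideals $B$ has), and connectedness of $\Spec(B\otimes_A A')$ therefore proves nothing; the implication you want ($B'$ local $\Rightarrow$ $B$ local) is nevertheless immediate, since $B\subset B'$ is an integral extension and lying over puts a maximal ideal of $B'$ above every maximal ideal of $B$. Likewise, geometric integrality of $\overline{Y}'$ over $K'$ is more than you can assert (only separable closedness of $K'$ in $L\otimes_K K'$ comes for free from geometric connectedness of $Y$), but this is harmless: a purely inseparable constant field extension still gives a local Stein factor, so the special fibre stays connected. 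Your assertion 2 is structurally the same as the paper's (induct and apply assertion 1 over the discrete valuation ring $O_{\mathfrak{X}_{n-1},\xi_{n-1}}$), with a reasonable device (henselization plus Lemma \ref{DVR}) for the coefficient-field issue; it stands or falls with assertion 1.
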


\begin{proof}
To show the assertion 1, we may assume that $O_{K}$ is strictly henselian.
Write $G$ for the automorphism group of $Y$ over $X$.
Then $Y$ is a $G$-torsor over $X$.
Since the coefficient field of $Y$ is $K$, $G$ is a finite pro-$p'$ group.
By {\cite[Expos\'e XIII, Corollaire 2.9]{SGA1}}, there exists a finite Galois \'etale morphism $\mathfrak{Z} \rightarrow \mathfrak{X}$ such that the pull-back $\mathfrak{Z} \times_{\Spec O_{K}} \Spec K^{\mathrm{sep}}$ is isomorphic to $Y\times_{\Spec K} \Spec K^{\mathrm{sep}}$ over $X\times_{\Spec K}\Spec K^{\mathrm{sep}}$.
Therefore, it suffices to show that the normalization $\Spec O(\mathfrak{Z}, \xi)$ of the spectrum of the local ring $O_{\mathfrak{X},\xi}$ in $\mathfrak{Z}$ is the spectrum of a discrete valuation ring.
Let $\overline{t'}$ be a geometric point of $\mathfrak{X}_{k}$. 
The induced homomorphism $\pi_{1}(\mathfrak{X}_{k}, \overline{t'})^{p'} \rightarrow \pi_{1}(\mathfrak{X}, \overline{t'})^{p'}$ is an isomorphism again by {\cite[Expos\'e XIII, 2.10 and Corollaire 2.9]{SGA1}}.
Therefore, the scheme $\mathfrak{Z} \times_{\Spec O_{K}} \Spec k$ is irreducible, and hence the scheme $\Spec O(\mathfrak{Z}, \xi)$ is local.

Next, we show the assertion 2.
Let $Y_{i}$ (resp.\,$\mathfrak{Y}_{i}$) be the normalization of $X_{i}$ (resp.\,$\mathfrak{X}_{i}$) in $Y_{n}$.
Since the morphism $X_{n} \rightarrow X_{i}$ is smooth, the scheme $X_{n} \times_{X_{i}}Y_{i}$ is normal.
Since the morphism $X_{n} \to X_{i}$ is flat and generically geometrically connected, the morphism $Y_{i} \rightarrow X_{i}$ is finite \'etale for each $0 \leq i \leq n$.
We will show the assertion 2 by induction on $n$.
Note that the normalization $\Spec O(\mathfrak{Y}_{0}, \xi_{0})$ of the spectrum of the local ring $O_{\mathfrak{X}_{0},\xi_{0}} = \Spec O_{K}$ in $Y_{0}$ is the spectrum of a discrete valuation ring.
Therefore, we may assume that $Y_{0} = X_{0}$.

If $n=1$, the normalization $\Spec O(\mathfrak{Y}_{1}, \xi_{1})$ of the spectrum of the local ring $O_{\mathfrak{X}_{1},\xi_{1}}$ in $Y_{1}$ is the spectrum of a discrete valuation ring by the assertion 1.
Assume that the normalization $\Spec O(\mathfrak{Y}_{n-1}, \xi_{n-1})$ of the spectrum of the local ring $O_{\mathfrak{X}_{n-1},\xi_{n-1}}$ in $Y_{n-1}$ is the spectrum of a discrete valuation ring.
Write $K(Y_{n-1})$ for the field of fractions of the scheme $Y_{n-1}$.
By applying the assertion 1 to the pair $Y_{n} \times_{Y_{n-1}} \Spec K(Y_{n-1}) \rightarrow X_{n}\times_{X_{n-1}} \Spec K(Y_{n-1}) \rightarrow \Spec K(Y_{n-1})$ and $\mathfrak{X}_{n}\times_{\mathfrak{X}_{n-1}} \Spec O(\mathfrak{Y}_{n-1}, \xi_{n-1}) \rightarrow \Spec O(\mathfrak{Y}_{n-1}, \xi_{n-1})$, we can show that the scheme $\Spec O(\mathfrak{Y}_{n}, \xi_{n})$ is local.
\end{proof}

\begin{thm}
Let $\mathfrak{X} \rightarrow \Spec O_{K}$ be a morphism satisfying the following condition:
There exists a facotrization
$$\mathfrak{X} = \mathfrak{X}_{n} \rightarrow \ldots \rightarrow \mathfrak{X}_{0} = \Spec O_{K}$$
such that there exist a proper smooth morphism $\overline{\mathfrak{X}}_{i+1} \rightarrow \mathfrak{X}_{i}$ with geometrically connected fibers and a normal crossing divisor $\mathfrak{D}_{i+1} \subset \overline{\mathfrak{X}}_{i+1}$ of the scheme $\overline{\mathfrak{X}}_{i+1}$ relative to $\mathfrak{X}_{i}$ satisfying that the complement $\overline{\mathfrak{X}}_{i+1} \setminus \mathfrak{D}$ is isomorphic to $\mathfrak{X}_{i+1}$ for each $0 \leq i \leq n-1$.
Write $X$ for the scheme $\mathfrak{X} \times_{\mathrm{Spec}\,O_{K}}\mathrm{Spec}\,K$ and take a geometric point $\overline{t}$ of $X \times_{ \mathrm{Spec}\,K} \mathrm{Spec}\,K^{\mathrm{sep}}$ over its generic point.
Then the natural homomorphism
$$\alpha: \pi_{1}(X \times_{\mathrm{Spec}\,K} \mathrm{Spec}\,K^{\mathrm{sep}}, \overline{t})^{p'} \rightarrow \pi_{1}(\mathfrak{X} \times_{\mathrm{Spec}\,O_{K}} \mathrm{Spec}\,O_{K}^{\mathrm{sh}}, \overline{t})^{p'}$$
is an isomorphism.
\label{isom2}
\end{thm}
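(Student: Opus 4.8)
\noindent\emph{Proof plan.} The plan is to reduce first to the case $O_{K}=O_{K}^{\mathrm{sh}}$, and then to establish surjectivity and injectivity of $\alpha$ separately. Base changing the whole tower along $\Spec O_{K}^{\mathrm{sh}}\to\Spec O_{K}$ preserves all the hypotheses (proper smoothness with geometrically connected fibers, relative normal crossing divisors), one has $\mathrm{Frac}(O_{K}^{\mathrm{sh}})=K^{\mathrm{sh}}$ with $K^{\sep}$ a separable closure of $K^{\mathrm{sh}}$, and the strict henselization of $O_{K}^{\mathrm{sh}}$ is $O_{K}^{\mathrm{sh}}$ itself; so it suffices to treat the case where $O_{K}$ is strictly henselian, in which $\alpha$ becomes the natural map $\pi_{1}(X\times_{\Spec K}\Spec K^{\sep})^{p'}\to\pi_{1}(\mathfrak{X})^{p'}$. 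Since each $\mathfrak{X}_{i+1}=\overline{\mathfrak{X}}_{i+1}\setminus\mathfrak{D}_{i+1}$ is an open subscheme of a scheme smooth over $\mathfrak{X}_{i}$, the morphism $\mathfrak{X}\to\Spec O_{K}$ is smooth; in particular $\mathfrak{X}$ is regular and flat over $O_{K}$, and (by induction along the tower, using geometric connectedness of the fibers and the fact that removing a normal crossing divisor from a regular connected scheme keeps it connected) connected, hence integral, so $X=\mathfrak{X}\times_{O_{K}}\Spec K$ is a dense open subscheme of $\mathfrak{X}$.

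For surjectivity I would show that every connected finite \'etale covering $\mathfrak{Z}\to\mathfrak{X}$ of degree prime to $p$ remains connected after base change to $X\times_{\Spec K}\Spec K^{\sep}$. Such a $\mathfrak{Z}$ is regular, connected, hence integral and smooth over $O_{K}$, and its generic fiber $\mathfrak{Z}_{K}$, being a dense open of $\mathfrak{Z}$, is connected and finite \'etale over $X$. Let $K'$ be the field of constants of $\mathfrak{Z}_{K}$; it is finite separable over $K$ (as $X$ is smooth and geometrically connected over $K$), and, $O_{K}$ being strictly henselian, totally ramified over $K$. If $K'\neq K$, then $O_{K'}\supsetneq O_{K}$ inside the normal scheme $\mathfrak{Z}$ would force the special fiber $\mathfrak{Z}\times_{O_{K}}\Spec k$ to be non-reduced, contradicting smoothness of $\mathfrak{Z}$ over $O_{K}$. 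Hence $K'=K$, so $\mathfrak{Z}_{K}$ is geometrically connected and $\mathfrak{Z}\times_{O_{K}}\Spec K^{\sep}$ is connected; thus $\alpha$ is surjective.

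For injectivity I would show that every connected finite Galois covering $W\to X\times_{\Spec K}\Spec K^{\sep}$ of degree prime to $p$ is the base change along $\alpha$ of a finite \'etale covering of $\mathfrak{X}$. Such a $W$ descends to a finite \'etale Galois covering $Y\to X$ of degree prime to $p$ with field of constants some finite (hence, as $O_{K}$ is strictly henselian, totally ramified) subextension $K'/K$ of $K^{\sep}/K$, with $Y\times_{\Spec K}\Spec K^{\sep}\cong W$. The integral closure $O_{K'}$ is again a strictly henselian discrete valuation ring with residue field $k$, so base changing the tower along $O_{K}\to O_{K'}$ — which does not change $X\times_{\Spec K}\Spec K^{\sep}$ — we may assume $K'=K$, i.e. $Y\to X$ is a finite \'etale Galois covering of degree prime to $p$ with field of constants $K$. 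Then Lemma~\ref{irreducible}.2 applies: the normalization of $\Spec O_{\mathfrak{X},\xi}$ in $Y$ is the spectrum of a discrete valuation ring, and — as in the proof of that lemma, via {\cite[Expos\'e XIII, Corollaire 2.9]{SGA1}}, or by extending $Y$ over $\Spec O_{\mathfrak{X},\xi}$ and then over the regular scheme $\mathfrak{X}$ by purity of the branch locus — there is a finite \'etale Galois covering $\mathfrak{Z}\to\mathfrak{X}$ with $\mathfrak{Z}\times_{\Spec O_{K}}\Spec K^{\sep}\cong W$. Since such $W$ cofinally exhaust the open subgroups of $\pi_{1}(X\times_{\Spec K}\Spec K^{\sep})^{p'}$, this gives $\ker\alpha=\{1\}$.

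The genuinely hard content is already contained in Lemma~\ref{irreducible}: the d\'evissage along the length $n$ of the tower and the use of {\cite[Expos\'e XIII]{SGA1}} to ensure that a prime-to-$p$ covering of the generic fiber with the correct field of constants extends to an \emph{\'etale} covering of the total space, i.e. the control of ramification along the relative normal crossing divisors $\mathfrak{D}_{i}$. Within the present proof the delicate points are rather the two reduction steps — passing to a strictly henselian valuation ring, and then enlarging $K$ so as to absorb the field of constants of the given covering — which are exactly what make Lemma~\ref{irreducible} directly applicable; surjectivity, by contrast, is a soft consequence of smoothness over $O_{K}$.
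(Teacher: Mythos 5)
Your surjectivity argument is fine and close to the paper's, but the injectivity step has a genuine gap: you claim that once the field of constants of the Galois covering $Y \rightarrow X$ is $K$ (with $O_{K}$ strictly henselian), the covering extends to a finite \'etale covering of $\mathfrak{X}$ ``via purity of the branch locus'' after extending over $\Spec O_{\mathfrak{X},\xi}$. This is false: even with field of constants equal to $K$, the covering can be (tamely) ramified along the special fiber of $\mathfrak{X}$, which is a codimension-one locus, so Zariski--Nagata purity does not apply; and Lemma \ref{irreducible} only asserts that the normalization of $\Spec O_{\mathfrak{X},\xi}$ in $Y$ is the spectrum of a discrete valuation ring (i.e.\ irreducibility of the special fiber of the normalization), not that it is unramified over $O_{\mathfrak{X},\xi}$. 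A concrete counterexample to your claim already occurs for $n=1$ with $\mathfrak{X}=\mathbb{P}^{1}_{O_{K}}\setminus\{0,\infty\}$: for $l\neq p$ and $\pi$ a uniformizer, the $\mu_{l}$-covering $Y=\{y^{l}=\pi x\}\rightarrow X=\mathbb{G}_{m,K}$ is Galois of degree $l$, has field of constants $K$ (its function field is $K(y)$), corresponds to an open subgroup of $\pi_{1}(X,\overline{t})^{(p')}$, and yet is ramified of index $l$ at the Gauss valuation $\xi$, so it does not extend to an \'etale covering of $\mathfrak{X}$; only the other $K$-form $\{z^{l}=x\}$ of the same geometric covering does. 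Producing that good form is exactly the content you cannot get from purity alone, and it is what the paper's Lemma \ref{existence} supplies: one passes to the tame extension $K'$ of the coefficient field $K_{Y}$ of degree $e$ (the geometric degree), uses Abhyankar's lemma plus purity to see that the normalization of $\mathfrak{X}\times_{\Spec O_{K}}\Spec O_{K'}$ in $Y\times_{\Spec K_{Y}}\Spec K'$ is \'etale, and then uses Lemma \ref{DVR} together with the DVR statement of Lemma \ref{irreducible} to descend the maximal subextension unramified at $\xi$ to a covering $\mathfrak{X}'\rightarrow\mathfrak{X}$ that is \'etale over $X$ and over $\xi$, hence \'etale by purity, and satisfies $\mathfrak{X}'\times_{\Spec O_{K}}\Spec K'\cong Y\times_{\Spec K_{Y}}\Spec K'$.

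A secondary problem is your reduction ``we may assume $K'=K$ by base changing the tower along $O_{K}\rightarrow O_{K'}$'': this replaces the target of $\alpha$ by $\pi_{1}(\mathfrak{X}\times_{\Spec O_{K}}\Spec O_{K'},\overline{t})^{p'}$, and since $\alpha$ factors through this group, injectivity of the map into it does not imply injectivity of $\alpha$ into $\pi_{1}(\mathfrak{X}\times_{\Spec O_{K}}\Spec O_{K}^{\mathrm{sh}},\overline{t})^{p'}$; for the argument to close you must in the end exhibit \'etale coverings of $\mathfrak{X}$ itself (as the paper does by the descent step above), not merely of $\mathfrak{X}\times_{\Spec O_{K}}\Spec O_{K'}$. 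So the overall strategy (strict henselization, surjectivity via triviality of the coefficient field, injectivity via extending prime-to-$p$ coverings across the special fiber) matches the paper, but the decisive Abhyankar-plus-descent mechanism of Lemma \ref{existence} is missing and cannot be replaced by purity alone.
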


\begin{proof}
We may and do assume $K = K^{\mathrm{sh}}$.
Since the scheme $\mathfrak{X}$ is smooth over $\mathrm{Spec}\,O_{K}$, $\mathfrak{X}$ is normal.
Since the \'etale fundamental group of the scheme $\mathrm{Spec}\,O_{K} (= \mathrm{Spec}\,O_{K}^{\mathrm{sh}})$ is trivial, for any Galois \'etale covering space $\mathfrak{Y}\to \mathfrak{X}$, the coefficient field of $\mathfrak{Y}$ is equal to $K$.
Hence, the homomorphism
$$\pi_{1}(X \times_{\mathrm{Spec}\,K} \mathrm{Spec}\,K^{\mathrm{sep}}, \overline{t}) \rightarrow \pi_{1}(\mathfrak{X} \times_{\mathrm{Spec}\,O_{K}} \mathrm{Spec}\,O_{K}^{\mathrm{sh}}, \overline{t})$$
is surjective.
Therefore, $\alpha$ is also surjective.

We prove that the homomorphism $\alpha$ is injective. 
It suffices to show that each \'etale covering space of $X \times_{\mathrm{Spec}\,K} \mathrm{Spec}\,K^{\mathrm{sep}}$ corresponding to an open subgroup of $\pi_{1}(X \times_{\mathrm{Spec}\,K} \mathrm{Spec}\,K^{\mathrm{sep}}, \overline{t})^{p'}$ is isomorphic to the pull-back of an \'etale covering space of $\mathfrak{X}$, over $X \times_{\mathrm{Spec}\,K} \mathrm{Spec}\,K^{\mathrm{sep}}$.
Since each open subgroup of $\pi_{1}(X \times_{\mathrm{Spec}\,K} \mathrm{Spec}\,K^{\mathrm{sep}}, \overline{t})^{p'}$ includes the intersection of $\pi_{1}(X \times_{\mathrm{Spec}\,K} \mathrm{Spec}\,K^{\mathrm{sep}}, \overline{t})^{p'}$ and an open normal subgroup of the group $\pi_{1}(X,\overline{t})^{(p')}$,
Theorem \ref{isom2} follows from the next lemma.
\end{proof}

\begin{lem}
Let $X, \mathfrak{X},$ and $\overline{t}$ be as in Theorem \ref{isom2}.
Suppose that $O_{K}=O_{K}^{\mathrm{sh}}$.
Let $Y$ be a Galois \'etale covering space of $X$ corresponding to an open subgroup of $\pi_{1}(X,\overline{t})^{(p')}$.
Write $K_{Y} (\subset K^{\mathrm{sep}})$ for the coefficient field of $Y$ and $e$ for the extension degree of $Y\times_{\Spec K_{Y}} \Spec K^{\mathrm{sep}} \rightarrow X\times_{\Spec K} \Spec K^{\mathrm{sep}}$,  which is prime to $p$.
Let $K'$ be the tame extension of $K_{Y}$ of degree $e$ in $K^{\mathrm{sep}}$.
Then there exists a Galois \'etale covering space $\mathfrak{X}'$ of $\mathfrak{X}$ such that the scheme $Y \times_{\Spec K_{Y}}\Spec K'$ is isomorphic to the scheme $\mathfrak{X}' \times_{\Spec O_{K}}\Spec K'$ over $X \times_{\Spec K}\Spec K'$.

\[
\xymatrix{ 
 \ar[dd] \ar@{=}[r]
\mathfrak{X}' \times_{\Spec O_{K}}\Spec K'
& Y \times_{\Spec K_{Y}}\Spec K' \ar[d] \ar[r]
& X \times_{\Spec K}\Spec K' \ar[d] \ar[r] & \Spec K' \ar[d]\\
\ar[d]  
& Y \ar[r] \ar[rd]
& X\times_{\Spec K}\Spec K_{Y} \ar[d] \ar[r] & \Spec K_{Y} \ar[d]\\
\mathfrak{X}' \times_{\Spec O_{K}}\Spec K \ar[d] \ar[rr] 
& & X \ar[d] \ar[r] & \Spec K \ar[d]\\
 \mathfrak{X}'  \ar[rr]
& 
& \mathfrak{X} \ar[r] & \Spec O_{K}.
}
\]
\label{existence}
\end{lem}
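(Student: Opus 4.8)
The plan is to build $\mathfrak{X}'$ in two moves: first spread the cover out over the model after the base change to $K'$, and then observe that the resulting cover of $\mathfrak{X}\times_{O_K}O_{K'}$ descends to $\mathfrak{X}$ because its reduction lives on the special fibre, which is common to $\mathfrak{X}$ and to $\mathfrak{X}\times_{O_K}O_{K'}$. I would begin with the harmless preliminaries: $O_{K_Y}$ and $O_{K'}$ are again strictly henselian discrete valuation rings, totally ramified over $O_K$ and with the same residue field $k$; the scheme $\mathfrak{X}_{O_{K'}}:=\mathfrak{X}\times_{O_K}O_{K'}$ (and likewise $\mathfrak{X}_{O_{K_Y}}$) is again a tower of ``proper smooth minus relative normal crossing divisor'' fibrations, with generic fibre $X_{K'}:=X\times_K\Spec K'$ and special fibre $\mathfrak{X}_k$; and $Y_{K'}:=Y\times_{\Spec K_Y}\Spec K'$ is geometrically integral over $K'$ with $Y_{K'}\to X_{K'}$ a connected Galois finite \'etale cover of degree $e$, in particular corresponding to an open subgroup of $\pi_1(X_{K'})^{(p')}$. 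It then suffices to prove: (A) the normalisation $\mathfrak{X}''$ of $\mathfrak{X}_{O_{K'}}$ in the function field $K(Y_{K'})$ is finite \'etale over $\mathfrak{X}_{O_{K'}}$; and (B) every prime-to-$p$ finite \'etale cover of $\mathfrak{X}_{O_{K'}}$ is, compatibly, the base change of a finite \'etale cover of $\mathfrak{X}$.

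For (A): the morphism $\mathfrak{X}''\to\mathfrak{X}_{O_{K'}}$ is finite and $\mathfrak{X}_{O_{K'}}$ is regular, so by the Zariski--Nagata purity theorem it is enough to check \'etaleness in codimension one. Over the generic fibre $\mathfrak{X}''$ restricts to $Y_{K'}\to X_{K'}$, which is \'etale; and by Lemma \ref{irreducible}.2, applied over the strictly henselian $O_{K'}$, the special fibre of $\mathfrak{X}''$ is irreducible, so the only remaining codimension-one point $\xi$ of $\mathfrak{X}_{O_{K'}}$ lies over the generic point of $\mathfrak{X}_k$, and there $O(\mathfrak{X}'',\xi)$ is a discrete valuation ring. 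To see it is unramified over $O_{\mathfrak{X}_{O_{K'}},\xi}$ I factor $X_{K'}\to X_{K_Y}\to X$: over $\mathfrak{X}_{O_{K_Y}}$ the cover $Y\to X_{K_Y}$ is geometrically connected of degree $e$ prime to $p$, so by Lemma \ref{irreducible}.2 again its normalisation over the corresponding codimension-one point of $\mathfrak{X}_{O_{K_Y}}$ is a discrete valuation ring, tamely ramified of ramification index dividing $e$ and with separable residue extension; since $O_{K'}/O_{K_Y}$ is totally ramified of degree exactly $e$, the extension $O_{\mathfrak{X}_{O_{K'}},\xi}/O_{\mathfrak{X}_{O_{K_Y}},\xi}$ has ramification index $e$, a multiple of that index, and Abhyankar's lemma then forces the normalisation of $O_{\mathfrak{X}_{O_{K_Y}},\xi}$ in $K(Y)\cdot K(X_{K'})=K(Y_{K'})$ to be unramified over $O_{\mathfrak{X}_{O_{K'}},\xi}$. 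This gives (A); moreover $\mathfrak{X}''\to\mathfrak{X}_{O_{K'}}$ is Galois of group $\mathrm{Gal}(K(Y_{K'})/K(X_{K'}))$, as normalisation carries the Galois action of $Y_{K'}/X_{K'}$.

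For (B): because $\mathfrak{X}\to\Spec O_K$ is a tower of proper smooth minus relative normal crossing divisor fibrations, the prime-to-$p$ specialisation maps $\pi_1(\mathfrak{X}_k)^{p'}\to\pi_1(\mathfrak{X})^{p'}$ and $\pi_1(\mathfrak{X}_k)^{p'}\to\pi_1(\mathfrak{X}_{O_{K'}})^{p'}$ are isomorphisms (by induction along the tower from \cite{SGA1}, exactly as in the proof of Lemma \ref{irreducible}); since $\mathfrak{X}$ and $\mathfrak{X}_{O_{K'}}$ have the same special fibre $\mathfrak{X}_k$, combining them gives an isomorphism $\pi_1(\mathfrak{X}_{O_{K'}})^{p'}\xrightarrow{\ \sim\ }\pi_1(\mathfrak{X})^{p'}$ compatible with $\mathfrak{X}_{O_{K'}}\to\mathfrak{X}$. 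The Galois finite \'etale cover $\mathfrak{X}''\to\mathfrak{X}_{O_{K'}}$ has degree $e$ prime to $p$ and restricts to the prime-to-$p$ cover $Y\times_{\Spec K_Y}\Spec K^{\mathrm{sep}}\to X\times_K\Spec K^{\mathrm{sep}}$ on a geometric generic fibre, so it factors through $\pi_1(\mathfrak{X}_{O_{K'}})^{p'}$, i.e.\ corresponds to an open normal subgroup of it; transporting that subgroup through the isomorphism produces a Galois finite \'etale cover $\mathfrak{X}'\to\mathfrak{X}$ with $\mathfrak{X}'\times_{O_K}O_{K'}\cong\mathfrak{X}''$ over $\mathfrak{X}_{O_{K'}}$, whence $\mathfrak{X}'\times_{O_K}\Spec K'\cong\mathfrak{X}''\times_{O_{K'}}\Spec K'=Y\times_{\Spec K_Y}\Spec K'$ over $X\times_K\Spec K'$, as required.

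The hard part is step (A), and within it the verification that the cover becomes unramified at $\xi$ after exactly the prescribed base change. One has to separate the ``geometric'' ramification along the special fibre, which is tame of index dividing the geometric degree $e$ and is absorbed precisely because $K'/K_Y$ is tame of degree $e$, from the ``constant field'' ramification coming from $K_Y/K$, which may be wild but is harmlessly carried along once one works over $\mathfrak{X}_{O_{K_Y}}$ instead of $\mathfrak{X}$; making this precise is what forces the staging $X\to X_{K_Y}\to X_{K'}$ and the combined use of Lemma \ref{irreducible} and Abhyankar's lemma. Given (A), step (B) is purely formal, being the invariance of the prime-to-$p$ fundamental group under changing the strictly henselian base with fixed residue field.
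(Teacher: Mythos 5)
Your step (A) is fine and coincides with the paper's first move: Abhyankar's lemma plus Zariski--Nagata purity (with Lemma \ref{irreducible} guaranteeing that the only relevant codimension-one point over the special fibre carries a single discrete valuation ring) shows that the normalization $\mathfrak{X}''$ of $\mathfrak{X}\times_{\Spec O_{K}}\Spec O_{K'}$ in $K(Y\times_{\Spec K_{Y}}\Spec K')$ is \'etale. The gap is in step (B). There you assert that the prime-to-$p$ maps $\pi_{1}(\mathfrak{X}_{k})^{p'}\to\pi_{1}(\mathfrak{X})^{p'}$ and $\pi_{1}(\mathfrak{X}_{k})^{p'}\to\pi_{1}(\mathfrak{X}\times_{O_{K}}O_{K'})^{p'}$ are isomorphisms ``by induction along the tower, exactly as in the proof of Lemma \ref{irreducible}''. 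No such statement is proved in Lemma \ref{irreducible}, and its proof cannot be adapted in the way you suggest: the induction there replaces the intermediate base $\mathfrak{X}_{i}$ by the local ring at the generic point of its special fibre, so it only yields local, codimension-one information (irreducibility of the special fibre of a normalization), not the global assertion that every prime-to-$p$ finite \'etale cover of the special fibre of the whole tower extends to the total space -- which is exactly what the injectivity half of your claimed isomorphism amounts to. The single-level case is covered by the reference to SGA1, Expos\'e XIII used in Lemma \ref{irreducible}.1, but propagating it up the tower would require homotopy-exactness statements for pro-$p'$ quotients of the fibrations $\mathfrak{X}_{i}\to\mathfrak{X}_{i-1}$ (or an extension argument of the same depth as Theorem \ref{isom2} itself, which is proved \emph{after} and \emph{by means of} the present lemma, so one cannot appeal to it without circularity). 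As it stands, your descent from $\mathfrak{X}\times_{O_{K}}O_{K'}$ to $\mathfrak{X}$ rests on an unproven claim that carries essentially all of the difficulty.

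For comparison, the paper descends without any $\pi_{1}$-comparison across the base change: since by Lemma \ref{irreducible} the normalization of $\Spec O_{\mathfrak{X},\xi_{X}}$ in $K(Y\times_{\Spec K_{Y}}\Spec K')$ is a single discrete valuation ring, and the intermediate extension $O_{\mathfrak{X}\times_{O_{K}}O_{K'},\,\xi_{X,K'}}\supset O_{\mathfrak{X},\xi_{X}}$ is totally ramified and Galois while the top is unramified over it (your step (A)), Lemma \ref{DVR} applies: the maximal unramified subextension of $O_{\mathfrak{X},\xi_{X}}$ inside that DVR has Galois fraction field over $K(\mathfrak{X})$, and the total field is the compositum with the totally ramified part. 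Defining $\mathfrak{X}'$ as the normalization of $\mathfrak{X}$ in this unramified piece, one checks \'etaleness over $X$ and at $\xi_{X}$ and concludes by purity, and the tensor decomposition in Lemma \ref{DVR} gives $\mathfrak{X}'\times_{\Spec O_{K}}\Spec K'\cong Y\times_{\Spec K_{Y}}\Spec K'$ over $X\times_{\Spec K}\Spec K'$. If you want to keep your structure, replace step (B) by this local argument at $\xi_{X}$; the global prime-to-$p$ invariance you invoked is not needed and not available at this point.
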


\begin{proof}
Note that the field extension $K' \supset K$ is Galois, and hence the \'etale covering $Y \times_{\Spec K_{Y}}\Spec K' \rightarrow X$ is also Galois.
By Abhyankar's lemma and Zariski-Nagata purity, the normalization $\mathfrak{Y}'$ of $\mathfrak{X} \times_{\Spec O_{K}}\Spec O_{K'}$ in the field of fractions of $Y\times_{\Spec K_{Y}}\Spec K'$ is \'etale over $\mathfrak{X} \times_{\Spec O_{K}}\Spec O_{K'}$.
Let $\xi_{X}$ (resp.\,$\xi_{X,K'}$) be the generic point of the special fiber of $\mathfrak{X}$ (resp.\,$\mathfrak{X} \times_{\Spec O_{K}}\Spec O_{K'}$).
Then the extension $O_{\mathfrak{X} \times_{\Spec O_{K}}\Spec O_{K'},\,\xi_{X,K'}} \supset O_{\mathfrak{X},\,\xi_{X}}$ is totally ramified and Galois.
Note that the normalization $\Spec O(\mathfrak{Y}',\xi_{X})$ of the scheme $\Spec O_{\mathfrak{X},\xi_{X}}$ in the field of fractions of $Y\times_{\Spec K_{Y}}\Spec K'$ is the spectrum of a discrete valuation field by Lemma \ref{irreducible}.
By Lemma \ref{DVR}, the field of fractions of the maximal unramified extension field of $O_{\mathfrak{X},\xi_{X}}$ in $O(\mathfrak{Y}',\xi_{X})$ is Galois over the function field of $\mathfrak{X}$.
We write $\mathfrak{X}'$ for the normalization of $\mathfrak{X}$ in this field.
Then the morphism $\mathfrak{X}' \rightarrow \mathfrak{X}$ is \'etale over $X$ and $\xi_{X}$.
By Zariski-Nagata purity theorem, the morphism $\mathfrak{X}' \rightarrow \mathfrak{X}$ is \'etale.
\end{proof}

\end{document}